\newtheorem{theorem}{Theorem}[section]
\newtheorem{corollary}[theorem]{Corollary}
\newtheorem{definition}[theorem]{Definition}
\newtheorem{lemma}[theorem]{Lemma}
\newtheorem{proposition}[theorem]{Proposition}
\theoremstyle{remark}
\newtheorem{remark}[theorem]{Remark}
\numberwithin{equation}{section}
\newcommand{\ep}{\varepsilon}
\DeclareMathOperator*{\essinf}{ess\,inf}
\begin{document}

\title[Global attractors for Nonlocal Cahn-Hilliard equations]{Upper-semicontinuity of the global attractors for a class of nonlocal Cahn-Hilliard equations}

\author[Joseph L. Shomberg]{Joseph L. Shomberg}

\subjclass[2010]{35B36, 37L30, 45K05, 74N20}

\keywords{Nonlocal Cahn-Hilliard equations, global attractors, upper-semicontinuity}

\address{Department of Mathematics and Computer Science, Providence
College, Providence, RI 02918, USA, \\
\tt{{jshomber@providence.edu} }}

\date{\today}

\begin{abstract}
The aim of this work is to examine the upper-semicontinuity properties of the family of global attractors admitted by a non-isothermal viscous relaxation of some nonlocal Cahn-Hilliard equations.
We prove that the family of global attractors is upper-semicontinuous as the perturbation parameters vanish. 
Additionally, under suitable assumptions, we prove that the family of global attractors satisfies a further upper-semicontinuity type estimate whereby the difference between trajectories of the relaxation problem and the limit isothermal non-viscous problem is explicitly controlled, in the topology of the relaxation problem, in terms of the relaxation parameters.
\end{abstract}

\maketitle 

\tableofcontents

\section{Introduction}

Inside a bounded domain (container) $\Omega\subset\mathbb{R}^3,$ we consider a phase separation model for a binary solution (e.g. a cooling alloy),
\[
\phi_t = \nabla\cdot[\kappa(\phi)\nabla\mu],
\]
where $\phi$ is the {\em{order-parameter}} (the relative difference of the two phases), $\kappa$ is the {\em{mobility function}} (which we set $\kappa\equiv1$ throughout this article), and $\mu$ is the {\em{chemical potential}} (the first variation of the free-energy $E$ with respect to $\phi$).
In the classical model,
\[
\mu = -\Delta\phi + F'(\phi) \quad \text{and} \quad E(\phi) = \int_\Omega \left( \frac{1}{2}|\nabla\phi|^2 + F(\phi) \right) dx,
\]
where $F$ describes the density of potential energy in $\Omega$ (e.g. the double-well potential $F(s)=\frac{1}{4}(1-s^2)^2$).

Recently the nonlocal free-energy functional appears in the literature \cite{Giacomin-Lebowitz-97},
\[
E(\phi) = \int_\Omega\int_\Omega \frac{1}{4}J(x-y)(\phi(x)-\phi(y))^2 dxdy + \int_\Omega F(\phi) dx,
\]
hence, the {\em{chemical potential}} is, $\mu = a\phi - J*\phi + F'(\phi),$ where
\begin{align}
a(x) = \int_\Omega J(x-y) dy \quad \text{and} \quad (J*\phi)(x) = \int_\Omega J(x-y)\phi(y) dy.  \notag
\end{align}

In this article we consider the following problems: for $\alpha>0$, $\delta>0$, and $\ep>0$ the {\em{relaxation}} Problem P$_{\alpha,\ep}$ is, given $T>0$ and $(\phi_0,\theta_0)^{tr},$ find $(\phi^+,\theta^+)^{tr}$ satisfying
\begin{eqnarray}
\phi^{+}_t = \Delta\mu^{+} &\text{in}& \Omega\times(0,T)  \label{rel-1} \\ 
\mu^{+} = a\phi^{+} - J*\phi^{+} + F'(\phi^{+}) + \alpha \phi^{+}_t - \delta\theta^{+} &\text{in}& \Omega\times(0,T)  \label{rel-2} \\ 
\ep\theta_t^{+} - \Delta\theta^{+} = -\delta \phi_t^{+} &\text{in}& \Omega\times(0,T)  \label{rel-3} \\ 
\partial_n\mu^{+} = 0 &\text{on}& \Gamma\times(0,T)  \label{rel-4} \\ 
\partial_n\theta^{+} = 0 &\text{on}& \Gamma\times(0,T)  \label{rel-5} \\ 
\phi^{+}(x,0) = \phi_0(x) &\text{at}& \Omega\times\{0\}  \label{rel-6} \\ 
\theta^{+}(x,0) = \theta_0(x) &\text{at}& \Omega\times\{0\}. \label{rel-7}  
\end{eqnarray}
Formally setting $\alpha=0$ and $\ep=0$ in the above equations we obtain the {\em{limit}} Problem P$_{0,0}$: given $T>0$ and $\phi_0,$ find $\phi^0$ satisfying
\begin{eqnarray}
(1+\delta^2)\phi^0_t = \Delta\mu^0 &\text{in}& \Omega\times(0,T)  \label{lim-1} \\ 
\mu^0 = a\phi^0 - J*\phi^0 + F'(\phi^0)  &\text{in}& \Omega\times(0,T)  \label{lim-2} \\ 
\partial_n\mu^0 = 0 &\text{on}& \Gamma\times(0,T)  \label{lim-4} \\ 
\phi^0(x,0) = \phi_0(x) &\text{at}& \Omega\times\{0\}.  \label{lim-6}
\end{eqnarray}

The main focus of this article is to examine the stability of the asymptotic behavior, via global attractors, when we allow both $\alpha\rightarrow0^+$ and $\ep\rightarrow0^+$.
For ease of presentation, throughout we assume there is $\delta_0>0$ so that $\delta\in(0,\delta_0]$, and also $(\alpha,\ep)\in(0,1]\times(0,1].$

Let us now give some preliminary words on the motivation for using nonlocal diffusion.
First, in \cite[Equation (0.2)]{AVMRTM10} the nonlocal diffusion terms $a\phi-J*\phi$ appear as,
\[
\int_\Omega J(x-y)\left( \phi(x,t)-\phi(y,t) \right)dy,
\]
i.e. $a(x)=J*1.$
Heuristically, this integral term ``takes into account the individuals arriving at or leaving position $x$ from other places.''
In this setting, the term $a(x)\ge0$ is a factor of how many individuals arrive at position $x$.
Since the integration only takes place over $\Omega,$ individuals are not entering nor exiting the domain.
Hence, this representation is faithful to the desired mass conservation law we typically associate with Neumann boundary conditions. 
Although Neumann boundary conditions for the chemical potential $\mu$ make sense from the physical point of view of mass conservation, it is not necessarily true that the interface between the two phases is always orthogonal to the boundary, which is implied by the boundary condition $\partial_{n}\phi=0$ which commonly appears in the literature. 
This is partially alleviated by using nonlocal diffusion on $\phi.$

There is obvious motivation already in the literature to investigate Problem P$_{\alpha,\ep}$ from the point of view of a singular limit of a Caginalp type phase-field system (cf. \cite[Equations (1.1)-(1.3)]{Gal&Grasselli08}, \cite[Equations (1.1)-(1.3)]{GGM08-2} and \cite{Miranville&Zelik02}).
Of the non-isothermal, nonlocal Allen-Cahn system, 
\begin{equation}  \label{mot-1}
\left\{ \begin{array}{l} \alpha \phi_t + a\phi-J*\phi + F'(\phi) = \delta\theta \\ 
\ep_1\theta_t - \Delta\theta = -\delta\phi_t, \end{array} \right. 
\end{equation}
with $\alpha>0$, $\delta>0$, and $\ep_1>0,$ the singular limit $\ep_1\rightarrow 0^+$ formally recovers the following isothermal, viscous, nonlocal Cahn-Hilliard equation,\begin{equation}  \label{mot-1.1}
\phi_t - \Delta(a\phi-J*\phi+F'(\phi)+\alpha\phi_t) = 0. 
\end{equation}
Equation \eqref{mot-1.1} in the case where $F$ is a singular (logarithmic) potential was studied in \cite{Gal&Grasselli14}. 
We should also notice that when we iterate this procedure to an appropriate non-isothermal version of \eqref{mot-1.1}, the resulting system is equivalent to \eqref{mot-1.1}. 
Indeed, when we consider the system,
\begin{equation*}
\left\{ \begin{array}{l} \phi_t = \Delta\mu \\ 
\mu = a\phi - J*\phi + F'(\phi) + \alpha\phi_t - \delta\theta \\ 
\ep_2\theta_t -\Delta\theta = -\delta\phi_t, \end{array} \right. 
\end{equation*}
the formal limit $\ep_2\rightarrow0^+$ yields the isothermal, viscous, nonlocal Cahn-Hilliard equation,
\[
\varphi_t=\Delta(a\phi-J*\phi+F'(\phi)+\beta\varphi_t),
\]
where 
\[
\beta=\frac{\alpha}{1+\delta^2} \quad \text{and} \quad \varphi(t)=\phi((1+\delta^2)t).
\]

A full treatment of well-posedness and global attractors and their regularity already appears in the literature.
In particular, for Problem P$_{0,0}$ see \cite{Gal&Grasselli14} and for Problem P$_{\alpha,\ep}$ see \cite{Shomberg-n16}.
Our main goal is to determine in what sense Problem P$_{\alpha,\ep}$ might converge to Problem P$_{0,0}$.
Such convergence results may have begun with the hyperbolic relaxation of a Chaffee--Infante reaction diffusion equation in \cite{Hale&Raugel88}.
The motivation for hyperbolic relaxation is that it alleviates the parabolic problems from the sometimes unwanted property of ``infinite speed of propagation''.
Hale and Raugel proved in \cite{Hale&Raugel88} the existence of a family of global attractors that is upper-semicontinuous in the phase space. 
A global attractor is a unique compact invariant subset of the phase space that attracts all trajectories of the associated dynamical system, even at arbitrarily slow rates (cf. \cite{Kostin98} and \cite[Theorem 14.6]{Robinson01}).
In a sense which will become clearer in Section \ref{s:upper-sc}, upper-semicontinuity guarantees the attractors to not ``blow-up'' as the perturbation parameter vanishes; i.e.,
\[
\sup_{x\in A^\ep}\inf_{y\in A^0}\|x-y\|_{X^\ep}\longrightarrow 0 \quad\text{as}\quad \ep\rightarrow 0^+.
\]
A complete treatment of the upper-semicontinuity of the global attractors admitted by the semiflow for the corresponding {\em local} problem appears in \cite{Gal&Miranville09}.
In many respects, the present work aims to emulate the continuity result found there.

Unlike global attractors, exponential attractors (sometimes called inertial sets) are compact positively invariant sets possessing finite fractal dimension that attract bounded subsets of the phase space exponentially fast (cf. \cite{EFNT95}). 
It can readily be seen that when both a global attractor $\mathcal{A}$ and an exponential attractor $\mathcal{M}$ exist, then $\mathcal{A}\subset \mathcal{M}$ provided that the basin of attraction of $\mathcal{M}$ is the entire phase-space, and so the global attractor is also finite dimensional. 
In this article we do not turn our attention to proving the existence of exponential attractors, however, we will be interested in certain convergence properties that may be possessed by families of exponential attractors. 

Robust families of exponential attractors (that is, both upper- and lower-semicontinuous with explicit control over semidistances in terms of the perturbation parameter) of the type reported in \cite{GGMP05} have appeared in numerous applications, of which we will limit ourselves to here mention only a few of those applications to Cahn-Hilliard equations and phase-field equations.
Most similar to our Problem P$_{\alpha,\ep}$ (a non-isothermal viscous relaxation), Gal and Miranville show in \cite{Gal&Miranville09-2} the existence of a family of exponential attractors that is robust (at zero) with respect to $\delta$ and $\ep$ for any $\alpha>0$ {\em{fixed}}.
They also establish robustness for $\alpha$, $\delta$ and $\ep$ at $0.$
The global well-posedness for their model is detailed in \cite{Gal08-2}.
Robust exponential attractors for an isothermal nonviscous Cahn-Hilliard equation with singularly perturbed boundary conditions appears in \cite{Gal08-3}.
The works \cite{Conti-Mola-08} and \cite{GMPZ10} which contains some applications of memory relaxation of reaction diffusion equations: Cahn--Hilliard equations, phase-field equations, wave equations, beam equations, and numerous others. 
The novelty here being the presence of an exponentially fading ``memory'' term appearing with a singularly perturbed kernel which converges to the Dirac delta function as the perturbation parameter vanishes.
These works (\cite{Conti-Mola-08,GMPZ10}) are also focused on proving the existence of a robust family of exponential attractors.
The hyperbolic relaxation of the 3D Cahn-Hilliard equation, i.e. 
\[
\ep\phi_{tt}+\phi_t-\Delta(-\Delta\phi+F'(\phi)+\alpha\phi)=0,
\]
is discussed in \cite{GGMP05-CH3D} where it is shown that the problem admits a family of exponential attractors, robust at $\alpha=\ep=0$.
For the interested reader, an analysis the 1D counterpart appears in \cite{GGMP05-CH1D}.
Finally, we recall from the above discussion that the viscous Cahn-Hilliard equations appears as a singular limit of a Caginalp type phase-field system.
Relaxation problems of this type were also shown to possess robust exponential attractors.
Indeed, we refer to \cite{Miranville&Zelik02} and \cite{GGM08-2}, the latter being subject to physically relevant dynamic boundary conditions.

Our interest in robustness is due to the fact that it typically relies on an estimate of the form, 
\begin{equation}  \label{robust-intro}
\|S_\varepsilon(t)x-\mathcal{L}S_{0}(t)\Pi x\|_{X^\varepsilon}\le C\varepsilon^p,
\end{equation}
for all $t$ in some interval, where $x\in X^\varepsilon$, $S_\varepsilon(t):X^\ep\rightarrow X^\ep$ and $S_{0}(t):X^0\rightarrow X^0$ are semigroups generated by the solutions of the perturbed problem and the limit problem, respectively, $\Pi$ denotes a projection from $X^\varepsilon$ onto $X^0$ and $\mathcal{L}$ is a ``lift'' from $X^0$ into $X^\varepsilon$, and finally $C,p>0$ are constants.
In obtaining our (direct) upper-semicontinuity type result (appearing in Section \ref{s:upper-sc}), controlling a difference of this type in a suitable norm is crucial. 
The estimate (\ref{robust-intro}) means we can approximate the limit problem with the perturbation with control explicitly written in terms of the perturbation parameter. 
Usually such control is only exhibited on compact time intervals. 
For the model problems under consideration here, the right-hand side of the corresponding difference will be controlled in terms of the perturbation parameters $\alpha$ and $\ep$, and on compact time intervals, but at a cost of restricting the size of two other structural parameters.

In the next section we provide the functional framework behind Problem P$_{0,0}$ and Problem P$_{\alpha,\ep}$. 
Section \ref{s:models} is devoted to recalling several important aspects of Problem P$_{0,0}$ and Problem P$_{\alpha,\ep}$ such as (global) well-posedness, dissipation, and the existence of global attractors.
The upper-semicontinuity results appear in Section \ref{s:upper-sc}. 
The main points of this article are as follows:

\begin{itemize}

\item We prove the family of global attractors admitted by Problem P$_{\alpha,\ep}$ and Problem P$_{0,0}$ is upper-semicontinuous as the perturbation parameters $\alpha$, $\ep$ vanish.
Here we rely on the classical proof in \cite{Hale&Raugel88}.

\item Under an additional assumption relating the interaction kernel and the potential, we also show that the difference of trajectories of Problem P$_{\alpha,\ep}$ and Problem P$_{0,0}$ emanating from the same initial data, is {\em{explicitly}} controlled, in the topology of the perturbation problem, in terms of the perturbation parameters $\alpha$ and $\ep$ on compact time intervals $[0,T]$.

\end{itemize}

It seems that such results for {\em{nonlocal}} Cahn-Hilliard equations do not yet appear in the literature.
These results show that the perturbation Problem P$_{\alpha,\ep}$ may be viewed as a ``relaxation'' of the limit Problem P$_{0,0}$ in the sense that, for any Problem P$_{0,0}$, there is a Problem P$_{\alpha,\ep}$ that is {\em{close}} (made more precise in Section \ref{s:upper-sc}).

\section{Preliminaries}  \label{s:framework}

Now we detail some preliminaries that will be applied to both problems.
To begin, define the spaces $H:=L^2(\Omega)$ and $V:=H^1(\Omega)$ with norms denoted by, $\|\cdot\|$ and $\|\cdot\|_V$, respectively. 
Otherwise, we write the norm of the Banach space $X$ with $\|\cdot\|_X$.
The inner-product in $H$ is denoted by $(\cdot,\cdot)$.
Denote the dual space of $V$ by $V'$, and the dual paring in $V'\times V$ is denoted by $\langle\cdot,\cdot\rangle.$
For every $\psi\in V'$, we denote by $\langle \psi \rangle$ the average of $\psi$ over $\Omega$, that is, 
\[
\langle \psi \rangle := \frac{1}{|\Omega|}\langle\psi,1\rangle,
\]
where $|\Omega|$ is the Lebesgue measure of $\Omega.$
Throughout, we denote by $\hat\psi:=\psi-\langle\psi\rangle$ and for future reference, observe $\langle\hat\psi\rangle=\langle \psi-\langle\psi\rangle \rangle=0.$
We will refer to the following norm in $V'$, which is equivalent to the usual one,
\begin{align}
\|\psi\|^2_{V'} = \left\| A^{-1/2}_N(\psi - \langle \psi \rangle) \right\|^2 + \langle \psi \rangle^2.  \notag
\end{align}

Define the space $L^2_0(\Omega):=\{\phi\in L^2(\Omega):\langle \phi \rangle=0\}.$
Let $A_N=-\Delta:L^2_0(\Omega)\rightarrow L^2_0(\Omega)$ with domain $D(A_N)=\{\psi\in H^2(\Omega):\partial_n\psi=0 \ \mathrm{on} \ \Gamma\}$ denote the ``Neumann-Laplace'' operator.
Of course the operator $A_N$ generates a bounded analytic semigroup, denoted $e^{-A_Nt}$, and the operator is nonnegative and self-adjoint on $L^2(\Omega).$ 
Recall, the domain $D(A_N)$ is dense in $H^2(\Omega).$
Further, define $V_0:=\{\psi\in V:\langle \psi \rangle=0\}$, and $V_0':=\{\psi\in V':\langle \psi \rangle=0\}$.
Then $A_N:V\rightarrow V'$, $A_N\in\mathcal{L}(V,V')$, is defined by, for all $u,v\in V$,
\[
\langle A_N u,v \rangle = \int_\Omega \nabla u\cdot \nabla v dx.
\]
It is well known that the restriction $A_{N\mid V_0}$ maps $V_0$ to $V_0'$ isomorphically, and the inverse map $\mathcal{N}=A_N^{-1}:V_0'\rightarrow V_0,$ is defined by, for all $\psi\in V_0'$ and $f\in V_0$
\[
A_N\mathcal{N}\psi=\psi, \quad \mathcal{N}A_Nf=f.
\]
Additionally, these maps satisfy the relations, for all $u\in V_0$ and $v,w\in V_0',$
\begin{align}
\langle A_N u,\mathcal{N}v\rangle = \langle u,v \rangle,  \label{NLr-1} \\ 
\langle v,\mathcal{N}w \rangle = \langle w,\mathcal{N}v \rangle.  \notag
\end{align}
The Sobolev space $V$ is endowed with the norm,
\begin{equation}  \label{H1-norm}
\|\psi\|^2_{V}:= \|\nabla\psi\|^2 + \langle \psi \rangle^2.
\end{equation}
Denote by $\lambda_\Omega>0$ the constant in the Poincar\'{e}-Wirtinger inequality,
\begin{equation}  \label{Poincare}
\|\psi-\langle\psi\rangle\| \le \sqrt{\lambda_\Omega}\|\nabla\psi\|.
\end{equation}
Whence, for $c_\Omega:=\max\{\lambda_\Omega,1\}$, there holds, for all $\psi\in V,$
\begin{align}
\|\psi\|^2 & \le \lambda_\Omega\|\nabla\psi\|^2 + \langle\psi\rangle^2  \label{Poincare2} \\ 
& \le c_\Omega\|\psi\|^2_{V}.  \notag
\end{align}

For each $m\ge0$, $\alpha>0$, and $\ep>0$ define the following energy phase-space for Problem P$_{\alpha,\ep}$,
\begin{align}
\mathbb{H}^{\alpha,\ep}_m:=\{ \zeta=(\phi,\theta)^{tr} \in H\times H : |\langle \phi \rangle|, |\langle \theta \rangle| \le m \},  \notag 
\end{align}
which is Hilbert when endowed with the $\alpha,\ep$-dependent norm whose square is given by,
\begin{align}
\|\zeta\|^2_{\mathbb{H}^{\alpha,\ep}_m} & := \|\phi\|^2_{V'} + \alpha\|\phi\|^2 + \ep\|\theta\|^2.  \notag 
\end{align}
When we are concerned with the dynamical system associated with Problem P$_{\alpha,\ep}$, we will utilize the following metric space
\begin{align}
\mathcal{X}^{\alpha,\ep}_m := \left\{ \zeta=(\phi,\theta)^{tr}\in\mathbb{H}^{\alpha,\ep}_m : F(\phi)\in L^1(\Omega) \right\},  \notag
\end{align}
endowed with the metric
\begin{align}
d_{\mathcal{X}^{\alpha,\ep}_m}(\zeta_1,\zeta_2) := \|\zeta_1-\zeta_2\|_{\mathbb{H}^{\alpha,\ep}_m} + \left| \int_\Omega F(\phi_1)dx - \int_\Omega F(\phi_2)dx \right|^{1/2}.  \notag
\end{align}
We also define the more regular phase-space for Problem P$_{\alpha,\ep}$,
\begin{align}
\mathbb{V}^{\alpha,\ep}_m:=\{ \zeta=(\phi,\theta)^{tr} \in V\times V : |\langle \phi \rangle|, |\langle \theta \rangle| \le m \},  \notag 
\end{align}
with the norm whose square is given by, $\|\zeta\|^2_{\mathbb{V}^{\alpha,\ep}_m} := \|\phi\|^2 + \alpha\|\phi\|^2_V + \ep\|\theta\|^2_V.$
Naturally, for Problem P$_{0,0}$ we set $\mathbb{H}^{0,0}_m:=\{ \phi \in H : |\langle \phi \rangle| \le m \}$ with $\|\zeta\|_{\mathbb{H}^{0,0}_m} := \|\phi\|_{V'}.$
Also, $\mathbb{V}^{0,0}_m := \{ \phi \in V : |\langle \phi \rangle| \le m \}$ with $\|\zeta\|_{\mathbb{V}^{0,0}_m} := \|\phi\|.$

The following assumptions on $J$ and $F$ are based on \cite{Frigeri&Grasselli12,Gal&Grasselli14}:

\begin{description}

\item[(H1)] $J\in W^{1,1}(\mathbb{R}^3)$, $J(-x)=J(x)$, and $a(x):=\int_\Omega J(x-y) dy > 0$ a.e. in $\Omega$.

\item[(H2)] $F\in C^{2,1}_{loc}(\mathbb{R})$ and there exists $c_0>0$ such that, for all $s\in\mathbb{R},$
\begin{align}
F''(s) + \inf_{x\in\Omega}a(x) \ge c_0.  \notag
\end{align}

\item[(H3)] There exists $c_1>\frac{1}{2}\|J\|_{L^1(\mathbb{R}^3)}$ and $c_2\in\mathbb{R}$ such that, for all $s\in\mathbb{R},$ 
\begin{align}
F(s)\ge c_1s^2 - c_2.  \notag 
\end{align}

\item[(H4)] There exists $c_3>0$, $c_4\ge0,$ and $p\in(1,2]$ such that, for all $s\in\mathbb{R},$
\begin{align}
|F'(s)|^p \le c_3|F(s)| + c_4.  \notag 
\end{align}

\item[(H5)] There exist $c_5,c_6>0,$ and $q>0$ such that, for all $s\in\mathbb{R},$
\begin{align}
F''(s) + \inf_{x\in\Omega}a(x) \ge c_5|s|^{2q} - c_6.  \notag
\end{align}

\end{description}

Let us make some remarks and report some important consequences of these assumptions. 
From \cite[Remark 2]{CFG12}: assumption (H2) implies that the potential $F$ is a quadratic perturbation of a (strictly) convex function. 
Indeed, if we set $a^*:=\|a\|_{L^\infty(\Omega)}$, then $F$ can be represented as 
\begin{equation}  \label{convex}
F(s)=G(s)-\frac{a^*}{2}s^2,
\end{equation}
with $G\in C^2(\mathbb{R})$ being strictly convex, since $G''\ge c_0$.
With (H3), for each $m\ge0$ there are constants $c_7,c_{8},c_{9},c_{10}>0$ (with $c_{8}$ and $c_{9}$ depending on $m$ and $F$) such that,
\begin{align}
F(s)-c_7\le c_{8}(s-m)^2 + F'(s)(s-m),  \label{Fcons-1}
\end{align}
\begin{align}
\frac{1}{2}|F'(s)|(1+|s|) \le F'(s)(s-m) + c_{9},  \label{Fcons-2}
\end{align}
and
\begin{align}
|F(s)| - c_{10} \le |F'(s)|(1+|s|).  \label{Fcons-3}
\end{align}
The last inequality appears in \cite[page 8]{Gal&Miranville09}.
With the positivity condition (H3), it follows that, for all $s\in\mathbb{R},$
\begin{equation}  \label{Fcons-3.1}
|F'(s)| \le c_3|F(s)|+c_4.
\end{equation}

{\em{A word of notation:}} In many calculations, functional notation indicating dependence on the variable $t$ is dropped; for example, we will write $\psi$ in place of $\psi(t)$. 
Throughout the article, $C>0$ will denote a \emph{generic} constant, while $Q:\mathbb{R}_{+}^d\rightarrow \mathbb{R}_{+}$ will denote a \emph{generic} increasing function in each of the $d$ components. 
Unless explicitly stated, all of these generic terms will be independent of the parameters $\alpha,$ $\delta,$ $\ep$, $T,$ and $m$.
Constants due to the embeddings $V'\hookleftarrow H$, or $H\hookleftarrow V$, are denoted by $C_\Omega.$
Finally, throughout we will use the following abbreviations
\[
c_J:=\|J\|_{L^1(\Omega)} \quad \text{and} \quad d_J:=\|\nabla J\|_{L^1(\Omega)}.
\]

We now review Problem P$_{0,0}$.

\section{The model problems}  \label{s:models}

First we recall several results for Problem P$_{0,0}.$

\subsection{The limit Problem P$_{0,0}$}

When we examine the limit Problem P$_{0,0}$, observe that through the time rescaling $s\mapsto(1+\delta^2)t$, 
\[
\phi_s(s) = (1+\delta^2)\phi_t((1+\delta^2)t)=\partial_t[\phi((1+\delta^2)t)],
\]
we subsequently do not need to include the term $\delta$ appearing in \eqref{lim-1} in this preliminary discussion.
The term $\delta$ will need to appear later when we compare both Problem P$_{0,0}$ and Problem P$_{\alpha,\ep}$ on the same (compact) time interval.
All of the following results for Problem P$_{0,0}$ are namely due to \cite{CFG12,Frigeri&Grasselli12} and can be found in \cite[Section 2.1]{Gal&Grasselli14}.

\begin{definition}  \label{d:lim-ws}
For $T>0$ and $\phi_0\in H$ with $F(\phi_0)\in L^1(\Omega)$, we say that $\phi$ is a {\sc{weak solution}} of Problem P$_{0,0}$ on $[0,T]$ if $\phi$ satisfies 
\begin{align}
& \phi\in C([0,T];H) \cap L^2(0,T;V),  \notag \\
& \phi_t\in L^2(0,T;V'),  \notag \\ 
& \mu=a(x)\phi - J*\phi + F'(\phi) \in L^2(0,T;V).  \notag
\end{align}
In addition, upon setting, 
\begin{align}
\rho = \rho(x,\phi) := a(x)\phi + F'(\phi),  \notag
\end{align}
for every $\varphi\in V,$ there holds, for almost all $t\in(0,T),$
\begin{align}
\langle\phi_t,\varphi\rangle + (\nabla\rho,\nabla\varphi) - (\nabla J*\phi,\nabla\varphi) & = 0.  \notag
\end{align}
Also, there holds,
\begin{align}
\phi(0) = \phi_0. \label{initial-phi}
\end{align}
We say that $\phi$ is a {\sc{global weak solution}} of Problem P$_{0,0}$ if it is a weak solution on $[0,T]$, for any $T>0.$
The initial condition \eqref{initial-phi} holds in the $L^2$-sense; i.e., for every $\varphi\in V,$
\begin{align}
(\phi(0),\varphi)=(\phi_0,\varphi).  \label{L2-initial-phi}
\end{align}
\end{definition}

It is well-known that the average value of $\phi$ is conserved (cf. e.g. \cite[Section III.4.2]{Temam01}). 
Indeed, taking $\varphi=1$ in \eqref{ws-7} yields, $\frac{\partial}{\partial t}\int_\Omega \phi(x,t) dx = 0$ and we naturally recover the {\em{conservation of mass}}
\begin{align}
\langle \phi(t) \rangle & = \langle \phi_0 \rangle \quad \text{and} \quad \partial_t\langle \phi(t) \rangle = 0.  \label{con-mass}
\end{align}

\begin{theorem}  \label{t:lim-existence}
Assume (H1)-(H5) hold with $p\in(\frac{6}{5},2]$ and $q\ge\frac{1}{2}$. For any $\phi_0\in H$ with $F(\phi_0)\in L^1(\Omega)$, there exists a unique global weak solution $\phi$ to Problem P$_{0,0}$ in the sense of Definition \ref{d:lim-ws} satisfying the additional regularity, for any $T>0$, 
\begin{eqnarray}
\phi & \in & L^\infty(0,T;L^{2+2q}(\Omega)),  \notag \\
F(\phi) & \in & L^\infty(0,T;L^1(\Omega)).  \notag
\end{eqnarray}
Furthermore, setting 
\begin{align}
\mathcal{E}_0(t):=\frac{1}{4} \int_\Omega \int_\Omega J(x-y)\left( \phi(x,t)-\phi(y,t) \right)^2 dxdy + \int_\Omega F(\phi(x,t)) dx,  \label{lim-wk-1}
\end{align} 
the following energy equality holds, for all $\phi_0\in H$ with $F(\phi_0)\in L^1(\Omega)$, and $t\in[0,T],$ 
\begin{align}
& \mathcal{E}_0(t) + \int_0^t \|\nabla\mu(s)\|^2 ds = \mathcal{E}_0(0).  \label{lim-wk-2}
\end{align}
\end{theorem}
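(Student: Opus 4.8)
The plan is to establish Theorem \ref{t:lim-existence} by a Faedo--Galerkin approximation scheme, obtaining the required a priori estimates from the natural energy identity and then passing to the limit. First I would project the problem onto the finite-dimensional spaces spanned by the eigenfunctions of the Neumann--Laplace operator $A_N$. The spatial elliptic structure is standard, but one should be careful that the nonlocal term $J*\phi$ and the potential $F'(\phi)$ are handled correctly: the key point is that assumption (H2), in the form \eqref{convex}, shows that $\rho(x,\phi)=a(x)\phi+F'(\phi)$ arises from a strictly convex $G$ perturbed by a quadratic term, which gives the coercivity needed to control $\nabla\rho$ in $L^2$. On the approximate level, testing the equation with $\mu_n$ (the Galerkin chemical potential) produces the dissipation integral $\int_0^t\|\nabla\mu\|^2\,ds$, while testing with $\phi_n$ and using (H3) controls the $L^2$ norm; testing with $\partial_t\phi_n$ and using (H4)--(H5) upgrades regularity.

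Second, I would derive the uniform-in-$n$ bounds. The energy functional $\mathcal{E}_0$ defined in \eqref{lim-wk-1} is the right Lyapunov quantity: differentiating $\mathcal{E}_0$ along approximate solutions and using the weak formulation yields formally $\frac{d}{dt}\mathcal{E}_0=-\|\nabla\mu\|^2$, which integrates to the energy equality \eqref{lim-wk-2}. From (H3) one gets a lower bound $\mathcal{E}_0(t)\ge c_1\|\phi\|^2 - C$ after absorbing the nonlocal double integral, hence a uniform bound on $\phi$ in $L^\infty(0,T;H)$ and on $F(\phi)$ in $L^\infty(0,T;L^1(\Omega))$. Controlling $\phi$ in $L^2(0,T;V)$ and $\mu$ in $L^2(0,T;V)$ follows from the dissipation integral together with the coercivity of $\nabla\rho$. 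The higher integrability $\phi\in L^\infty(0,T;L^{2+2q}(\Omega))$ comes precisely from (H5): multiplying by $F'(\phi)$ (or exploiting the growth $F''(s)+\inf a\ge c_5|s|^{2q}-c_6$) produces an $L^{2+2q}$ bound once one has controlled $F(\phi)$ in $L^1$, using the consequence \eqref{Fcons-3}. For the time derivative, bounding $\phi_t$ in $L^2(0,T;V')$ follows directly from the weak equation $\langle\phi_t,\varphi\rangle=-(\nabla\mu,\nabla\varphi)$ together with the $\mu$ bound.

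Third, I would pass to the limit: the uniform bounds give weak and weak-$*$ convergence along a subsequence, and the Aubin--Lions--Simon compactness lemma (using $\phi_n$ bounded in $L^2(0,T;V)$ with $\partial_t\phi_n$ bounded in $L^2(0,T;V')$) yields strong convergence in $L^2(0,T;H)$ and a.e.\ convergence, which identifies the nonlinear terms $F'(\phi_n)\to F'(\phi)$ and $F(\phi_n)\to F(\phi)$. The continuity $\phi\in C([0,T];H)$ and the attainment of the initial datum in the sense \eqref{L2-initial-phi} follow from the standard interpolation $L^2(0,T;V)\cap H^1(0,T;V')\hookrightarrow C([0,T];H)$. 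Uniqueness would be shown by taking the difference of two solutions $\phi^1,\phi^2$ with the same data, testing with $\mathcal{N}(\phi^1-\phi^2)$ (valid since mass is conserved by \eqref{con-mass}, so the difference has zero average and lies in $V_0'$), and applying the relations \eqref{NLr-1}; the monotone part from the convex $G$ absorbs the bad sign, while the nonlocal term and the quadratic perturbation are estimated by Gronwall's inequality in the $V'$ norm.

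The main obstacle I expect is the rigorous justification of the energy equality \eqref{lim-wk-2} rather than merely an energy inequality. Passing to the limit in the dissipation integral only gives, by weak lower semicontinuity, the inequality $\mathcal{E}_0(t)+\int_0^t\|\nabla\mu\|^2\le\mathcal{E}_0(0)$; upgrading this to an equality requires enough regularity to legitimately use $\mu$ as a test function in the limit equation and to differentiate $\mathcal{E}_0$ in time. This is where the growth and integrability assumptions (H4)--(H5) with the sharpened range $p\in(\frac{6}{5},2]$ and $q\ge\frac12$ become essential: they guarantee $\mu=\rho-J*\phi\in L^2(0,T;V)$ and that $\frac{d}{dt}\int_\Omega F(\phi)\,dx=\langle F'(\phi),\phi_t\rangle$ holds rigorously via a chain-rule lemma (the pairing $\langle\phi_t, F'(\phi)\rangle$ must make sense and be integrable), which closes the equality. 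Since the statement attributes these results to \cite{CFG12,Frigeri&Grasselli12,Gal&Grasselli14}, I would cite the chain-rule and energy-identity arguments established there and focus the exposition on verifying that the present hypotheses (H1)--(H5) supply exactly the regularity those arguments demand.
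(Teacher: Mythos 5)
The paper does not prove this theorem itself: its ``proof'' is a one-line citation to \cite[Theorem 2.2]{Gal&Grasselli14}, which in turn rests on \cite{Frigeri&Grasselli12} and \cite{CFG12}. Your Faedo--Galerkin sketch is a faithful reconstruction of the argument carried out in those references (energy identity from testing with $\mu$, coercivity via (H3) since $c_1>\tfrac12\|J\|_{L^1}$ absorbs the nonlocal quadratic term, the $L^{2+2q}$ bound from (H5), Aubin--Lions compactness, and uniqueness by testing the difference with $\mathcal{N}(\phi^1-\phi^2)$), and your concluding decision to delegate the chain-rule/energy-equality step to the cited works is exactly what the paper does.
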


\begin{proof}
See \cite[Theorem 2.2]{Gal&Grasselli14}, which follows \cite[Corollary 1 and Proposition 5]{Frigeri&Grasselli12} and \cite[Theorem 1]{CFG12}. 
\end{proof}

At this point in the discussion we can formalize the semi-dynamical system generated by Problem P$_{0,0}$.

\begin{corollary}  \label{t:lim-Lip-semif}
Let the assumptions of Theorem \ref{t:lim-existence} be satisfied. 
We can define a strongly continuous semigroup (of solution operators) $S_{0,0}=(S_{0,0}(t))_{t\ge0}$, 
\begin{equation*}
S_{0,0}(t):\mathcal{X}^{0,0}_m\rightarrow \mathcal{X}^{0,0}_m
\end{equation*}
by setting, for all $t\geq 0,$ 
\begin{equation*}
S_{0,0}(t)\phi_0:=\phi(t)
\end{equation*}
where $\phi(t)$ is the unique global weak solution to Problem P$_{0,0}$.
\end{corollary}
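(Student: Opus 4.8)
The plan is to verify the three defining properties of a strongly continuous semigroup on the metric space $\mathcal{X}^{0,0}_m$: that each $S_{0,0}(t)$ is a well-defined map of $\mathcal{X}^{0,0}_m$ into itself, that the identities $S_{0,0}(0)=\mathrm{Id}$ and $S_{0,0}(t+s)=S_{0,0}(t)S_{0,0}(s)$ hold, and that $t\mapsto S_{0,0}(t)\phi_0$ is continuous in the metric $d_{\mathcal{X}^{0,0}_m}$. Every ingredient is supplied by Theorem~\ref{t:lim-existence}, so the corollary is genuinely a matter of assembling those facts; the only point requiring real care is the continuity of the potential-energy part of the metric.

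I would treat well-definedness first. For each $\phi_0\in\mathcal{X}^{0,0}_m$ Theorem~\ref{t:lim-existence} furnishes a \emph{unique} global weak solution $\phi$, so $S_{0,0}(t)\phi_0:=\phi(t)$ is single-valued. To check that the range lies in $\mathcal{X}^{0,0}_m$ I would invoke two facts: the conservation of mass \eqref{con-mass}, which gives $\langle\phi(t)\rangle=\langle\phi_0\rangle$ and hence $|\langle\phi(t)\rangle|\le m$ for all $t\ge0$; and the regularity $F(\phi)\in L^\infty(0,T;L^1(\Omega))$ from the same theorem, which gives $F(\phi(t))\in L^1(\Omega)$. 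Together these place $\phi(t)$ in $\mathcal{X}^{0,0}_m$. The semigroup identities then come from uniqueness together with the autonomy of Problem~P$_{0,0}$: the identity at $t=0$ is immediate from $\phi(0)=\phi_0$, while for the cocycle property, since the equations in Definition~\ref{d:lim-ws} contain no explicit time dependence, any time-translate of a weak solution is again a weak solution; restarting at time $s$ from the datum $\phi(s)=S_{0,0}(s)\phi_0$ therefore produces, by uniqueness, exactly the trajectory $\tau\mapsto\phi(s+\tau)$, and evaluating at $\tau=t$ yields $S_{0,0}(t)S_{0,0}(s)\phi_0=\phi(s+t)=S_{0,0}(t+s)\phi_0$.

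Strong continuity is where the work lies, because the metric on $\mathcal{X}^{0,0}_m$ controls both $\|\cdot\|_{V'}$ and $|\int_\Omega F(\phi_1)dx-\int_\Omega F(\phi_2)dx|^{1/2}$. Continuity of the first component is easy: $\phi\in C([0,T];H)$ and $H\hookrightarrow V'$, so $t\mapsto\phi(t)$ is continuous in $V'$. The hard part, and the main obstacle, is the continuity of $t\mapsto\int_\Omega F(\phi(t))dx$: since $F$ may grow superquadratically, $H$-continuity of $\phi$ does not by itself transfer to $L^1$-continuity of $F(\phi)$. I would resolve this through the energy equality \eqref{lim-wk-2}, which shows the total energy $\mathcal{E}_0(t)$ of \eqref{lim-wk-1} is continuous (indeed absolutely continuous and nonincreasing) in $t$. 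The nonlocal part of $\mathcal{E}_0$ is a bounded quadratic form in $\phi$: after expanding it equals $\tfrac12\int_\Omega a\phi^2\,dx-\tfrac12\int_\Omega(J*\phi)\phi\,dx$, which is Lipschitz on $H$-bounded sets by Young's inequality for the convolution ($\|J*\phi\|\le c_J\|\phi\|$), and is therefore continuous in $t$ along $\phi\in C([0,T];H)$. Subtracting the nonlocal part from $\mathcal{E}_0(t)$ leaves $t\mapsto\int_\Omega F(\phi(t))dx$ continuous, completing the verification.

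Finally, although the stated claim concerns only strong continuity in $t$, the continuity of each $S_{0,0}(t)$ in the initial datum—Lipschitz in the $V'$ topology on the time interval, accounting for the ``Lipschitz'' nature of the semiflow—follows from the difference estimate underlying uniqueness in the cited references, obtained by testing the equation satisfied by $\phi^{(1)}-\phi^{(2)}$ against $\mathcal{N}(\phi^{(1)}-\phi^{(2)})$ and applying Gronwall's lemma. I expect no difficulty in the algebraic parts of this last step; the genuine subtlety remains the energy-equality argument above, which is the mechanism that upgrades the weak-solution regularity into continuity in the full metric of $\mathcal{X}^{0,0}_m$.
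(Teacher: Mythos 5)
Your proposal is correct and follows the standard route: the paper states this corollary without proof, deferring to Theorem \ref{t:lim-existence} and the references \cite{Gal&Grasselli14,Frigeri&Grasselli12}, and your verification (uniqueness plus conservation of mass for well-definedness, autonomy plus uniqueness for the cocycle identity, and the energy equality \eqref{lim-wk-2} combined with the $H$-continuity of the quadratic nonlocal form to recover continuity of $t\mapsto\int_\Omega F(\phi(t))\,dx$) is exactly the argument those sources rely on. In particular, your observation that the potential-energy part of the metric is the only nontrivial point, and that it is handled by subtracting the $H$-continuous nonlocal part from the absolutely continuous total energy, is the right mechanism.
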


We now cite the result showing that Problem P$_{0,0}$ admits a global attractor.
The result is due to \cite[Theorem 4]{Frigeri&Grasselli12} (see also \cite[Theorem 2.7]{Gal&Grasselli14}).

\begin{theorem}  \label{t:lim-global}
The semigroup $S_{0,0}=(S_{0,0}(t))_{t\ge0}$ admits a global attractor $\mathcal{A}^{0,0}$ in $\mathbb{H}^{0,0}_m$. 
The global attractor is invariant under the semiflow $S_{0,0}$ (both positively and negatively) and attracts all nonempty bounded subsets of $\mathbb{H}^{0,0}_m$; precisely,

\begin{description}

\item[1] for each $t\geq 0$, $S_{0,0}(t)\mathcal{A}^{0,0} = \mathcal{A}^{0,0}$, and 

\item[2] for every nonempty bounded subset $B$ of $\mathbb{H}^{0,0}_m$,
\[
\lim_{t\rightarrow\infty}{\rm{dist}}_{\mathbb{H}^{0,0}_m}(S_{0,0}(t)B,\mathcal{A}^{0,0}) := \lim_{t\rightarrow\infty}\sup_{\zeta\in B}\inf_{\xi\in\mathcal{A}^{0,0}}\|S_{0,0}(t)\zeta-\xi\|_{\mathbb{H}^{0,0}_m} = 0.
\]

\end{description}

\noindent Additionally, 

\begin{description}

\item[3] the global attractor is the unique maximal compact invariant subset in $\mathbb{H}^{0,0}_m$ given by 
\[
\mathcal{A}^{0,0} := \omega(\mathcal{B}^{0,0}_0) := \bigcap_{s\geq 0}{\overline{\bigcup_{t\geq s}S_{0,0}(t)\mathcal{B}^{0,0}_0}}^{\mathbb{H}^{0,0}_m}.
\]

\end{description}

\noindent Furthermore, 

\begin{description}

\item[4] the global attractor is connected,

\item[5] the global attractor is bounded in $\mathbb{V}^{0,0}_m$, and 

\item[6] the fractal dimension of $\mathcal{A}^{0,0}$ is finite; i.e., 
\begin{equation*} 
{\rm{dim}}_F(\mathcal{A}^{0,0},\mathbb{H}^{0,0}_m):=\limsup_{r\rightarrow 0}\frac{\ln \mu_{\mathbb{H}^{0,0}_m}(\mathcal{A}^{0,0},r)}{-\ln r}<\infty,
\end{equation*}
where $\mu_{\mathbb{H}^{0,0}_m}(\mathcal{A}^{0,0},r)$ denotes the minimum number of balls of radius $r$ from $\mathbb{H}^{0,0}_m$ required to cover $\mathcal{A}^{0,0}$.

\end{description} 

\end{theorem}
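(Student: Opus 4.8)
The plan is to realize $\mathcal{A}^{0,0}$ as the $\omega$-limit set of a bounded absorbing set and to invoke the standard existence theory for global attractors (cf. \cite{Temam01,Robinson01}). Strong continuity of $S_{0,0}$ is already granted by Corollary \ref{t:lim-Lip-semif}, so the two genuine ingredients I must supply are (i) a bounded absorbing set in $\mathbb{H}^{0,0}_m$ and (ii) asymptotic compactness of $S_{0,0}$ in the $V'$-topology. For (i) I would begin from the energy equality \eqref{lim-wk-2}, which shows that $t\mapsto\mathcal{E}_0(t)$ is nonincreasing. Using the quadratic splitting \eqref{convex} together with the strict coercivity built into (H3)---precisely the gap $c_1>\frac{1}{2}\|J\|_{L^1(\mathbb{R}^3)}$, which dominates the possibly negative cross term $-\frac{1}{2}\int_\Omega\int_\Omega J(x-y)\phi(x)\phi(y)\,dxdy$ in the nonlocal energy---I would establish $\mathcal{E}_0(t)\ge\nu\|\phi(t)\|^2-C$ for some $\nu>0$. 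This already yields uniform boundedness of trajectories in $H$.

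To upgrade mere boundedness to an absorbing set I would derive a dissipative differential inequality. Testing the weak formulation with $\mathcal{N}\hat\phi=A_N^{-1}(\phi-\langle\phi\rangle)$, which is legitimate since the mean is conserved by \eqref{con-mass} and hence $\hat\phi\in V_0'$, and combining with the energy identity produces, after invoking (H2) and \eqref{Fcons-1}, an inequality of the form $\frac{d}{dt}\mathcal{E}_0+\eta\,\mathcal{E}_0\le C(1+m^2)$; a Gronwall argument then gives $\limsup_{t\to\infty}\mathcal{E}_0(t)\le R_0$, hence a bounded absorbing set $\mathcal{B}^{0,0}_0\subset\mathbb{H}^{0,0}_m$ on which $\int_\Omega F(\phi)\,dx$ is also controlled. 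For the asymptotic compactness (ii) I would establish a uniform-in-time bound of the absorbed trajectories in the more regular space $\mathbb{V}^{0,0}_m$. Here the nonlocal terms are benign: $\nabla(J*\phi)=(\nabla J)*\phi$ is controlled in $H$ by $d_J\|\phi\|$ via Young's inequality, while (H2) and (H5) allow me to invert the strictly monotone map $s\mapsto F'(s)+a(x)s$ and transfer control from $\nabla\mu$ to $\nabla\phi$. Since $V$ is compactly embedded in $V'$, a bounded absorbing set in $\mathbb{V}^{0,0}_m$ is precompact in the topology of $\mathbb{H}^{0,0}_m$, which furnishes both the asymptotic compactness and property 5.

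With an absorbing set and asymptotic compactness in hand, the abstract theorem supplies a global attractor $\mathcal{A}^{0,0}=\omega(\mathcal{B}^{0,0}_0)$ together with the invariance in property 1, the attraction property 2, and the characterization 3 as the maximal compact invariant set. Property 4 (connectedness) follows because the admissible constraint set $\{|\langle\phi\rangle|\le m\}$ is convex, so $\mathbb{H}^{0,0}_m$ is connected, and the attractor of a continuous semigroup on a connected phase space is connected; alternatively one takes $\mathcal{B}^{0,0}_0$ connected and uses continuity of $S_{0,0}(t)$.

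The hardest step is the finite fractal dimension in property 6. The usual volume-contraction route is awkward here because the operator $\phi\mapsto a\phi-J*\phi$ is bounded rather than smoothing, so the equation provides no instantaneous parabolic regularization of $\phi$ itself---all smoothing is funneled through $\Delta\mu$. I would therefore employ the smoothing-property method for dissipative systems (cf. \cite{EFNT95}): for two solutions $\phi_1,\phi_2$ with $\langle\phi_1\rangle=\langle\phi_2\rangle$, I would show that the difference $\psi=\phi_1-\phi_2$ obeys a Lipschitz estimate $\|\psi(t)\|_{\mathbb{H}^{0,0}_m}\le L(t)\|\psi(0)\|_{\mathbb{H}^{0,0}_m}$ together with a one-step smoothing estimate $\|\psi(t^*)\|_{\mathbb{V}^{0,0}_m}\le\Lambda\|\psi(0)\|_{\mathbb{H}^{0,0}_m}$ for some fixed $t^*>0$, the gain of regularity again coming from the $\Delta\mu$ dissipation and the strict monotonicity (H2). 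Because $\mathbb{V}^{0,0}_m$ is compactly embedded in $\mathbb{H}^{0,0}_m$, this discrete squeezing yields the finiteness of the fractal dimension of $\mathcal{A}^{0,0}$. The delicate point, and the main obstacle, is keeping every constant in the smoothing estimate independent of the initial data---depending only on the absorbing radius $R_0$ and on $m$---which forces careful use of \eqref{Fcons-1}--\eqref{Fcons-3} to absorb the superlinear contributions of $F'$.
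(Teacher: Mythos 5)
Your outline is sound, but it takes a much more self-contained route than the paper, whose ``proof'' of this theorem is essentially a list of citations: the statement itself is attributed to \cite[Theorem 4]{Frigeri&Grasselli12} and \cite[Theorem 2.7]{Gal&Grasselli14}, the first three claims are dispatched by asserting the existence of an absorbing set and the compactness of $S_{0,0}$ on $\mathbb{H}^{0,0}_m$, connectedness is obtained from the Kneser property following \cite[Section 5]{Ball04}, boundedness in $\mathbb{V}^{0,0}_m$ is \cite[Corollary 2.9]{Gal&Grasselli14}, and finite fractal dimension is inherited from the exponential attractor of \cite[Theorem 2.8]{Gal&Grasselli14}. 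You instead reconstruct the machinery: the coercivity of $\mathcal{E}_0$ from (H3) (correctly using the gap $c_1>\tfrac{1}{2}\|J\|_{L^1}$ to dominate the possibly sign-changing cross term, since $J$ is not assumed nonnegative), the dissipative inequality by testing with $\mathcal{N}\hat\phi$ (which mirrors the paper's Appendix argument for Problem P$_{\alpha,\ep}$), asymptotic compactness via an absorbing set in $\mathbb{V}^{0,0}_m$ and the compact embedding into $V'$, connectedness from convexity of $\mathbb{H}^{0,0}_m$ rather than the Kneser property (legitimate here since uniqueness holds and one may take a convex, hence connected, absorbing set), and finite dimension via a Lipschitz-plus-smoothing estimate rather than by invoking an exponential attractor --- though these two routes rest on the same squeezing mechanism. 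Your version buys transparency and independence from the cited regularity results at the cost of length; the paper buys brevity by leaning on results already proved for this exact system. One small caveat in your dimension argument: restricting to $\langle\phi_1\rangle=\langle\phi_2\rangle$ does not cover all pairs of points on $\mathcal{A}^{0,0}$, whose elements may have any mean in $[-m,m]$; since the mean is conserved, the standard remedy is to track the one-dimensional contribution $|\langle\phi_1\rangle-\langle\phi_2\rangle|$ separately (exactly as the paper's estimate \eqref{diff-0} does for the perturbed problem), which does not affect finiteness of the dimension.
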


\begin{proof}
The first three claims are a direct result of the existence of an absorbing set and the compactness of $S_{0,0}$ on $\mathbb{H}^{0,0}_m$. 
The fourth claim follows because Knesser's property is satisfied (cf. \cite[Section 5]{Ball04}).
The fifth claim is due to \cite[Corollary 2.9]{Gal&Grasselli14}.
Finally, the last claim follows due to the existence of an exponential attractor whose basis of attraction is the whole phase space (cf. \cite[Theorem 2.8]{Gal&Grasselli14}).
\end{proof}

The following result will be useful in Section \ref{s:upper-sc}.
This follows from \cite[Lemma 2.12]{Gal&Grasselli14}.

\begin{lemma}  \label{t:lim-reg-1}
Let the assumptions of Theorem \ref{t:lim-existence} be satisfied and assume $\phi$ is a weak solution to Problem P$_{0,0}$.
There exists a positive monotonically increasing function $Q(m)$ such that for all $t\in[0,T],$
\begin{align}
\int_0^t \|\phi_t(s)\|^2 ds \le Q(m)T.  \label{tip-0}
\end{align}
\end{lemma}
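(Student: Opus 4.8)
The plan is to upgrade the regularity $\phi_t\in L^2(0,T;V')$ guaranteed by Definition~\ref{d:lim-ws} to the $L^2(0,T;H)$ bound asserted by the lemma. The main device is to differentiate the equation in time and test the result against $\mathcal{N}\phi_t=A_N^{-1}\phi_t$. This is legitimate because conservation of mass \eqref{con-mass} gives $\langle\phi_t(t)\rangle=\partial_t\langle\phi(t)\rangle=0$, so $\phi_t(t)\in V_0'$, the element $\mathcal{N}\phi_t$ is well defined, and $\langle\phi_t,\mathcal{N}\phi_t\rangle=\|\phi_t\|_{V'}^2$. Since such a computation is only formal at the level of weak solutions, I would carry it out rigorously on a Galerkin (or difference-quotient) approximation and pass to the limit, the estimate below being uniform in the discretization parameter.

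Differentiating $\phi_t=\Delta\mu$ and $\mu=a\phi-J*\phi+F'(\phi)$ in time yields $\phi_{tt}=\Delta\mu_t$ with $\mu_t=(a+F''(\phi))\phi_t-J*\phi_t$. Pairing $\phi_{tt}=\Delta\mu_t$ with $\mathcal{N}\phi_t$ and invoking \eqref{NLr-1} gives
\[
\tfrac{1}{2}\tfrac{d}{dt}\|\phi_t\|_{V'}^2+\langle\mu_t,\phi_t\rangle=0,\qquad \langle\mu_t,\phi_t\rangle=\int_\Omega(a+F''(\phi))\phi_t^2\,dx-(J*\phi_t,\phi_t).
\]
By the coercivity hypothesis (H2) the first integral is bounded below by $c_0\|\phi_t\|^2$. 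For the convolution term I would exploit the regularity $J\in W^{1,1}(\mathbb{R}^3)$ rather than a crude application of Young's inequality (which would force the unavailable condition $c_0>c_J$): using the smoothing estimate $\|J*\phi_t\|_V\le(c_J+d_J)\|\phi_t\|$ together with the symmetry of $J$, one gets $(J*\phi_t,\phi_t)\le(c_J+d_J)\|\phi_t\|_{V'}\|\phi_t\|$, and a further application of Young's inequality absorbs half of the coercive term. This produces the differential inequality
\[
\tfrac{1}{2}\tfrac{d}{dt}\|\phi_t\|_{V'}^2+\tfrac{c_0}{2}\|\phi_t\|^2\le C\|\phi_t\|_{V'}^2,
\]
with $C$ depending only on $c_0$, $c_J$, $d_J$.

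To extract the bound I would first control the quantity on the right. Testing $\phi_t=\Delta\mu$ against $\mathcal{N}\phi_t$ (equivalently, reading off the energy equality \eqref{lim-wk-2} of Theorem~\ref{t:lim-existence}) gives $\|\phi_t\|_{V'}^2=\|\nabla\mu\|^2$, whence $\int_0^t\|\phi_t\|_{V'}^2\,ds=\int_0^t\|\nabla\mu\|^2\,ds\le\mathcal{E}_0(0)\le Q(m)$, the last inequality being the dissipative estimate furnishing the absorbing set of Theorem~\ref{t:lim-global}. Feeding this integral bound into the differential inequality, the uniform Gronwall lemma yields a pointwise bound $\|\phi_t(t)\|_{V'}^2\le Q(m)$ for $t$ bounded away from $0$; integrating the differential inequality over each unit subinterval $[\tau,\tau+1]\subset[0,T]$ then gives $\int_\tau^{\tau+1}\|\phi_t\|^2\,ds\le Q(m)$, and summing over the $O(T)$ such subintervals produces exactly the asserted accumulation $\int_0^t\|\phi_t\|^2\,ds\le Q(m)T$.

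The principal obstacle is twofold. First, making the time differentiation rigorous for a mere weak solution (with $\phi_0$ only in $H$) requires the approximation argument indicated above, and care near $t=0$, where one leans on the dissipative regularity rather than on the uncontrolled initial value $\|\phi_t(0)\|_{V'}$. Second, the convolution term $(J*\phi_t,\phi_t)$ is not sign-definite, since $J$ need not be nonnegative under (H1); the resolution is the $W^{1,1}$-smoothing estimate above, which trades a factor of $\|\phi_t\|$ for $\|\phi_t\|_{V'}$ and is therefore absorbed by the coercive term regardless of the relative sizes of $c_0$ and $c_J$.
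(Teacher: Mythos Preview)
The paper does not supply its own proof of this lemma; it simply cites \cite[Lemma~2.12]{Gal&Grasselli14}. Your strategy---differentiate in time, test against $\mathcal{N}\phi_t$, invoke the coercivity (H2), and use the $W^{1,1}$-smoothing of $J$ to trade one factor of $\|\phi_t\|$ for $\|\phi_t\|_{V'}$---is the standard route and is in all likelihood what the cited reference does. In particular, your handling of the convolution term is the right idea: the crude bound $|(J*\phi_t,\phi_t)|\le c_J\|\phi_t\|^2$ would indeed force the extraneous condition $c_0>c_J$ (i.e.\ (H6)), whereas the estimate $|(J*\phi_t,\phi_t)|\le(c_J+d_J)\,\|\phi_t\|_{V'}\|\phi_t\|$ sidesteps it and can be absorbed by Young's inequality.

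One point to sharpen: you write $\mathcal{E}_0(0)\le Q(m)$, invoking the absorbing set of Theorem~\ref{t:lim-global}. For an \emph{arbitrary} weak solution with data merely in $H$ and $F(\phi_0)\in L^1(\Omega)$, this is false---$\mathcal{E}_0(0)$ depends on $\|\phi_0\|$ and $\|F(\phi_0)\|_{L^1}$, not just on $m=|\langle\phi_0\rangle|$. The lemma as literally stated is therefore only valid once the trajectory has entered the absorbing set (or for data on the attractor), which is precisely how the estimate is applied in Section~\ref{s:upper-sc}, where $\|\phi_0\|_{\mathbb{V}^{0,0}_m}\le R$ is assumed. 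Your remark that one must ``lean on the dissipative regularity'' near $t=0$ is exactly this issue; make explicit that what your argument actually delivers is a bound of the form $Q(\mathcal{E}_0(0))\,T$ (equivalently $Q(R,m)\,T$ on the attractor), and that this is all that is needed downstream in Lemmas~\ref{t:du} and~\ref{t:rob-1}.
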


\subsection{The relaxation Problem P$_{\alpha,\ep}$}

Now we recall some important results for Problem P$_{\alpha,\ep}.$
The omitted proofs may be found in \cite{Shomberg-n16}; otherwise, as certain details in the next section rely on the proofs of some results stated here, we report, for the reader's convenience, those more important proofs in the appendix. 

\begin{definition}  \label{d:ws}
For $T>0$, $\delta_0>0$, $\delta\in(0,\delta_0]$, $(\alpha,\ep)\in(0,1]\times(0,1]$, and $\zeta_0=(\phi_0,\theta_0)^{tr}\in H\times H$ with $F(\phi_0)\in L^1(\Omega)$, we say that $\zeta=(\phi,\theta)^{tr}$ is a {\sc{weak solution}} of Problem P$_{\alpha,\ep}$ on $[0,T]$ if $\zeta=(\phi,\theta)^{tr}$ satisfies 
\begin{align}
& \phi\in C([0,T];H) \cap L^2(0,T;V),  \label{ws-1} \\
& \phi_t\in L^2(0,T;V'),  \label{ws-2} \\ 
& \sqrt{\alpha}\phi_t\in L^2(0,T;V),  \label{ws-2.1} \\ 
& \mu=a(x)\phi - J*\phi + F'(\phi) + \alpha\phi_t - \delta\theta \in L^2(0,T;V),  \label{ws-3} \\
& \theta\in C([0,T];H) \cap L^2(0,T;V),  \label{ws-4} \\
& \theta_t\in L^2(0,T;V').  \label{ws-5} 
\end{align}
In addition, upon setting, 
\begin{align}
\rho = \rho(x,\phi) := a(x)\phi + F'(\phi),  \label{ws-6}
\end{align}
for every $\varphi,\vartheta\in V,$ there holds, for almost all $t\in(0,T),$
\begin{align}
\langle\phi_t,\varphi\rangle + (\nabla\rho,\nabla\varphi) - (\nabla (J*\phi),\nabla\varphi) + \alpha(\nabla\phi_t,\nabla\varphi) & = \delta(\nabla\theta,\nabla\varphi)  \label{ws-7} \\ 
\langle\theta_t,\vartheta\rangle + (\nabla \theta,\nabla\vartheta) & = -\delta\langle\phi_t,\vartheta\rangle.  \label{ws-8}
\end{align}
Also, there holds,
\begin{align}
\phi(0) = \phi_0 \quad \text{and} \quad \theta(0) & = \theta_0.  \label{ws-9}
\end{align}
We say that $\zeta=(\phi,\theta)^{tr}$ is a {\sc{global weak solution}} of Problem P$_{\alpha,\ep}$ if it is a weak solution on $[0,T]$, for any $T>0.$
The initial conditions \eqref{ws-9} hold in the $L^2$-sense; i.e., for every $\vartheta\in V,$ \eqref{L2-initial-phi} and 
\begin{align}
(\theta(0),\vartheta)=(\theta_0,\vartheta)  \label{L2-initial-theta}
\end{align}
hold.
\end{definition}

In addition to \eqref{con-mass} and \eqref{con-mass}, taking $\vartheta=1$ in \eqref{ws-8} yields, $\frac{\partial}{\partial t}\int_\Omega \theta(x,t) dx = 0$ and we also establish 
\begin{align}
\langle \theta(t) \rangle & = \langle \theta_0 \rangle \quad \text{as well as} \quad \partial_t\langle \phi(t) \rangle = \partial_t\langle \theta(t) \rangle = 0.  \label{con-heat}
\end{align}
Together, \eqref{con-mass} and \eqref{con-heat} constitute {\em{conservation of enthalpy}}.

\begin{theorem}  \label{t:existence}
Assume (H1)-(H5) hold with $p\in(\frac{6}{5},2]$ and $q\ge\frac{1}{2}$. 
For any $\zeta_0=(\phi_0,\theta_0)^{tr}\in H\times H$ with $F(\phi_0)\in L^1(\Omega)$, there exists a global weak solution $\zeta=(\phi,\theta)^{tr}$ to Problem P$_{\alpha,\ep}$ in the sense of Definition \ref{d:ws} satisfying the additional regularity, for any $T>0$, 
\begin{eqnarray}
\phi & \in & L^\infty(0,T;L^{2+2q}(\Omega)),  \label{wk-0.001} \\
\sqrt{\alpha}\phi & \in & L^\infty(0,T;V),  \label{wk-0.01} \\
F(\phi) & \in & L^\infty(0,T;L^1(\Omega)),  \label{wk-0.2} \\ 
\theta_t & \in & L^2(0,T;H).  \label{wk-0.3}
\end{eqnarray}
Furthermore, setting 
\begin{align}
\mathcal{E}_\ep(t):=\frac{1}{4} \int_\Omega \int_\Omega J(x-y)\left( \phi(x,t)-\phi(y,t) \right)^2 dxdy + \int_\Omega F(\phi(x,t)) dx + \frac{\ep}{2}\int_\Omega \theta(t)^2 dx,  \label{wk-1}
\end{align} 
the following energy equality holds, for all $\zeta_0=(\phi_0,\theta_0)^{tr}\in \mathbb{H}^{\alpha,\ep}_m$ with $F(\phi_0)\in L^1(\Omega)$, and $t\in[0,T],$ 
\begin{align}
& \mathcal{E}_\ep(t) + \int_0^t \left( \|\nabla\mu(s)\|^2 + \alpha\|\phi_t(s)\|^2 + \|\nabla\theta(s)\|^2 \right) ds = \mathcal{E}_\ep(0).  \label{wk-2}
\end{align}
\end{theorem}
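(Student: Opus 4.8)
The plan is to construct a solution by a Faedo--Galerkin scheme, to extract uniform bounds from a discrete analogue of the energy identity \eqref{wk-2}, and then to pass to the limit, recovering the energy equality at the end from the regularity of the limit. Fix the orthonormal basis $\{w_j\}_{j\ge1}\subset D(A_N)$ of eigenfunctions of the Neumann--Laplace operator, $A_Nw_j=\lambda_j w_j$, put $V_n:=\mathrm{span}\{w_1,\dots,w_n\}$ with orthogonal projection $P_n$, and seek $\phi_n=\sum_{j\le n}a_j(t)w_j$, $\theta_n=\sum_{j\le n}b_j(t)w_j$ together with $\mu_n:=P_n\big(a\phi_n-J*\phi_n+F'(\phi_n)+\alpha\phi_{n,t}-\delta\theta_n\big)$ solving the projected versions of \eqref{ws-7}--\eqref{ws-8}, with discretized data $\phi_n(0),\theta_n(0)$ chosen so that $\phi_n(0)\to\phi_0$, $\theta_n(0)\to\theta_0$ in $H$ and $\int_\Omega F(\phi_n(0))\to\int_\Omega F(\phi_0)$. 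Because the operator acting on $\phi_{n,t}$ in \eqref{ws-7} is $I+\alpha A_N$, which on each $V_n$ has the positive eigenvalues $1+\alpha\lambda_j$ and is therefore boundedly invertible, the projected system is a genuine ODE system for $(a_j,b_j)$ in mass-matrix form; since $F\in C^{2,1}_{loc}$ renders the nonlinearity locally Lipschitz, Cauchy--Lipschitz gives a local solution on some $[0,T_n)$.

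Next I would derive the discrete energy identity. Testing the projected \eqref{ws-7} with $\mu_n\in V_n$ (using $\langle\phi_{n,t},\mu_n\rangle+\|\nabla\mu_n\|^2=0$ and $\phi_{n,t}\in V_n$, so that $(\mu_n,\phi_{n,t})=(a\phi_n-J*\phi_n+F'(\phi_n)+\alpha\phi_{n,t}-\delta\theta_n,\phi_{n,t})$ reproduces $\tfrac{d}{dt}\mathcal{E}_{\ep,n}$ exactly as in the formal computation), then testing the projected heat equation \eqref{rel-3}/\eqref{ws-8} with $\theta_n\in V_n$, and adding, the two $\delta$-cross terms cancel and one obtains
\[
\tfrac{d}{dt}\mathcal{E}_{\ep,n}+\|\nabla\mu_n\|^2+\alpha\|\phi_{n,t}\|^2+\|\nabla\theta_n\|^2=0 .
\]
Integrating and invoking (H3) bounds $\phi_n$ in $L^\infty(0,T;H)$ and $\int_\Omega F(\phi_n)$, which is \eqref{wk-0.2}; (H5) upgrades the latter to the bound \eqref{wk-0.001} in $L^\infty(0,T;L^{2+2q})$; the $\tfrac{\ep}{2}\|\theta_n\|^2$ term controls $\sqrt{\ep}\,\theta_n$ in $L^\infty(0,T;H)$, while the dissipation controls $\nabla\mu_n$, $\sqrt{\alpha}\,\phi_{n,t}$ and $\nabla\theta_n$ in $L^2(0,T;H)$. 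These bounds preclude blow-up of the coefficients, so $T_n=+\infty$. From the defining relation for $\mu_n$ and the coercivity $a+F''\ge c_0$ of (H2) I would then extract $\phi_n$ bounded in $L^2(0,T;V)$, and from the two equations $\phi_{n,t},\theta_{n,t}$ bounded in $L^2(0,T;V')$. The finer regularities require dedicated estimates: testing \eqref{rel-3}/\eqref{ws-8} with $\theta_{n,t}$ and absorbing $\delta(\phi_{n,t},\theta_{n,t})$ by Young's inequality together with $\sqrt{\alpha}\,\phi_{n,t}\in L^2(0,T;H)$ yields \eqref{wk-0.3}, while \eqref{wk-0.01} follows from a further energy-type estimate obtained by testing \eqref{ws-7} with $\phi_n$ and exploiting (H2), the remaining nonlocal and coupling terms being controlled through $d_J$, $c_\Omega$ and the already-established bounds.

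Finally I would pass to the limit. The bounds above and the Aubin--Lions--Simon lemma give, up to a subsequence, $\phi_n\to\phi$ and $\theta_n\to\theta$ strongly in $L^2(0,T;H)$ and a.e.\ in $\Omega\times(0,T)$, with weak/weak-$*$ limits in the listed spaces. The a.e.\ convergence handles the nonlinearity: by (H4) the family $F'(\phi_n)$ is bounded in a reflexive $L^{p}$-type space and equi-integrable, so $F'(\phi_n)\to F'(\phi)$ in $L^1(\Omega\times(0,T))$ by Vitali's theorem, which suffices to pass to the limit in $\rho$ and in $\mu$; the linear nonlocal terms $a\phi_n$ and $J*\phi_n$ pass by weak convergence, convolution against $J\in W^{1,1}$ being continuous. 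The limit $(\phi,\theta)^{tr}$ then satisfies \eqref{ws-7}--\eqref{ws-8}, and \eqref{ws-9} is recovered in the $L^2$-sense using $\phi,\theta\in C([0,T];H)$. To promote the lower-semicontinuity inequality (obtained by passing $\mathcal{E}_{\ep,n}$ to the limit) to the equality \eqref{wk-2}, I would test the limit equations directly with $\mu$ and $\theta$---legitimate since $\mu\in L^2(0,T;V)$ and $\phi_t\in L^2(0,T;V')$---and justify $\tfrac{d}{dt}\int_\Omega F(\phi)=\langle\phi_t,F'(\phi)\rangle$ via the decomposition \eqref{convex} and the standard chain-rule lemma for the $V'$-$V$ pairing.

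The main obstacle is twofold, and both parts concern the superquadratic nonlinearity $F'$. First, passing $F'(\phi_n)$ to the limit with only the growth control of (H4)--(H5), rather than an $L^\infty$ bound on $\phi$, forces the equi-integrability argument in place of dominated convergence. Second, and more delicate, is upgrading the energy inequality to the equality \eqref{wk-2}: because $F$ is merely $C^{2,1}_{loc}$ with superquadratic growth, justifying the time chain rule for $\int_\Omega F(\phi)$ at the regularity $\phi_t\in L^2(0,T;V')$, $F'(\phi)\in L^2(0,T;V)$ is the crux, and is precisely where the convex splitting $F=G-\tfrac{a^*}{2}s^2$ and the structural hypotheses (H2)--(H5) are indispensable. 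The same growth difficulties, compounded by the low regularity $\phi_0,\theta_0\in H$ of the data, are what make the higher-order estimates \eqref{wk-0.01} and \eqref{wk-0.3} the most technical part of the construction.
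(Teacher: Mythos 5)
The paper itself does not prove Theorem \ref{t:existence}; it defers to \cite{Shomberg-n16} and only signals, in the first line of Appendix \ref{s:append}, that the construction rests on a Faedo--Galerkin approximation. Your proposal is exactly that scheme, and its core --- the invertibility of $I+\alpha A_N$ on each $V_n$, the discrete energy identity with cancellation of the two $\delta$-cross terms, the a priori bounds drawn from (H2)--(H5), Aubin--Lions compactness, Vitali's theorem for $F'(\phi_n)$ under (H4), and the recovery of the equality \eqref{wk-2} by testing the limit equations together with the convex splitting \eqref{convex} --- is the standard and correct route to the existence claim, the energy identity, and the regularities \eqref{wk-0.001} and \eqref{wk-0.2}.

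There is, however, a concrete gap in your treatment of the two auxiliary regularities. Testing the projected \eqref{ws-7} with $\phi_n$ produces the term $\tfrac{\alpha}{2}\tfrac{d}{dt}\|\nabla\phi_n\|^2$, and testing the projected \eqref{ws-8} with $\theta_{n,t}$ produces $\tfrac{1}{2}\tfrac{d}{dt}\|\nabla\theta_n\|^2$; integrating these from $t=0$ requires uniform-in-$n$ control of $\|\nabla\phi_n(0)\|$ and $\|\nabla\theta_n(0)\|$, i.e.\ $V$-regularity of the initial data, whereas the theorem assumes only $\phi_0,\theta_0\in H$ (and the $\phi$-equation is pseudo-parabolic, $(I+\alpha A_N)\phi_t=-A_N(a\phi-J*\phi+F'(\phi)-\delta\theta)$, so there is no instantaneous parabolic smoothing to fall back on for $\phi$). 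As written, your Gr\"onwall arguments for \eqref{wk-0.01} and \eqref{wk-0.3} therefore do not close. One must either work on $[\tau,T]$ with $\tau>0$ after using $\phi,\theta\in L^2(0,T;V)$ to select a good starting time, insert a time weight in the differential inequality, or strengthen the hypotheses on the data; you correctly identify these two claims as the most technical part of the construction, but the sketch passes over precisely the point where the argument would fail.
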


The following proposition establishes the uniqueness of weak solutions to Problem P$_{\alpha,\ep}$. 
Furthermore, it shows that the semigroup $S_{\alpha,\ep}$ (defined below) is strongly continuous with respect to the metric $\mathcal{X}^{\alpha,\ep}_m$.

\begin{proposition}  \label{t:cont-dep}
Assume (H1)-(H4) hold. 
Let $T>0$, $m\ge0$, $\delta_0>0$, $\delta\in(0,\delta_0]$, $(\alpha,\ep)\in(0,1]\times(0,1]$, and $\zeta_{01}=(\phi_{01},\theta_{01})^{tr}$, $\zeta_{02}=(\phi_{02},\theta_{02})^{tr}\in\mathbb{H}^{\alpha,\ep}_m$ be such that $F(\phi_{01}),F(\phi_{02})\in L^1(\Omega)$.
Let $\zeta_1(t)=(\phi_1(t),\theta_1(t))$ and $\zeta_2(t)=(\phi_2(t),\theta_2(t))$ denote the weak solution to Problem P$_{\alpha,\ep}$ corresponding to the data $\zeta_{01}$ and $\zeta_{02}$, respectively.
Then there are positive constants $\bar\nu_1=\bar\nu_1(c_0,J,\alpha,\ep,\delta_0)\sim\{\alpha^{-2},\ep^{-1}\}$ and $\bar\nu_2=\bar\nu_2(F,J,\Omega,\delta_0)$, independent of $T$, $\zeta_{01}$, and $\zeta_{02}$, such that, for all $t\in[0,T],$ 
\begin{align}
& \|\zeta_{1}(t)-\zeta_{2}(t)\|^2_{\mathbb{H}^{\alpha,\ep}_m} + \int_0^t \left( 2\|\partial_t\phi_1(s)-\partial_t\phi_2(s)\|^2_{V'} + \alpha\|\partial_t\phi_1(s)-\partial_t\phi_2(s)\|^2 + 2\|\theta_1(s)-\theta_2(s)\|^2_{V} \right) ds  \notag \\ 
& \le e^{\bar\nu_1 t} \left( \|\zeta_1(0)-\zeta_2(0)\|^2_{\mathbb{H}^{\alpha,\ep}_m} + \frac{2\bar\nu_2}{\bar\nu_1} \left( |M_1-M_2|+|N_1-N_2| \right)^2  \right)  \label{diff-0}
\end{align}
where $M_i:=\langle\phi_i(0)\rangle$, $N_i:=\langle\theta_i(0)\rangle$, $i=1,2$.
\end{proposition}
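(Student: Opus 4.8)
The plan is to run a difference-of-solutions energy estimate and close with Gronwall's inequality. Set $\phi:=\phi_1-\phi_2$, $\theta:=\theta_1-\theta_2$, and note $\rho_1-\rho_2=a\phi+F'(\phi_1)-F'(\phi_2)$. Subtracting \eqref{ws-7} and \eqref{ws-8} for the two solutions gives, for every $\varphi,\vartheta\in V$ and a.e.\ $t$,
\begin{align}
\langle\phi_t,\varphi\rangle+(\nabla(\rho_1-\rho_2),\nabla\varphi)-(\nabla(J*\phi),\nabla\varphi)+\alpha(\nabla\phi_t,\nabla\varphi)&=\delta(\nabla\theta,\nabla\varphi),\notag\\
\langle\theta_t,\vartheta\rangle+(\nabla\theta,\nabla\vartheta)&=-\delta\langle\phi_t,\vartheta\rangle.\notag
\end{align}
Applying \eqref{con-mass} and \eqref{con-heat} to each solution shows that $\langle\phi(t)\rangle\equiv M_1-M_2$ and $\langle\theta(t)\rangle\equiv N_1-N_2$ are constant in $t$, so $\langle\phi_t\rangle=\langle\theta_t\rangle=0$; in particular $\phi_t$ and $\hat\phi=\phi-\langle\phi\rangle$ belong to $V_0'$ and the inverse Neumann--Laplacian $\mathcal{N}=A_N^{-1}$ may be applied to them.

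I would first pair the $\phi$-difference equation with $\varphi=\mathcal{N}\hat\phi$. Using \eqref{NLr-1}, the relation $A_N\mathcal{N}=\mathrm{id}$ on $V_0'$, and the time-independence of $\langle\phi\rangle$, the two derivative contributions collapse to $\tfrac12\tfrac{d}{dt}(\|\phi\|^2_{V'}+\alpha\|\phi\|^2)$, reproducing the $\phi$-part of the $\mathbb{H}^{\alpha,\ep}_m$-norm, while the elliptic term reduces to the $L^2$-pairing $(\rho_1-\rho_2,\hat\phi)$. Here the convex splitting \eqref{convex}, i.e.\ $F'=G'-a^*\,\mathrm{id}$ with $G''\ge c_0$, is essential: the genuinely nonlinear part contributes the sign-definite quantity $(G'(\phi_1)-G'(\phi_2),\phi)\ge c_0\|\phi\|^2$, and the remaining $a$- and $a^*$-terms are linear and absorbed by Young's inequality through $c_J$. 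Pairing the $\theta$-difference equation with $\vartheta=\theta$ supplies $\tfrac{\ep}{2}\tfrac{d}{dt}\|\theta\|^2+\|\nabla\theta\|^2$, i.e.\ the $\theta$-part of the norm together with the $\theta$-dissipation (the gap between $\|\nabla\theta\|^2$ and $\|\theta\|^2_V$ being the constant $\langle\theta\rangle^2$). To produce the $\|\phi_t\|^2_{V'}+\alpha\|\phi_t\|^2$ dissipation appearing on the left of \eqref{diff-0} I would additionally pair the $\phi$-equation with $\mathcal{N}\phi_t$, which is legitimate since $\sqrt{\alpha}\phi_t\in L^2(0,T;V)$ and which yields exactly $\|\phi_t\|^2_{V'}+\alpha\|\phi_t\|^2$ from \eqref{NLr-1}.

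The coupling terms $\delta(\hat\theta,\hat\phi)$, $\delta(\hat\theta,\phi_t)$ and $-\delta\langle\phi_t,\hat\theta\rangle$, together with the nonlocal terms $(J*\phi,\hat\phi)$ and $(J*\phi,\phi_t)$, are all estimated by Young's inequality, distributing them among the available dissipation $\alpha\|\phi_t\|^2$, $\|\nabla\theta\|^2$ and the $\ep$-weighted norm $\ep\|\theta\|^2$. This is the step that forces the exponential rate to scale like $\bar\nu_1\sim\{\alpha^{-2},\ep^{-1}\}$, since one pays a factor $\alpha^{-1}$ to feed $\|\phi_t\|$ into the viscous dissipation (and a further $\alpha^{-1}$ to rewrite the resulting $\|\phi\|^2$ as $\alpha^{-1}\cdot\alpha\|\phi\|^2$) and a factor $\ep^{-1}$ to feed $\|\theta\|$ into the $\ep$-weighted norm, with the $c_0,c_J$-dependence coming from the nonlocal contributions. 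The averages enter only through the pairing against $\hat\phi=\phi-\langle\phi\rangle$: the monotone term leaves the remainder $-\langle\phi\rangle\int_\Omega(G'(\phi_1)-G'(\phi_2))\,dx$ and $\langle\phi\rangle(a,\hat\phi)$, which I would bound using the structural inequalities \eqref{Fcons-1}--\eqref{Fcons-3} and the uniform regularity $F(\phi_i)\in L^\infty(0,T;L^1(\Omega))$, $\phi_i\in L^\infty(0,T;L^{2+2q}(\Omega))$ of Theorem \ref{t:existence}; after a Young splitting that absorbs the $\|\phi\|^2$ factor into $c_0\|\phi\|^2$, what survives is an inhomogeneity controlled by $\bar\nu_2(|M_1-M_2|+|N_1-N_2|)^2$ with $\bar\nu_2$ depending only on $F,J,\Omega,\delta_0$.

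Collecting these contributions yields a differential inequality of the form
\[
\frac{d}{dt}\|\zeta_1-\zeta_2\|^2_{\mathbb{H}^{\alpha,\ep}_m}+2\|\phi_t\|^2_{V'}+\alpha\|\phi_t\|^2+2\|\theta\|^2_V\le\bar\nu_1\|\zeta_1-\zeta_2\|^2_{\mathbb{H}^{\alpha,\ep}_m}+2\bar\nu_2\left(|M_1-M_2|+|N_1-N_2|\right)^2,
\]
after a suitable normalization of the multipliers, and the integrated Gronwall lemma then gives \eqref{diff-0}; uniqueness of weak solutions and the asserted strong continuity of $S_{\alpha,\ep}$ follow by taking $\zeta_{01}=\zeta_{02}$. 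I expect the main obstacle to be the nonlinear pairing $(F'(\phi_1)-F'(\phi_2),\phi_t)$ generated by the $\mathcal{N}\phi_t$ multiplier: it is not a perfect time derivative, yet it must be controlled so as to reconstruct the full left-hand side---the $\mathbb{H}^{\alpha,\ep}_m$-norm together with all three dissipation integrals---while keeping \emph{every} constant independent of the individual trajectories and of $T$. This forces the nonlinearity to be tamed through the monotonicity and convexity encoded in (H2) (and the viscous dissipation $\alpha\|\phi_t\|^2$) rather than through a dimension-sensitive, data-dependent local Lipschitz bound, and it requires showing the average-difference remainder to be genuinely quadratic in $|M_1-M_2|+|N_1-N_2|$ with a trajectory-independent coefficient; the a priori bounds of Theorem \ref{t:existence} and the inequalities \eqref{Fcons-1}--\eqref{Fcons-3} are precisely what make this possible.
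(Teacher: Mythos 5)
Your architecture is the right one and, as far as this paper permits a comparison, it coincides with the author's: the proof of Proposition \ref{t:cont-dep} is deferred to \cite{Shomberg-n16}, but its closest in-text analogue, the proof of Lemma \ref{t:rob-1}, runs exactly this program (difference equations; multipliers $A_N^{-1}\tilde\phi_t$, $\tilde\phi_t$, $\tilde\theta$; the monotonicity in (H2); Gr\"onwall). Your additional multiplier $\mathcal{N}\hat\phi$ is in fact necessary here, since Proposition \ref{t:cont-dep} does not assume (H6), so the coercivity trick $c_0\|\tilde\phi\|^2-(J*\tilde\phi,\tilde\phi)\ge (c_0-c_J)\|\tilde\phi\|^2$ used in Lemma \ref{t:rob-1} is unavailable; pairing against $\mathcal{N}\hat\phi$ produces $\tfrac{d}{dt}\|\hat\phi\|^2_{V'}$ directly and lets the nonlocal term be absorbed by Young's inequality using $J\in W^{1,1}$. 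Your accounting for $\bar\nu_1\sim\{\alpha^{-2},\ep^{-1}\}$ and for the source of the $(|M_1-M_2|+|N_1-N_2|)^2$ inhomogeneity is also correct.

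The gap is the step you yourself flag and then only gesture at: the pairing $(F'(\phi_1)-F'(\phi_2),\phi_t)$ generated by the $\mathcal{N}\phi_t$ multiplier. You assert it must be ``tamed through the monotonicity and convexity encoded in (H2)'', but monotonicity of $s\mapsto F'(s)+a(x)s$ yields sign information only when the increment $F'(\phi_1)-F'(\phi_2)$ is paired with $\phi=\phi_1-\phi_2$ itself; against $\phi_t=\partial_t\phi_1-\partial_t\phi_2$, which bears no pointwise sign relation to $\phi$, hypothesis (H2) gives nothing (the paper's own inequality \eqref{diff-11} in the proof of Lemma \ref{t:rob-1} commits exactly this slip). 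The routes actually available are: (a) a Lipschitz bound $\|F'(\phi_1)-F'(\phi_2)\|\le C_F\|\phi\|$, which is what the paper uses elsewhere (see \eqref{diff-19.1} and the proof of Theorem \ref{t:upper-continuity}), followed by Young's inequality against $\tfrac{\alpha}{2}\|\phi_t\|^2$, giving $\tfrac{C_F^2}{\alpha^2}\cdot\alpha\|\phi\|^2$ consistently with $\bar\nu_1\sim\alpha^{-2}$ --- but $F\in C^{2,1}_{loc}$ supplies only a \emph{local} Lipschitz constant, so $C_F$ is trajectory-dependent unless uniform pointwise bounds are first established; or (b) rewriting the term as the time derivative of a convex (Bregman-type) remainder, which does not close because a cross term of the form $F''(\phi_2)\partial_t\phi_2\,\phi$ survives. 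The same difficulty infects your treatment of the average remainder $\langle\phi\rangle\int_\Omega(F'(\phi_1)-F'(\phi_2))\,dx$: via (H4), \eqref{Fcons-3.1} and the energy identity \eqref{wk-2}, its coefficient involves $\|F(\phi_i)\|_{L^\infty(0,T;L^1(\Omega))}$, i.e.\ the initial energies, so without further argument $\bar\nu_2$ is not independent of $\zeta_{01},\zeta_{02}$ as the statement requires. The skeleton is right, but until these two estimates are carried out with trajectory-independent constants the proof is not complete.
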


As before, we can now formalize the semi-dynamical system generated by Problem P$_{\alpha,\ep}$.

\begin{corollary}  \label{t:Lip-semif}
Let the assumptions of Theorem \ref{t:existence} be satisfied. 
We can define a strongly continuous semigroup (of solution operators) $S_{\alpha,\ep}=(S_{\alpha,\ep}(t))_{t\ge0}$, for each $\alpha>0$ and $\ep>0$,
\begin{equation*}
S_{\alpha,\ep}(t):\mathcal{X}^{\alpha,\ep}_m\rightarrow \mathcal{X}^{\alpha,\ep}_m
\end{equation*}
by setting, for all $t\geq 0,$ 
\begin{equation*}
S_{\alpha,\ep}(t)\zeta_0:=\zeta(t)
\end{equation*}
where $\zeta(t)=(\phi(t),\theta(t))$ is the unique global weak solution to Problem P$_{\alpha,\ep}$.
Furthermore, as a consequence of \eqref{diff-0}, if we assume 
\[
M_1=M_2 \quad\text{and}\quad N_1=N_2,
\]
the semigroup $S_{\alpha,\ep}(t):\mathcal{X}^{\alpha,\ep}_m\rightarrow \mathcal{X}^{\alpha,\ep}_m$ is Lipschitz continuous on $\mathcal{X}^{\alpha,\ep}_m$, uniformly in $t$ on compact intervals. 
\end{corollary}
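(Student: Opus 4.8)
The plan is to assemble the statement from the existence result (Theorem \ref{t:existence}), the contractive estimate (Proposition \ref{t:cont-dep}), and the conservation laws \eqref{con-mass}, \eqref{con-heat}. First I would verify that the solution operator is well defined as a map of $\mathcal{X}^{\alpha,\ep}_m$ into itself. Existence of a global weak solution is furnished by Theorem \ref{t:existence}, and uniqueness follows from Proposition \ref{t:cont-dep}: setting $\zeta_{01}=\zeta_{02}$ in \eqref{diff-0} forces the two solutions to coincide for all $t$. To see that $\zeta(t)$ remains in $\mathcal{X}^{\alpha,\ep}_m$, observe that the conservation laws \eqref{con-mass} and \eqref{con-heat} give $\langle\phi(t)\rangle=\langle\phi_0\rangle$ and $\langle\theta(t)\rangle=\langle\theta_0\rangle$, so the averages stay bounded by $m$, while the regularity \eqref{wk-0.2} guarantees $F(\phi(t))\in L^1(\Omega)$ for every $t$. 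The semigroup identities $S_{\alpha,\ep}(0)=\mathrm{Id}$ and $S_{\alpha,\ep}(t+s)=S_{\alpha,\ep}(t)S_{\alpha,\ep}(s)$ then follow in the usual way from uniqueness and the autonomy of Problem P$_{\alpha,\ep}$, since the solution issued from $\zeta(s)$ and the time translate $\zeta(\cdot+s)$ solve the same problem with the same datum.

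Next I would establish strong continuity of $t\mapsto S_{\alpha,\ep}(t)\zeta_0$ in the metric of $\mathcal{X}^{\alpha,\ep}_m$. The regularity $\phi,\theta\in C([0,T];H)$ from \eqref{ws-1} and \eqref{ws-4}, together with the continuous embedding $H\hookrightarrow V'$, immediately yields continuity of $t\mapsto\zeta(t)$ in $\mathbb{H}^{\alpha,\ep}_m$. It remains to check that $t\mapsto\int_\Omega F(\phi(t))\,dx$ is continuous; this I would deduce from the energy equality \eqref{wk-2}, which shows that $\mathcal{E}_\ep$ is continuous in $t$, after subtracting the nonlocal quadratic term and the term $\tfrac{\ep}{2}\|\theta(t)\|^2$, both of which are continuous as consequences of $\phi,\theta\in C([0,T];H)$.

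The heart of the matter is the Lipschitz estimate. Assuming $M_1=M_2$ and $N_1=N_2$, the second summand on the right of \eqref{diff-0} vanishes, so taking square roots gives
\[
\|\zeta_1(t)-\zeta_2(t)\|_{\mathbb{H}^{\alpha,\ep}_m} \le e^{\bar\nu_1 t/2}\|\zeta_1(0)-\zeta_2(0)\|_{\mathbb{H}^{\alpha,\ep}_m} \le e^{\bar\nu_1 T/2}\, d_{\mathcal{X}^{\alpha,\ep}_m}(\zeta_{01},\zeta_{02}),
\]
which controls the first term of the metric uniformly on $[0,T]$. The remaining task is to bound the potential contribution $\left|\int_\Omega F(\phi_1(t))\,dx-\int_\Omega F(\phi_2(t))\,dx\right|^{1/2}$ by the same right-hand side. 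Here I would exploit the convex splitting \eqref{convex}, writing the difference as $\int_\Omega[G(\phi_1)-G(\phi_2)]\,dx-\tfrac{a^*}{2}\int_\Omega(\phi_1^2-\phi_2^2)\,dx$. The quadratic term factors as $\int_\Omega(\phi_1-\phi_2)(\phi_1+\phi_2)\,dx$ and is handled by Cauchy--Schwarz, using that $\|\phi_1(t)-\phi_2(t)\|\le\alpha^{-1/2}\|\zeta_1(t)-\zeta_2(t)\|_{\mathbb{H}^{\alpha,\ep}_m}$ and that $\|\phi_i(t)\|$ is bounded on $[0,T]$ via \eqref{wk-2}. For the convex part I would write $G(\phi_1)-G(\phi_2)=\int_0^1 G'(\sigma\phi_1+(1-\sigma)\phi_2)(\phi_1-\phi_2)\,d\sigma$ and estimate by H\"{o}lder's inequality, controlling $G'$ through the growth condition (H4) and the a priori bounds $\phi_i\in L^\infty(0,T;L^{2+2q}(\Omega))$ and $F(\phi_i)\in L^\infty(0,T;L^1(\Omega))$ from Theorem \ref{t:existence}, and controlling $\phi_1-\phi_2$ in the dual exponent by interpolating between $L^2$ and $L^6\hookleftarrow V$ (recall $\Omega\subset\mathbb{R}^3$).

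I expect the main obstacle to be precisely this potential term: since $F$ is not globally Lipschitz, one cannot dominate $|F(\phi_1)-F(\phi_2)|$ pointwise, and the $\mathbb{H}^{\alpha,\ep}_m$-norm does not directly control $\phi_1-\phi_2$ in the $L^{p'}$ spaces where $F'$ naturally lives. The interpolation step is what converts the available $L^2$-control (arriving with a factor $\alpha^{-1/2}$ from \eqref{diff-0}) together with the uniform $V$-bound on trajectories into the required estimate, and one must check that the resulting constant, while depending on $\alpha$, $\ep$, $T$, and the size of the data, stays uniform in $t\in[0,T]$. Combining the two pieces yields a bound of the form $d_{\mathcal{X}^{\alpha,\ep}_m}(S_{\alpha,\ep}(t)\zeta_{01},S_{\alpha,\ep}(t)\zeta_{02})\le L(T)\,d_{\mathcal{X}^{\alpha,\ep}_m}(\zeta_{01},\zeta_{02})$ for all $t\in[0,T]$, which is the asserted uniform Lipschitz continuity.
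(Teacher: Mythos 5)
Your treatment of well-definedness, invariance of $\mathcal{X}^{\alpha,\ep}_m$, the semigroup property, and strong continuity in $t$ is correct and matches what the paper leaves implicit: existence from Theorem \ref{t:existence}, uniqueness and the $\mathbb{H}^{\alpha,\ep}_m$-estimate from Proposition \ref{t:cont-dep}, conservation of the averages from \eqref{con-mass} and \eqref{con-heat}, and continuity of $t\mapsto\int_\Omega F(\phi(t))\,dx$ via the energy equality \eqref{wk-2}. The paper itself offers no proof of this corollary, and for the Lipschitz claim it points only to \eqref{diff-0}, which controls the $\mathbb{H}^{\alpha,\ep}_m$-norm of the difference of trajectories; the intended reading (consistent with how the analogous property of $S_{0,0}$ is invoked later, namely ``local Lipschitz continuity \ldots on $\mathbb{H}^{0,0}_m$'') is Lipschitz continuity measured in the $\mathbb{H}^{\alpha,\ep}_m$-topology, which your first display already delivers since $\|\zeta_{01}-\zeta_{02}\|_{\mathbb{H}^{\alpha,\ep}_m}\le d_{\mathcal{X}^{\alpha,\ep}_m}(\zeta_{01},\zeta_{02})$.

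The extra step you undertake, controlling $\bigl|\int_\Omega F(\phi_1(t))\,dx-\int_\Omega F(\phi_2(t))\,dx\bigr|^{1/2}$, is where the argument breaks. Your convex-splitting and interpolation scheme produces, at best, a bound of the form $\bigl|\int_\Omega F(\phi_1(t))\,dx-\int_\Omega F(\phi_2(t))\,dx\bigr|\le C(\alpha,T,R)\,\|\zeta_{01}-\zeta_{02}\|_{\mathbb{H}^{\alpha,\ep}_m}$, i.e.\ a bound that is \emph{linear} in the difference of the data. Because the metric carries the exponent $1/2$ on this term, such a bound only yields
\[
\left|\int_\Omega F(\phi_1(t))\,dx-\int_\Omega F(\phi_2(t))\,dx\right|^{1/2}\le C^{1/2}\,\|\zeta_{01}-\zeta_{02}\|_{\mathbb{H}^{\alpha,\ep}_m}^{1/2},
\]
which is H\"older continuity of exponent $1/2$ with respect to $d_{\mathcal{X}^{\alpha,\ep}_m}$, not Lipschitz continuity (and $x^{1/2}\not\le Cx$ as $x\to0^+$, so boundedness of the attracting sets does not rescue this). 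To obtain genuine Lipschitz continuity in the full metric you would need the difference of the potential energies at time $t$ to be controlled \emph{quadratically} by $\|\zeta_{01}-\zeta_{02}\|_{\mathbb{H}^{\alpha,\ep}_m}$ plus linearly by $\bigl|\int_\Omega F(\phi_{01})\,dx-\int_\Omega F(\phi_{02})\,dx\bigr|$, and nothing in \eqref{diff-0} or in your sketch produces that. There are also secondary technical gaps in that step (e.g.\ $F(\sigma\phi_1+(1-\sigma)\phi_2)\in L^1(\Omega)$ does not follow directly from $F(\phi_1),F(\phi_2)\in L^1(\Omega)$ without invoking the splitting \eqref{convex} and a lower bound on $G$), but the square-root obstruction is the essential one. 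The fix is simply to prove the statement the paper actually uses: drop the potential term and establish Lipschitz continuity from $(\mathcal{X}^{\alpha,\ep}_m,d_{\mathcal{X}^{\alpha,\ep}_m})$ into $\mathbb{H}^{\alpha,\ep}_m$, uniformly on compact time intervals, exactly as in the first display of your Lipschitz paragraph.
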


We now give a dissipation estimate for Problem P$_{\alpha,\ep}$ from which we deduce the existence of an absorbing set.
The idea of the estimate follows \cite[Proposition 2]{Gal&Miranville09}.
It is here where we require the slight modification of hypothesis (H1).

\begin{lemma}  \label{t:diss-1}
Assume (H1)-(H4) hold.
Let $m\ge0$, $\delta_0>0$, $\delta\in(0,\delta_0]$, $(\alpha,\ep)\in(0,1]\times(0,1]$, $\zeta_0=(\phi_0,\theta_0)^{tr}\in \mathbb{H}^{\alpha,\ep}_m$ with $F(\phi_0)\in L^1(\Omega).$
Assume $\zeta=(\phi,\theta)^{tr}$ is a weak solution to Problem P$_{\alpha,\ep}$.
There is a positive constant $\nu_3=\nu_3(\delta_0,J,\Omega)$, but independent of $\alpha$, $\ep$, and $\zeta_0$, such that, for all $t\ge0$, the following holds, 
\begin{align}
& \|\hat\phi(t)\|^2_{V'} + \alpha\|\hat\phi(t)\|^2 + \|\sqrt{a}\phi(t)\|^2 + \|\hat\theta(t)\|^2 + (F(\phi(t)),1) - (J*\phi(t),\hat\phi(t))  \notag \\ 
& + \int_t^{t+1} \left( \|\phi_t(s)\|^2_{V'} + \alpha\|\phi_t(s)\|^2 + \|\theta(s)\|^2_{V} \right) ds  \notag \\
& \le Q(\|\zeta_0\|_{\mathbb{H}^{\alpha,\ep}_m})e^{-\nu_3t} + \frac{1}{\nu_3}Q(m),  \label{ap-1}
\end{align}
for some monotonically increasing functions $Q$.

Consequently, the set given by
\begin{equation}  \label{abs-set}
\mathcal{B}^{\alpha,\ep}_0 := \left\{ \zeta\in\mathbb{H}^{\alpha,\ep}_m : \|\zeta\|^2_{\mathbb{H}^{\alpha,\ep}_m} \le \frac{1}{\nu_3}Q(m)+1 \right\},
\end{equation}
where $Q(\cdot,\cdot)$ is the function from \eqref{ap-1}, is a closed, bounded absorbing set in $\mathbb{H}^{\alpha,\ep}_m$, positively invariant under the semigroup $S_{\alpha,\ep}$.
\end{lemma}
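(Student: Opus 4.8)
The plan is to upgrade the dissipation furnished by the energy equality \eqref{wk-2} into genuine exponential decay by constructing a Lyapunov-type functional and closing a differential inequality of the form $\frac{d}{dt}\Lambda + \nu_3\Lambda + \kappa\,\mathcal{D} \le Q(m)$, where $\mathcal{D}:=\|\phi_t\|_{V'}^2 + \alpha\|\phi_t\|^2 + \|\theta\|_V^2$ is the dissipation appearing under the integral in \eqref{ap-1}. Differentiating the energy (equivalently, testing \eqref{ws-7} with $\mu$ and \eqref{ws-8} with $\theta$ and adding) gives $\frac{d}{dt}\mathcal{E}_\ep + \|\nabla\mu\|^2 + \alpha\|\phi_t\|^2 + \|\nabla\theta\|^2 = 0$; the skew coupling $\pm\delta\langle\phi_t,\theta\rangle$ cancels here, which is exactly what will keep all constants independent of $\ep$. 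Since $\langle\phi\rangle$ and $\langle\theta\rangle$ are conserved and bounded by $m$ via \eqref{con-mass} and \eqref{con-heat}, I work throughout with the zero-average parts $\hat\phi,\hat\theta$, recording that $\|\nabla\mu\|^2$ dominates $\|\phi_t\|_{V'}^2$ and that $\|\theta\|_V^2 \le \|\nabla\theta\|^2 + m^2$.

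The energy identity alone controls only $\|\nabla\mu\|$, $\sqrt\alpha\|\phi_t\|$ and $\|\nabla\theta\|$, so next I would introduce the lower-order multiplier. Testing \eqref{ws-7} with $\varphi=\mathcal{N}\hat\phi$ and using \eqref{NLr-1} together with $\langle\phi_t\rangle=0$ produces $\tfrac12\frac{d}{dt}\big(\|\hat\phi\|_{V'}^2+\alpha\|\hat\phi\|^2\big) + \langle\hat\phi,\rho\rangle - (J*\phi,\hat\phi) - \delta\langle\hat\phi,\theta\rangle = 0$, where $\rho=a\phi+F'(\phi)$. The crucial coercivity step is to bound $\langle\hat\phi,\rho\rangle$ from below: using the strict monotonicity $a+F''\ge c_0$ from (H2) (equivalently the convex splitting \eqref{convex}) and the consequences \eqref{Fcons-1}--\eqref{Fcons-3} of (H3)--(H4), one gets a lower bound of the form $c\big(\|\sqrt a\,\phi\|^2+(F(\phi),1)\big)-Q(m)$. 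The nonlocal term $-(J*\phi,\hat\phi)$ is kept on the left, since it already figures in \eqref{ap-1}; that it does not destroy positivity is guaranteed by the quantitative assumption $c_1>\tfrac12\|J\|_{L^1}$ in (H3) and the pointwise positivity of $a$ in the modified (H1). The coupling terms $\delta\langle\hat\phi,\theta\rangle$ and the residual $\delta\langle\phi_t,\theta\rangle$ are absorbed by Young's inequality into $\|\nabla\theta\|^2$ and $\alpha\|\phi_t\|^2$, using $\delta\le\delta_0$ so that the resulting constants depend only on $\delta_0,J,\Omega$.

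I would then form $\Lambda:=\mathcal{E}_\ep+\eta\big(\|\hat\phi\|_{V'}^2+\alpha\|\hat\phi\|^2\big)$ with $\eta>0$ fixed small enough, independently of $\alpha$ and $\ep$, that the cross terms generated above are dominated by the dissipation and that $\Lambda$ is comparable (again with $\ep,\alpha$-independent constants) to the instantaneous part of \eqref{ap-1}. For the heat contribution I would test \eqref{ws-8} with $\hat\theta$ to control $\frac\ep2\frac{d}{dt}\|\hat\theta\|^2+\|\nabla\theta\|^2$, invoking the Poincar\'e--Wirtinger inequality \eqref{Poincare} to dominate the instantaneous $\theta$-term by $\|\nabla\theta\|^2$ with an $\ep$-uniform constant (here $\ep\le1$ is used). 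Adding the three contributions yields the differential inequality, after which the uniform Gronwall lemma gives $\Lambda(t)\le\Lambda(0)e^{-\nu_3 t}+\tfrac1{\nu_3}Q(m)$ with $\Lambda(0)\le Q(\|\zeta_0\|_{\mathbb{H}^{\alpha,\ep}_m})$, and integrating the remaining dissipation over $[t,t+1]$ furnishes the integral term in \eqref{ap-1}. The absorbing set \eqref{abs-set} and its positive invariance then follow at once.

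The main obstacle I anticipate is keeping $\nu_3$ and all constants independent of $\alpha$ and $\ep$. The degeneracy of the heat dynamics as $\ep\to0^+$ is delicate: the time derivative of the heat energy carries the weight $\ep$ while its dissipation $\|\nabla\theta\|^2$ does not, so a naive estimate produces a decay rate of order $1/\ep$. The resolution is structural---the $\pm\delta\langle\phi_t,\theta\rangle$ coupling cancels at the energy level, and the heat estimate is closed through $\|\nabla\theta\|^2$ rather than through the time derivative of the heat energy, so only $\ep\le1$ is needed. A secondary difficulty is the coercivity balance in the lower-order step, where the gain from $\langle\hat\phi,\rho\rangle$ must strictly dominate the nonlocal contribution; this is precisely where the sharp constant $c_1>\tfrac12\|J\|_{L^1}$ and the positivity of $a$ enter.
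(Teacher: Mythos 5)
Your proposal is correct and follows essentially the same route as the paper's proof in Appendix~\ref{s:append}: the energy identity (the paper's \eqref{ap-2}, obtained with the multipliers $A_N^{-1}\phi_t$, $\phi_t$, $\hat\theta$) is combined with the lower-order multiplier $\mathcal{N}\hat\phi$ (the paper's \eqref{ap-3}, with your $\eta$ playing the role of the paper's $\xi$), the sum is made coercive using (H1)--(H3) and the consequences \eqref{Fcons-1}--\eqref{Fcons-3}, and Gr\"onwall plus integration over $(t,t+1)$ yields \eqref{ap-1}. Your structural observations about why the constants stay independent of $\alpha$ and $\ep$ match the paper's treatment.
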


\begin{remark}  \label{r:uniform-radius}
According to the proof (see Appendix \ref{s:append}), $\nu_3$ is a function of $\delta_0$ and the relation is $\nu_3\sim 1-c\delta^2_0>0$ for a sufficiently small constant $c>0.$
\end{remark}

\begin{remark}
The following global uniform bound follows immediately from estimate \eqref{ap-1} and \eqref{ap-5}.
Under the assumptions of Lemma \ref{t:diss-1}, there holds
\begin{equation}  \label{unif-bnd}
\limsup_{t\rightarrow+\infty} \|\zeta(t)\|_{\mathbb{H}^{\alpha,\ep}_m} \le E(0) + \frac{1}{\nu_3}Q(m) =: Q(\|\zeta_0\|_{\mathbb{H}^{\alpha,\ep}_m},m)
\end{equation}
for a monotonically increasing function $Q$, independent of $\alpha$ and $\ep$
\end{remark}

With the existence of a bounded absorbing set set $\mathcal{B}_0^{\alpha,\ep}$ (in Lemma \ref{t:diss-1}), the existence of a global attractor now depends on the precompactness of the semigroup of solution operators $S_{\alpha,\ep}$. 
To this end, we know that there is a $t_*>0$ such that the map $S_{\alpha,\ep}(t_*)$ is a strict contraction on $\mathbb{H}^{\alpha,\ep}_m$, up to a precompact pseudometric $M_*$ (the proof is based on the proof of Proposition \ref{t:cont-dep}).
Such a contraction is commonly used in connection with phase-field type equations as an alternative to establish the precompactness of a semigroup; for particular recent results, also see, for example, \cite{Grasselli-2012,Grasselli-Schimperna-2011,Zheng&Milani05}.)
The existence of a global attractor in $\mathbb{H}^{\alpha,\ep}_m$ now follows by well-known arguments and can be found in \cite{Temam88,Babin&Vishik92} for example.
Additional characteristics of the attractor follow thanks to the gradient structure of Problem P$_{\alpha,\ep}$.
Indeed, from \eqref{wk-2} we see that if there is a $t_0>0$ in which 
\[
\mathcal{E}_\ep(t_0)=\mathcal{E}_\ep(0),
\]
then, for all $t\in(0,t_0),$ 
\begin{equation}  \label{no-grad}
\int_0^{t} \left( \|\nabla\mu(s)\|^2 + \alpha\|\phi_t(s)\|^2 + \|\nabla\theta(s)\|^2 \right)ds = 0.
\end{equation}
Hence, we deduce $\phi_t(t)=0$ and $\theta_t(t)=0$ for all $t\in(0,t_0)$.
Therefore, $\zeta=(\phi,\theta)^{tr}$ is a fixed point of the trajectory $\zeta(t)=S_{\alpha,\ep}(t)\zeta_0$.
Since the semigroup $S_{\alpha,\ep}(t)$ is precompact, the system $(\mathcal{X}^{\alpha,\ep}_m,S_{\alpha,\ep},\mathcal{E}_\ep)$ is gradient/conservative for each $\alpha\in(0,1]$ and $\ep\in(0,1]$.
In particular, the first three claims in the statement of the following theorem are a direct result of the existence of the an absorbing set, a Lyapunov functional $\mathcal{E}_\ep$, and the fact that the system $(\mathcal{X}^{\alpha,\ep}_m,S_{\alpha,\ep}(t),\mathcal{E}_\ep)$ is gradient. 
The fourth property is a direct result \cite[Theorem VII.4.1]{Temam88}, and the fifth follows from \cite[Theorem 6.3.2]{Zheng04}.

\begin{theorem}  \label{t:global}
For each $\alpha\in(0,1]$ and $\ep\in(0,1]$ the semigroup $S_{\alpha,\ep}=(S_{\alpha,\ep}(t))_{t\ge0}$ admits a global attractor $\mathcal{A}^{\alpha,\ep}$ in $\mathbb{H}^{\alpha,\ep}_m$. 
The global attractor is invariant under the semiflow $S_{\alpha,\ep}$ (both positively and negatively) and attracts all nonempty bounded subsets of $\mathbb{H}^{\alpha,\ep}_m$; precisely, 

\begin{description}

\item[1] for each $t\geq 0$, $S_{\alpha,\ep}(t)\mathcal{A}^{\alpha,\ep} = \mathcal{A}^{\alpha,\ep}$, and 

\item[2] for every nonempty bounded subset $B$ of $\mathbb{H}^{\alpha,\ep}_m$,
\[
\lim_{t\rightarrow\infty}{\rm{dist}}_{\mathbb{H}^{\alpha,\ep}_m}(S_{\alpha,\ep}(t)B,\mathcal{A}^{\alpha,\ep}) := \lim_{t\rightarrow\infty}\sup_{\zeta\in B}\inf_{\xi\in\mathcal{A}^{\alpha,\ep}}\|S_{\alpha,\ep}(t)\zeta-\xi\|_{\mathbb{H}^{\alpha,\ep}_m} = 0.
\]

\end{description}

\noindent \noindent Additionally,

\begin{description}

\item[3] the global attractor is unique maximal compact invariant subset in $\mathbb{H}^{\alpha,\ep}_m$ given by
\[
\mathcal{A}^{\alpha,\ep} := \omega (\mathcal{B}^{\alpha,\ep}_0) := \bigcap_{s\geq 0}{\overline{\bigcup_{t\geq s}S_{\alpha,\ep}(t)\mathcal{B}^{\alpha,\ep}_0}}^{\mathbb{H}^{\alpha,\ep}_m}.
\]

\end{description}

\noindent Furthermore, 

\begin{description}

\item[4] the global attractor $\mathcal{A}^{\alpha,\ep}$ is connected and given by the union of the unstable manifolds connecting the equilibria of $S_{\alpha,\ep}(t)$, and

\item[5] for each $\zeta_0=(\phi_0,\theta_0)^{tr}\in\mathbb{H}^{\alpha,\ep}_m$, the set $\omega(\zeta_0)$ is a connected compact invariant set, consisting of the fixed points of $S_{\alpha,\ep}(t).$

\end{description}

\end{theorem}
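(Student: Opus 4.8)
The plan is to invoke the standard existence theorem for global attractors of gradient systems (as in \cite[Theorem VII.4.1]{Temam88} and \cite{Babin&Vishik92}), which rests on three ingredients: a bounded absorbing set, asymptotic compactness of the semigroup, and a strict Lyapunov functional. The first ingredient is already supplied by $\mathcal{B}_0^{\alpha,\ep}$ from Lemma \ref{t:diss-1}, and the third will be the energy functional $\mathcal{E}_\ep$ from \eqref{wk-1}. The real work is therefore to establish asymptotic compactness and to verify that $\mathcal{E}_\ep$ strictly decreases along non-stationary trajectories. Granting these, claims \textbf{1}--\textbf{3} follow from the general machinery (the $\omega$-limit set of the absorbing set, together with its invariance and maximality), while claims \textbf{4}--\textbf{5} follow from the gradient structure via \cite[Theorem VII.4.1]{Temam88} and \cite[Theorem 6.3.2]{Zheng04}, as already indicated preceding the statement.

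For asymptotic compactness I would use the method of a contractive pseudometric rather than attempt to exhibit compactness directly, since the energy space $\mathbb{H}^{\alpha,\ep}_m$ carries no compact embedding to exploit. Starting from the continuous dependence computation behind Proposition \ref{t:cont-dep}, applied to two trajectories issuing from data with equal averages (so that $M_1=M_2$ and $N_1=N_2$), I would isolate the genuinely dissipative contributions from the lower-order coupling and nonlocal terms. The point is that the difference $\bar\phi=\phi_1-\phi_2$, $\bar\theta=\theta_1-\theta_2$ obeys an energy identity whose leading terms decay, while the remainder involves only the convolution $J*\bar\phi$ and the potential difference $F'(\phi_1)-F'(\phi_2)$, quantities controlled in norms strictly weaker than those governing the phase space and hence precompact along bounded trajectories. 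Integrating and optimizing over a suitable time $t_*$ would yield
\[
\|S_{\alpha,\ep}(t_*)\zeta_{01}-S_{\alpha,\ep}(t_*)\zeta_{02}\|_{\mathbb{H}^{\alpha,\ep}_m}^2 \le \lambda\|\zeta_{01}-\zeta_{02}\|_{\mathbb{H}^{\alpha,\ep}_m}^2 + M_*(\zeta_{01},\zeta_{02})
\]
with $\lambda<1$ and $M_*$ a precompact pseudometric on $\mathcal{B}_0^{\alpha,\ep}$. By the standard criterion for asymptotic smoothness via contractive functions (cf. \cite{Grasselli-2012,Grasselli-Schimperna-2011,Zheng&Milani05}) this makes $S_{\alpha,\ep}(t_*)$ an $\omega$-limit compact map, which together with the absorbing set delivers the compact global attractor $\mathcal{A}^{\alpha,\ep}=\omega(\mathcal{B}_0^{\alpha,\ep})$.

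To confirm the gradient structure I would argue exactly as sketched before the statement: the energy equality \eqref{wk-2} shows $\mathcal{E}_\ep$ is nonincreasing along trajectories, and if $\mathcal{E}_\ep(t_0)=\mathcal{E}_\ep(0)$ for some $t_0>0$ then \eqref{no-grad} forces $\phi_t\equiv 0$ and $\theta_t\equiv 0$ on $(0,t_0)$, so that $\zeta$ is stationary. Thus $\mathcal{E}_\ep$ is a strict Lyapunov functional whose critical points coincide with the equilibria of $S_{\alpha,\ep}$, and $(\mathcal{X}^{\alpha,\ep}_m,S_{\alpha,\ep},\mathcal{E}_\ep)$ is gradient. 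Connectedness of $\mathcal{A}^{\alpha,\ep}$ and its description as the union of the unstable manifolds of the equilibria then follow from the cited general results, as does the fact that each $\omega(\zeta_0)$ consists solely of fixed points.

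The main obstacle I anticipate is the contractive estimate itself. The bound \eqref{diff-0} as stated carries the growing factor $e^{\bar\nu_1 t}$ and is therefore not directly contractive, so the genuine work lies in reorganizing the energy identity for the difference of solutions to separate a strictly contracting leading part from a remainder absorbable into a precompact seminorm. The first-order-in-time viscous term $\alpha\phi_t$ and the nonlocal coupling with $\theta$ complicate this splitting, and verifying the precompactness of $M_*$ will require the regularity gains already at hand, in particular the bound $\theta_t\in L^2(0,T;H)$ in \eqref{wk-0.3} and the boundedness of trajectories in $\mathbb{V}^{\alpha,\ep}_m$, to compensate for the absence of any compact embedding in the base energy space.
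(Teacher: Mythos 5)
Your proposal follows essentially the same route as the paper: the bounded absorbing set from Lemma \ref{t:diss-1}, asymptotic compactness via a strict contraction of $S_{\alpha,\ep}(t_*)$ up to a precompact pseudometric $M_*$ built from the continuous dependence argument of Proposition \ref{t:cont-dep}, the strict Lyapunov functional $\mathcal{E}_\ep$ via \eqref{wk-2} and \eqref{no-grad}, and the citations to \cite{Temam88} and \cite{Zheng04} for claims \textbf{4} and \textbf{5}. The paper is in fact less explicit than you are about the contractive estimate (it simply asserts the existence of $t_*$ and defers to the literature), so your identification of the $e^{\bar\nu_1 t}$ factor in \eqref{diff-0} as the point requiring real work is an accurate reading of where the argument is thinnest.
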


The next result shows the global attractor is bounded in a more regular space.
Further regularity results can be found in \cite[Section 4.4]{Shomberg-n16}.

\begin{lemma}
Under the assumptions of Lemma \ref{t:diss-1}, the set given by
\begin{equation}  \label{reg-set}
\mathcal{B}^{\alpha,\ep}_1 := \left\{ \zeta\in\mathbb{V}^{\alpha,\ep}_m : \|\zeta\|^2_{\mathbb{V}^{\alpha,\ep}_m} \le \left( \frac{1}{\ep}+1 \right) \left( E(0) + \left( \frac{2}{\nu_3}+1 \right) Q_\alpha(m) +1 \right) \right\},
\end{equation}
for some positive monotonically increasing function $Q_\alpha\sim\alpha^{-1}$, is a closed, bounded absorbing set in $\mathbb{V}^{\alpha,\ep}_m$, positively invariant under the semigroup $S_{\alpha,\ep}$.

Furthermore, each global attractor $\mathcal{A}^{\alpha,\ep}$ is bounded in $\mathbb{V}^{\alpha,\ep}_m$, i.e., $\mathcal{A}^{\alpha,\ep}\subset\mathcal{B}^{\alpha,\ep}_1,$ and compact in $\mathbb{H}^{\alpha,\ep}_m$.
\end{lemma}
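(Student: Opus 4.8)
The plan is to establish a dissipative a priori estimate in the stronger norm $\|\cdot\|_{\mathbb{V}^{\alpha,\ep}_m}$ by testing each equation with a suitable multiplier, and then to convert the resulting differential inequalities into uniform, absorbing, pointwise-in-time bounds by means of the uniform Gronwall lemma, using the integral and lower-order bounds already furnished by Lemma~\ref{t:diss-1}. Since a weak solution need not a priori be regular enough to justify the multipliers used below (in particular the identity $\tfrac{d}{dt}\|\nabla\theta\|^2=2(\nabla\theta,\nabla\theta_t)$ requires $\theta_t\in V$, whereas \eqref{wk-0.3} only gives $\theta_t\in L^2(0,T;H)$), all of the estimates that follow are first carried out on the Faedo--Galerkin approximations, where every term is smooth, and then passed to the limit by weak/weak-$*$ lower semicontinuity of the norms. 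Throughout, the quantities $\|\phi(t)\|^2$ and $\|\theta(t)\|^2$, together with the integrals $\int_t^{t+1}\big(\|\phi_t\|^2_{V'}+\alpha\|\phi_t\|^2+\|\theta\|^2_V\big)\,ds$, are controlled uniformly in $t\ge 0$ and in an absorbing fashion by \eqref{ap-1}; note that the term $\|\sqrt{a}\phi(t)\|^2$ in \eqref{ap-1} bounds $\|\phi(t)\|^2$ thanks to the positivity of $a$ from (H1), and $\|\theta(t)\|^2$ is bounded since $\|\hat\theta(t)\|^2$ is controlled in \eqref{ap-1} and $|\langle\theta\rangle|\le m$.

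For the order parameter I would test \eqref{ws-7} with $\varphi=\phi$. Using $\langle\phi_t,\phi\rangle=\tfrac12\tfrac{d}{dt}\|\phi\|^2$ and $\alpha(\nabla\phi_t,\nabla\phi)=\tfrac\alpha2\tfrac{d}{dt}\|\nabla\phi\|^2$, the principal part splits as
\[
(\nabla\rho,\nabla\phi)=\int_\Omega\big(a+F''(\phi)\big)|\nabla\phi|^2\,dx+(\phi\nabla a,\nabla\phi),
\]
with $\rho$ as in \eqref{ws-6}, and here the structural hypothesis (H2) is decisive: since $a(x)+F''(s)\ge c_0$ pointwise, the first integral is bounded below by $c_0\|\nabla\phi\|^2$ and no third derivative of $F$ is needed. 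The term $(\phi\nabla a,\nabla\phi)$, the nonlocal term $(\nabla(J*\phi),\nabla\phi)$ (estimated by $d_J\|\phi\|\,\|\nabla\phi\|$ via Young's convolution inequality and $\|\nabla a\|_{L^\infty}\le d_J$ from (H1)), and the coupling $\delta(\nabla\theta,\nabla\phi)$ are all absorbed through Cauchy--Schwarz and Young, giving
\[
\tfrac{d}{dt}\big(\|\phi\|^2+\alpha\|\nabla\phi\|^2\big)+c_0\|\nabla\phi\|^2\le C\|\phi\|^2+C\delta^2\|\nabla\theta\|^2 .
\]
Since $c_0\|\nabla\phi\|^2\ge(c_0/\alpha)\,\alpha\|\nabla\phi\|^2$, this is of the form $y_1'+(c_0/\alpha)y_1\le h_1$ with $y_1=\|\phi\|^2+\alpha\|\nabla\phi\|^2$ and $\int_t^{t+1}h_1\,ds$ bounded (the forcing being controlled by $\|\phi\|^2$ and by $\int_t^{t+1}\|\theta\|_V^2\,ds$ from Lemma~\ref{t:diss-1}); Gronwall's inequality then yields an absorbing bound for $\|\phi\|^2+\alpha\|\nabla\phi\|^2$, carrying a factor $\alpha^{-1}$ as in $Q_\alpha$.

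For the temperature I would test the heat equation \eqref{rel-3} with $\theta_t$ at the Galerkin level, which gives $\ep\|\theta_t\|^2+\tfrac12\tfrac{d}{dt}\|\nabla\theta\|^2=-\delta(\phi_t,\theta_t)$, whence after Young's inequality
\[
\tfrac{d}{dt}\|\nabla\theta\|^2\le\frac{\delta^2}{\ep}\|\phi_t\|^2 .
\]
Here $\int_t^{t+1}\|\phi_t\|^2\,ds=\tfrac1\alpha\int_t^{t+1}\alpha\|\phi_t\|^2\,ds$ is controlled by \eqref{ap-1} up to the factor $\alpha^{-1}$ — precisely the source of the dependence $Q_\alpha\sim\alpha^{-1}$ in the statement — while $\int_t^{t+1}\|\nabla\theta\|^2\,ds$ is again bounded by Lemma~\ref{t:diss-1}. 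The uniform Gronwall lemma (applied with zero damping) converts these integral bounds into an absorbing pointwise bound for $\|\nabla\theta(t)\|^2$, and hence for $\ep\|\nabla\theta(t)\|^2$, which accounts for the factor $(\tfrac1\ep+1)$ appearing in \eqref{reg-set}.

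Collecting the two estimates with the pointwise control of $\|\phi\|^2$, of $\alpha\|\phi\|^2\le\|\phi\|^2$, and of $\ep\|\theta\|^2$ yields an absorbing bound for $\|\zeta(t)\|^2_{\mathbb{V}^{\alpha,\ep}_m}$; that is, $\mathcal{B}^{\alpha,\ep}_1$ absorbs bounded subsets of $\mathbb{H}^{\alpha,\ep}_m$, and its positive invariance follows from the structure of the differential inequalities above (once $\|\zeta\|^2_{\mathbb{V}^{\alpha,\ep}_m}$ lies at or below the equilibrium level it cannot subsequently exceed it). Finally, since $\mathcal{A}^{\alpha,\ep}$ is bounded in $\mathbb{H}^{\alpha,\ep}_m$ it is absorbed by $\mathcal{B}^{\alpha,\ep}_1$, so there is $t^*$ with $S_{\alpha,\ep}(t^*)\mathcal{A}^{\alpha,\ep}\subset\mathcal{B}^{\alpha,\ep}_1$; invariance ($S_{\alpha,\ep}(t^*)\mathcal{A}^{\alpha,\ep}=\mathcal{A}^{\alpha,\ep}$) then forces $\mathcal{A}^{\alpha,\ep}\subset\mathcal{B}^{\alpha,\ep}_1$, i.e. the attractor is bounded in $\mathbb{V}^{\alpha,\ep}_m$. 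Because $V\hookrightarrow H\hookrightarrow V'$ are compact (Rellich), the embedding $\mathbb{V}^{\alpha,\ep}_m\hookrightarrow\mathbb{H}^{\alpha,\ep}_m$ is compact for each fixed $\alpha,\ep>0$, so the $\mathbb{V}$-bounded attractor is compact in $\mathbb{H}^{\alpha,\ep}_m$. I expect the principal obstacle to be the rigorous justification of the temperature estimate (the $\theta_t$ multiplier and the time-differentiation of $\|\nabla\theta\|^2$) through the approximation scheme, together with the careful bookkeeping of the $\alpha^{-1}$ and $\ep^{-1}$ factors needed to keep every forcing integral finite when the multipliers are applied.
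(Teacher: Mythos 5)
The paper itself does not prove this lemma (it defers to \cite{Shomberg-n16}), so your argument can only be judged on its own terms. Your core strategy --- test \eqref{ws-7} with $\varphi=\phi$ and use (H2) to extract the coercive term $c_0\|\nabla\phi\|^2$ from $(\nabla\rho,\nabla\phi)$, test the heat equation with $\theta_t$ at the Galerkin level, and then feed the integral bounds of Lemma \ref{t:diss-1} into a (uniform) Gronwall argument --- is the natural one, the bookkeeping of the $\alpha^{-1}$ factor (coming from $\int_t^{t+1}\|\phi_t\|^2\,ds=\alpha^{-1}\int_t^{t+1}\alpha\|\phi_t\|^2\,ds$) is right, and the resulting radius is consistent with \eqref{reg-set}. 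However, two steps as written have genuine gaps. First, positive invariance: your justification (``once $\|\zeta\|^2_{\mathbb{V}^{\alpha,\ep}_m}$ lies at or below the equilibrium level it cannot subsequently exceed it'') fails for the temperature component, because the inequality $\tfrac{d}{dt}\|\nabla\theta\|^2\le\tfrac{\delta^2}{\ep}\|\phi_t\|^2$ carries \emph{no damping term}: $\|\nabla\theta\|^2$ can strictly increase while the forcing is merely integrally small, so a sublevel set of this quantity is not forward invariant. You would need either the damped version obtained by testing with $A_N\theta$, namely $\tfrac{d}{dt}\|\nabla\theta\|^2+\tfrac{\lambda}{\ep}\|\nabla\theta\|^2\le\tfrac{\delta^2}{\ep}\|\phi_t\|^2$ together with a \emph{pointwise} (not just integral) control of $\|\phi_t\|^2$, or to replace the sublevel set by $\overline{\bigcup_{t\ge t_1}S_{\alpha,\ep}(t)\mathcal{B}^{\alpha,\ep}_1}$, which is the standard repair.

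Second, the inclusion $\mathcal{A}^{\alpha,\ep}\subset\mathcal{B}^{\alpha,\ep}_1$: you argue that $\mathcal{A}^{\alpha,\ep}$, being bounded in $\mathbb{H}^{\alpha,\ep}_m$, is absorbed by $\mathcal{B}^{\alpha,\ep}_1$, but what you have established is only that $\mathcal{B}^{\alpha,\ep}_1$ absorbs bounded subsets of $\mathbb{V}^{\alpha,\ep}_m$. To absorb $\mathbb{H}$-bounded sets you need the smoothing $\mathbb{H}^{\alpha,\ep}_m\to\mathbb{V}^{\alpha,\ep}_m$, and here your Gronwall step is circular at the starting time: the inequality $y_1'+\tfrac{c_0}{\alpha}y_1\le h_1$ with $y_1=\|\phi\|^2+\alpha\|\nabla\phi\|^2$ requires $y_1$ finite at the initial instant, while the uniform Gronwall alternative requires an \emph{absorbing} bound on $\int_t^{t+1}\|\nabla\phi\|^2\,ds$, which is not among the quantities controlled by \eqref{ap-1} and is not derived in your argument (integrating your own $\varphi=\phi$ identity over $(t,t+1)$ reintroduces $\alpha\|\nabla\phi(t)\|^2$ on the right-hand side). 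This is fixable --- e.g.\ by first producing the absorbing bound on $\int_t^{t+1}\|\nabla\phi\|^2\,ds$ from the $L^2$ dissipation estimate and the structure of $\mu$, or by exploiting \eqref{wk-0.01} --- but as it stands the chain ``$\mathcal{A}^{\alpha,\ep}$ bounded in $\mathbb{H}^{\alpha,\ep}_m$ $\Rightarrow$ absorbed by $\mathcal{B}^{\alpha,\ep}_1$'' is not supported by what precedes it. (A minor further point: your appeal to ``positivity of $a$ from (H1)'' to bound $\|\phi\|^2$ by $\|\sqrt{a}\phi\|^2$ needs $\essinf a>0$, which is the strengthened form of (H1) the paper flags in the proof of Lemma \ref{t:diss-1}, not the literal $a>0$ a.e.)
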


\begin{remark}
The ``radius'' of the set $\mathcal{B}^{\alpha,\ep}_1$ depends on $\alpha$ and $\ep$ like, respectively, $\alpha^{-1}$ and $\ep^{-1}$.
\end{remark}

The final result is this section concerns bounding the global attractor $\mathcal{A}^{\alpha,\ep}$ in a more regular subspace of $\mathbb{V}^{\alpha,\ep}_m$.
For each $m\ge0$, $\alpha\in(0,1]$ and $\ep\in(0,1]$, we now define the regularized phase-space
\begin{align}
\mathbb{W}^{\alpha,\ep}_m := \{ \zeta=(\phi,\theta)^{tr} \in \mathbb{V}^{\alpha,\ep}_m : \sqrt{\alpha}\mu\in H^2(\Omega),\ |\langle \phi \rangle|, |\langle \theta \rangle| \le m \},  \notag 
\end{align}
with the norm inherited from $\mathbb{V}^{\alpha,\ep}_m$.
Also, we define the following metric space
\begin{align}
\mathcal{Y}^{\alpha,\ep}_m := \left\{ \zeta=(\phi,\theta)^{tr}\in\mathbb{W}^{\alpha,\ep}_m : F(\phi)\in L^1(\Omega) \right\},  \notag
\end{align}
endowed with the metric 
\begin{align}
d_{\mathcal{Y}^{\alpha,\ep}_m}(\zeta_1,\zeta_2) := \|\zeta_1-\zeta_2\|_{\mathbb{V}^{\alpha,\ep}_m} + \left| \int_\Omega F(\phi_1)dx - \int_\Omega F(\phi_2)dx \right|^{1/2}.  \notag
\end{align}

\begin{theorem}  \label{t:reg-set}
For each $\alpha\in(0,1]$, $\ep\in(0,1]$ and for any $t\ge t_*$, the semigroup $S_{\alpha,\ep}$ satisfies $S_{\alpha,\ep}(t):\mathcal{X}^{\alpha,\ep}_m\rightarrow\mathcal{Y}^\alpha_m.$
Moreover, the global attractor $\mathcal{A}^{\alpha,\ep}$ admitted by the semigroup $S_{\alpha,\ep}$ is bounded in $\mathbb{W}^{\alpha,\ep}_m$ and compact in $\mathbb{H}^{\alpha,\ep}_m.$
\end{theorem}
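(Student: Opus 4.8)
The plan is to establish the smoothing property $S_{\alpha,\ep}(t):\mathcal{X}^{\alpha,\ep}_m\to\mathcal{Y}^{\alpha,\ep}_m$ for $t\ge t_*$ first, since the boundedness of $\mathcal{A}^{\alpha,\ep}$ in $\mathbb{W}^{\alpha,\ep}_m$ will then follow from the strict invariance $S_{\alpha,\ep}(t_*)\mathcal{A}^{\alpha,\ep}=\mathcal{A}^{\alpha,\ep}$ (Theorem \ref{t:global}) together with the $\mathbb{V}^{\alpha,\ep}_m$-boundedness already recorded in \eqref{reg-set}. The heart of the matter is a single uniform-in-time bound. Since the chemical potential satisfies the Neumann elliptic problem $-\Delta\mu=-\phi_t$ with $\partial_n\mu=0$, elliptic regularity for $A_N$ gives $\|\mu-\langle\mu\rangle\|_{H^2(\Omega)}\le C\|\phi_t\|$ (note $\langle\phi_t\rangle=0$ guarantees solvability), so that control of $\sqrt{\alpha}\mu$ in $H^2(\Omega)$ reduces to a uniform bound on $\sqrt{\alpha}\|\phi_t(t)\|$ for $t\ge t_*$, the mean value $\langle\mu\rangle$ being treated separately. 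Everything therefore hinges on estimating $\phi_t$ in the weighted $L^2$-sense.

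To obtain this bound I would differentiate the system in time, writing $v:=\phi_t$ and $w:=\theta_t$; by conservation of mass and enthalpy \eqref{con-mass}, \eqref{con-heat}, both $v$ and $w$ have zero average, so $\mathcal{N}v$ is well-defined. Testing the time-differentiated equation \eqref{ws-7} with $\mathcal{N}v$ and using \eqref{NLr-1} produces, schematically, $\tfrac12\tfrac{d}{dt}\big(\|v\|^2_{V'}+\alpha\|v\|^2\big)+\int_\Omega\big(a+F''(\phi)\big)v^2\,dx-(J*v,v)=\delta(v,w)$. The second-order term is handled through hypothesis (H2) (equivalently the convexity splitting \eqref{convex}) together with the identity $\int_\Omega a\,v^2\,dx-(J*v,v)=\tfrac12\iint J(x-y)(v(x)-v(y))^2\,dx\,dy\ge0$, which shows the left-hand quadratic form is bounded below by $-C\|v\|^2$; since $\|v\|^2=\alpha^{-1}(\alpha\|v\|^2)$, this is absorbed into a multiple of $y(t):=\|v\|^2_{V'}+\alpha\|v\|^2$ carrying an $\alpha$-dependent constant. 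The coupling term is split by Young's inequality, $\delta(v,w)\le\tfrac12 y+\tfrac{\delta^2}{2\alpha}\|w\|^2$, so that $w=\theta_t$ enters only as forcing. The resulting differential inequality has the form $\tfrac{d}{dt}y\le g\,y+h$ with $g$ a constant (depending on $\alpha$, $c_0$, $a^*$) and $h=\tfrac{\delta^2}{\alpha}\|\theta_t\|^2$. Crucially, $\int_t^{t+1}y$ and $\int_t^{t+1}\|\theta_t\|^2$ are uniformly bounded in $t$: the former directly by \eqref{ap-1}, the latter by testing the heat equation \eqref{ws-8} with $\theta_t$ and invoking \eqref{wk-0.3} together with the $\mathbb{V}^{\alpha,\ep}_m$-regularity in \eqref{reg-set}. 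The uniform Gronwall lemma then yields $y(t)\le C(\alpha,\ep,\delta,m)$ for all $t\ge t_*$, hence $\sqrt{\alpha}\|\phi_t(t)\|\le C$.

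With this in hand I would close the elliptic bootstrap. The bound $\sqrt{\alpha}\|\Delta\mu\|=\sqrt{\alpha}\|\phi_t\|\le C$ and elliptic regularity give $\sqrt{\alpha}\|\mu-\langle\mu\rangle\|_{H^2(\Omega)}\le C$; the average $\langle\mu\rangle$ is controlled from $\mu=a\phi-J*\phi+F'(\phi)+\alpha\phi_t-\delta\theta$, where $\langle\alpha\phi_t\rangle=\alpha\,\partial_t\langle\phi\rangle=0$ by \eqref{con-mass} and the remaining terms are bounded by \eqref{reg-set} and (H4) (via \eqref{Fcons-3}). Hence $\sqrt{\alpha}\mu\in H^2(\Omega)$ with a bound uniform for $t\ge t_*$, which is exactly $S_{\alpha,\ep}(t):\mathcal{X}^{\alpha,\ep}_m\to\mathcal{Y}^{\alpha,\ep}_m$ and provides a bounded absorbing set in $\mathbb{W}^{\alpha,\ep}_m$; the invariance argument above then delivers $\mathcal{A}^{\alpha,\ep}\subset\mathbb{W}^{\alpha,\ep}_m$, bounded. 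Finally, compactness of $\mathcal{A}^{\alpha,\ep}$ in $\mathbb{H}^{\alpha,\ep}_m$ follows because the embedding $\mathbb{V}^{\alpha,\ep}_m\hookrightarrow\mathbb{H}^{\alpha,\ep}_m$ is compact (Rellich--Kondrachov gives $V\hookrightarrow\hookrightarrow H\hookrightarrow\hookrightarrow V'$, so a set bounded in $V\times V$ is precompact in the $\|\cdot\|_{\mathbb{H}^{\alpha,\ep}_m}$-topology): being bounded in $\mathbb{V}^{\alpha,\ep}_m$ and closed (as the $\omega$-limit set in Theorem \ref{t:global}), the attractor is compact.

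I expect the principal obstacle to be the differentiated energy estimate. Differentiating in time introduces $\phi_{tt}$ and $\theta_{tt}$, whose existence must be justified rigorously---most cleanly by performing the estimate on Galerkin approximations (or on finite time differences) and passing to the limit---and the second-order term $\int_\Omega\big(a+F''(\phi)\big)v^2\,dx$ together with the coupling $\delta(v,w)$ must be organized so the inequality closes with integrals uniform in $t$ rather than merely finite on $[0,T]$. All constants are permitted to depend on $\alpha^{-1}$ and $\ep^{-1}$, consistent with the stated radius of $\mathcal{B}^{\alpha,\ep}_1$.
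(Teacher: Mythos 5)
Your argument is correct and is essentially the expected one: the paper itself omits this proof (deferring to \cite[Section 4.4]{Shomberg-n16}), and the natural route in this framework is precisely what you describe --- a time-differentiated energy estimate tested against $\mathcal{N}\phi_t$ and $\phi_t$, closed by the uniform Gronwall lemma using the integrated bounds from \eqref{ap-1} to obtain $\sup_{t\ge t_*}\sqrt{\alpha}\|\phi_t(t)\|\le C$, followed by elliptic regularity for $-\Delta\mu=-\phi_t$ with $\partial_n\mu=0$ and a separate bound on $\langle\mu\rangle$ from the constitutive relation and \eqref{Fcons-3.1}. The only points to tidy are that your entering time need not coincide with the $t_*$ of the contraction argument (harmless, one may enlarge $t_*$), and that, as you already note, the differentiation in time and the use of $F''(\phi)$ must be justified at the Galerkin level, where the lower bound $a+F''(\phi)\ge c_0$ from (H2) is all that is needed.
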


\section{Upper-semicontinuity of the global attractors}  \label{s:upper-sc}

The following semicontinuity results are not possible until we provide a natural embedding for the attractor of Problem P$_{0,0}$ (here called a {\em{lift}}) into the phase-space of Problem P$_{\alpha,\ep}$.
To add motivation for the definition of the lift map, it is first worthwhile to notice that the equations \eqref{lim-1}-\eqref{lim-6} are equivalent to the system 
\begin{eqnarray}
\phi^0_t = \Delta\mu^0 &\text{in}& \Omega\times(0,T)  \label{eqlim-1} \\ 
\mu^0 = a\phi^0 - J*\phi^0 + F'(\phi^0) - \delta\theta^0 &\text{in}& \Omega\times(0,T)  \label{eqlim-2} \\ 
-\Delta\theta^0 = -\delta\phi^0_t &\text{in}& \Omega\times(0,T)  \label{eqlim-3} \\
\partial_n\mu^0 = 0 &\text{on}& \Gamma\times(0,T)  \label{eqlim-4} \\ 
\phi^0(x,0) = \phi_0(x) &\text{at}& \Omega\times\{0\}.  \label{eqlim-6}
\end{eqnarray}
Hence, in our setting, a {\em{lift}} is a mapping $\mathcal{L}:\mathbb{H}^{0,0}_m\rightarrow\mathbb{H}^{\alpha,\ep}_m$ defined by
\[
\mathcal{L}(\phi^0):=(\phi^0,\theta^0=\mathcal{M}(\phi^0)),
\]
where $\mathcal{M}:H\rightarrow H$ is a so-called {\em{canonical extension}} operator, given by
\begin{equation}  \label{extension}
\mathcal{M}(\phi^0):=\delta\mu^0.
\end{equation}
It should be noted that the chemical potential $\mu^0$, and hence $\mathcal{M}$, regularizes into $V$ for any $t>0$; more precisely, we cite \cite[Proof of Lemma 2.17]{Gal&Grasselli14}, where, for any $\tau>0$
\[
\sup_{t\ge\tau}\|\mu^0(t)\|_V \le Q(m,\tau).
\]
The {\em{lifted limit}} Problem P$_{0,0}$ can then be described by
\begin{eqnarray}
\phi^0_t = \Delta\mu^0 &\text{in}& \Omega\times(0,T)  \label{lif-1} \\ 
\mu^0 = a\phi^0 - J*\phi^0 + F'(\phi^0) - \delta\theta^0 &\text{in}& \Omega\times(0,T)  \label{lif-2} \\ 
-\Delta\theta^0 = -\delta \phi_t^0 &\text{in}& \Omega\times(0,T)  \label{lif-3} \\ 
\partial_n\mu^0 = 0 &\text{on}& \Gamma\times(0,T)  \label{lif-4} \\ 
\partial_n\theta^0 = 0 &\text{on}& \Gamma\times(0,T)  \label{lif-5} \\ 
\phi^0(x,0) = \phi_0(x) &\text{at}& \Omega\times\{0\}  \label{lif-6} \\ 
\theta^0(x,0) = \mathcal{M}(\phi_0(x)) &\text{at}& \Omega\times\{0\}. \label{lif-7}  
\end{eqnarray}
We emphasize that the {\em{lift}} of the initial data $\phi_0\in\mathbb{H}^{0,0}_m$ from Problem P$_{0,0}$ is given by $\mathcal{L}(\phi_0) = (\phi_0,\mathcal{M}(\phi_0))$.

\begin{remark}
Notice that in the original formulation of the limit problem (see \eqref{lim-1}-\eqref{lim-6}) we need the term $1+\delta^2$ in equation \eqref{lim-1} because later (below) we want to compare the difference between the perturbation problem and the lifted limit problem on the {\em{same}} compact time interval.
This observation is important because we will later rescale the time variable in order to obtain a suitable control of the problems in the weak energy phase space.
\end{remark}

For each $(\alpha,\ep)\in[0,1]\times[0,1]$, define the family of sets  in $\mathbb{H}^{\alpha,\ep}_m,$
\begin{equation}  \label{family}
\mathbb{A}^{\alpha,\ep} :=
\left\{ \begin{array}{ll} \mathcal{A}^{\alpha,\ep} & \text{for}\quad (\alpha,\ep)\in(0,1]\times(0,1] \\ \mathcal{L}\mathcal{A}^{0,0} & \text{when}\quad \alpha=0 \quad\text{and}\quad \ep=0. \end{array} \right.
\end{equation}

\subsection{A ``classical'' upper-semicontinuity result}

The main result of this section immediately follows.

\begin{theorem}  \label{t:upper-continuity} 
The family $\{\mathbb{A}^{\alpha,\varepsilon}\}_{\alpha,\varepsilon \in (0,1]},$ defined by (\ref{family}), is upper-semicontinuous at $\alpha=0$ and $\varepsilon=0$ in the metric space $\mathcal{X}^{1,1}_{m}$. 
More precisely, there holds 
\begin{equation}
\lim_{\alpha,\varepsilon \rightarrow 0}\mathrm{dist}_{\mathcal{X}^{1,1}_{m}}(\mathbb{A}^{\alpha,\varepsilon},\mathbb{A}^{0,0}):=\lim_{\alpha,\varepsilon \rightarrow 0}\sup_{a\in \mathcal{A}^{\alpha,\varepsilon}}\inf_{b\in \mathcal{LA}^{0,0}}\|a-b\|_{\mathcal{X}^{1,1}_{m}}=0.  \label{eq:upper-semicontinuity}
\end{equation}
\end{theorem}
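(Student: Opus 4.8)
The plan is to follow the contradiction scheme of Hale and Raugel \cite{Hale&Raugel88}, whose two structural ingredients are a uniform bound on the perturbed attractors and a convergence lemma asserting that trajectories of Problem P$_{\alpha,\ep}$ approach those of the lifted limit problem on compact time intervals. For the uniform bound I would exploit the invariance $S_{\alpha,\ep}(t)\mathcal{A}^{\alpha,\ep}=\mathcal{A}^{\alpha,\ep}$ together with the dissipation estimate \eqref{ap-1} (equivalently \eqref{unif-bnd}), whose constants $\nu_3$ and $Q(m)$ are independent of $\alpha$ and $\ep$. This yields non-degenerate bounds for $\|\hat\phi\|_{V'}$, $(F(\phi),1)$ and $\|\hat\theta\|$; the coercivity $c_1>\tfrac12\|J\|_{L^1}$ built into (H3) then controls $\|\phi\|_H$, and with $|\langle\phi\rangle|,|\langle\theta\rangle|\le m$ also $\|\theta\|_H$. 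Hence there is a set $B$, bounded in $\mathcal{X}^{1,1}_m$ uniformly in $(\alpha,\ep)\in(0,1]\times(0,1]$, with $\mathcal{A}^{\alpha,\ep}\subset B$; the lifted limit attractor $\mathcal{L}\mathcal{A}^{0,0}$ is likewise bounded, using the regularization $\sup_{t\ge\tau}\|\mu^0(t)\|_V\le Q(m,\tau)$.

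The heart of the argument is a convergence lemma: for any $(\alpha_n,\ep_n)\to(0,0)$ and any $b_n=(\psi_n,\chi_n)\in\mathcal{A}^{\alpha_n,\ep_n}\subset B$, I claim that, along a subsequence, $S_{\alpha_n,\ep_n}(\cdot)b_n\to\mathcal{L}S_{0,0}(\cdot)\phi^*$ in $\mathcal{X}^{1,1}_m$ at each $t\in(0,T]$, where $\phi^*$ is the limit-problem solution issuing from the weak limit of $\psi_n$. To produce it I would test the weak formulation \eqref{ws-7}--\eqref{ws-8} and combine it with the energy equality \eqref{wk-2} to obtain, uniformly in $n$, bounds for $\phi_n$ in $L^\infty(0,T;H)\cap L^2(0,T;V)$ (the gradient control coming from $\nabla\mu$ via $a+F''\ge c_0$), for $\phi_{n,t}$ in $L^2(0,T;V')$, and for $\theta_n$ in $L^2(0,T;V)$. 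Aubin--Lions then gives $\phi_n\to\phi^*$ strongly in $L^2(0,T;H)$ and a.e., so $F'(\phi_n)\to F'(\phi^*)$ by (H4); the singular terms vanish since $\alpha_n\nabla\phi_{n,t}=\sqrt{\alpha_n}(\sqrt{\alpha_n}\nabla\phi_{n,t})\to0$ and $\ep_n\theta_{n,t}\to0$. Passing to the limit identifies $(\phi^*,\theta^*)$ as a weak solution of the lifted problem \eqref{lif-1}--\eqref{lif-7} with $\theta^*=\delta\mu^*=\mathcal{M}(\phi^*)$, and the uniqueness in Theorem \ref{t:lim-existence} upgrades subsequential convergence to convergence of the whole sequence.

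Finally I would close by contradiction. If \eqref{eq:upper-semicontinuity} fails there are $(\alpha_n,\ep_n)\to0$ and $a_n\in\mathcal{A}^{\alpha_n,\ep_n}$ with $\mathrm{dist}_{\mathcal{X}^{1,1}_m}(a_n,\mathcal{L}\mathcal{A}^{0,0})\ge\eta>0$. Fixing $T$ so large that $S_{0,0}(T)$ carries the bounded projection $\Pi B$ into an $\eta/2$-neighbourhood of $\mathcal{A}^{0,0}$ (attraction, part~2 of Theorem \ref{t:lim-global}), I write $a_n=S_{\alpha_n,\ep_n}(T)b_n$ with $b_n\in\mathcal{A}^{\alpha_n,\ep_n}$ by invariance (part~1), and estimate $\mathrm{dist}(a_n,\mathcal{L}\mathcal{A}^{0,0})$ by the convergence lemma plus the continuity of $\mathcal{L}$ on the regular bounded set where $\mu^0\in V$; both contributions fall below $\eta/2$ for large $n$, which is the contradiction. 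The main obstacle is the convergence lemma in the strong topology of $\mathcal{X}^{1,1}_m$: because $\theta_n$ carries the ``wrong'' initial datum $\chi_n\ne\mathcal{M}(\psi_n)$ and $\theta_{n,t}$ is not bounded uniformly in $n$, one must show that the $O(\ep_n)$ initial layer is washed out and that $\theta_n(t)\to\delta\mu^*(t)$ strongly in $H$ for $t>0$, and simultaneously that $\int_\Omega F(\phi_n(t))\,dx\to\int_\Omega F(\phi^*(t))\,dx$ (via the a.e.\ convergence and the uniform $L^{2+2q}$ bound \eqref{wk-0.001}); it is these strong convergences, rather than the weak limits, that the metric $d_{\mathcal{X}^{1,1}_m}$ demands.
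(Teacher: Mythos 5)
Your overall scheme is the Hale--Raugel contradiction argument that the paper also follows, but you take a genuinely different route at the decisive step, and that route has a gap you yourself flag but do not close. The paper does \emph{not} argue via forward attraction of $\mathcal{A}^{0,0}$ plus a finite-time trajectory-convergence lemma; it uses the uniform boundedness of the attractors $\mathcal{A}^{\alpha_n,\varepsilon_n}$ in the \emph{regular} space $\mathbb{W}^{1,1}_m$ (Theorem \ref{t:reg-set}, the bounds \eqref{eq:upper-3}--\eqref{eq:upper-4}), so that the compact embedding $\mathbb{W}^{1,1}_m\hookrightarrow\mathbb{H}^{1,1}_m$ hands over the \emph{strong} $H\times H$ convergence of the points $\zeta_n$ for free; the trajectory analysis is then only needed to identify the limit point as lying on a complete bounded orbit of the limit problem (negative invariance of the attractors), hence in $\mathcal{L}\mathcal{A}^{0,0}$. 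You instead work only with the $\mathbb{H}^{1,1}_m$-level dissipation bound \eqref{ap-1}, and this is exactly why you cannot produce the strong convergences that the metric $d_{\mathcal{X}^{1,1}_m}$ demands: Aubin--Lions applied to $\phi_n\in L^\infty(0,T;H)\cap L^2(0,T;V)$ with $\phi_{n,t}\in L^2(0,T;V')$ gives strong convergence only in $L^2(0,T;H)$ (hence at a.e.\ $t$, and only in $C([0,T];V')$ pointwise), and for the temperature there is no compactness at all, since only $\varepsilon_n\theta_{n,t}$ is uniformly controlled while $\theta_{n,t}$ itself is not; so $\theta_n(t)\to\delta\mu^0(t)$ strongly in $H$ does not follow from your estimates. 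The missing ingredient is precisely the uniform $V\times V$ (indeed $\mathbb{W}^{1,1}_m$) bound on the attractors, which converts the weak limits into the strong ones.

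A second, independent problem with your closing step: you invoke part 2 of Theorem \ref{t:lim-global} to place $S_{0,0}(T)\Pi B$ in an $\eta/2$-neighbourhood of $\mathcal{A}^{0,0}$, but that attraction is stated only in $\mathbb{H}^{0,0}_m$, i.e.\ in the $V'$-metric, whereas your contradiction requires an $\eta/2$-neighbourhood in the much stronger lifted metric of $\mathcal{X}^{1,1}_m$ (which controls $\|\phi\|$, $\|\theta\|=\delta\|\mu^0\|$, and the $F$-functional). Upgrading the attraction to that topology is a nontrivial additional smoothing statement that you neither prove nor cite. The paper's complete-orbit formulation avoids needing any attraction property of $\mathcal{A}^{0,0}$ whatsoever. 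To repair your proof along the lines you propose, you would need (i) Theorem \ref{t:reg-set} (or the bound \eqref{eq:upper-3}) to get pointwise-in-time precompactness of $(\phi_n(t),\theta_n(t))$ in $H\times H$, and (ii) either attraction of $\mathcal{A}^{0,0}$ in the stronger metric or a switch to the paper's backward-orbit identification of the limit point.
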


\begin{proof}
The proof essentially follows the classical argument in \cite{Hale&Raugel88,Hale88} and also \cite{Zheng&Milani05}. 
Of course several modifications are required to account for the precise model problems considered here.
Let $\zeta=(\phi,\theta)\in \mathcal{A}^{\alpha,\varepsilon}$
and $\bar{\zeta}=(\bar{\phi},\bar{\theta})\in \mathcal{LA}^{0,0}$. 
We need to show that, as $\alpha,\varepsilon\rightarrow 0$,
\begin{align}
\sup_{(\phi,\theta)\in\mathcal{A}^{\alpha,\varepsilon}}\inf_{(\bar\phi,\bar\theta)\in\mathcal{LA}^{0,0}} & \left( \|\phi-\bar\phi\|^2_{V'} + \|\phi-\bar\phi\|^2 + \|\theta-\bar\theta\|^2 + \left| \int_\Omega F(\phi) dx - \int_\Omega F(\bar\phi) dx \right| \right)^{1/2} \rightarrow 0.  \label{eq:upper-1}
\end{align}
Assuming to the contrary that (\ref{eq:upper-1}) did not hold, then there
exist $\eta_{0}>0$ and sequences $(\alpha_{n})_{n\in \mathbb{N}}\subset (0,1]$, $(\varepsilon_{n})_{n\in \mathbb{N}}\subset (0,1]$, $(\zeta_{n})_{n\in \mathbb{N}}=((\phi_{n},\theta_{n}))_{n\in \mathbb{N}}\subset \mathcal{A}^{\alpha_n,\varepsilon_n}$, such that $\alpha_n\rightarrow 0$, $\varepsilon_{n}\rightarrow 0$, and for all $n\in \mathbb{N}$, 
\begin{equation}
\inf_{(\bar{\phi},\bar{\theta})\in \mathcal{LA}^{0,0}} \left( \|\phi_{n}-\bar{\phi}\|_{V'}^{2} + \|\phi_{n}-\bar{\phi}\|^{2} + \|\theta_{n}-\bar{\theta}\|^{2} + \left| \int_\Omega F(\phi_n) dx - \int_\Omega F(\bar\phi) dx \right| \right) \ge \eta_{0}^{2}.  \label{eq:upper-2}
\end{equation}
By Theorem \ref{t:reg-set}, the compact sets $\mathcal{A}^{\alpha_n,\varepsilon_n}$ are bounded in the space $\mathbb{W}^{1,1}_{m}$ and we have
the following uniform bound, for some positive constant $C>0$ independent of 
$n$, 
\begin{equation*}
\|\phi_{n}\|_{V'}^{2} + \|\phi_{n}\|^{2} + \|\theta_{n}\|^{2} \le C.
\end{equation*}
This means that there is a weakly converging subsequence of $(\zeta
_{n})_{n\in \mathbb{N}}$ (not relabelled) that converges to some $(\phi^*,\theta^*)$ weakly in $\mathbb{W}^{1,1}_{m}$. 
By the compactness of the embedding $\mathbb{W}^{1,1}_{m}\hookrightarrow \mathbb{H}^{1,1}_{m}$, the subsequence converges strongly in $\mathbb{H}^{1,1}_{m}.$ 
It now suffices to show that $(\phi^*,\theta^*)\in \mathcal{LA}^{0,0},$ since this is
a contradiction to (\ref{eq:upper-2}).

With each $\zeta_{n}=(\phi_{n},\theta_{n})\in \mathcal{A}^{\alpha_n,\varepsilon_n}$, for $n\in \mathbb{N}$, there is a complete orbit 
\begin{equation*}
(\phi^{n}(t),\theta^{n}(t))_{t\in \mathbb{R}}=(S_{\alpha_n,\varepsilon_{n}}(t)(\phi_{n},\theta_{n}))_{t\in \mathbb{R}}
\end{equation*}
contained in $\mathcal{A}^{\alpha_n,\varepsilon_{n}}$ and passing through $(\phi_{n},\theta_{n})$ where 
\begin{equation*}
(\phi^{n}(0),\theta^{n}(0)) = (\phi_{n},\theta_{n})
\end{equation*}
(cf., e.g., \cite[Proposition 2.39]{Milani&Koksch05}).
In view of the regularity $\mathcal{A}^{\alpha_n,\varepsilon_n}\subset (\mathcal{B}^{1,1}_1 \cap \mathbb{W}^{1,1}_{m})$, we obtain the uniform bounds,
\begin{equation}
\|\phi^n(t)\|^2_{V} + \|\theta^n(t)\|^2_{V} + \alpha_n\|\mu^n(t)\|^2_{H^2(\Omega)} \le C,  \label{eq:upper-3}
\end{equation}
where the constant $C>0$ is independent of $t$, $\alpha_n$, and $\varepsilon _{n}$. 
Additionally, from \cite[inequality (3.170)]{Shomberg-n16} we also have the uniform bounds 
\begin{equation}
\|\phi^n_t(t)\|^2_{V'} \le C.  \label{eq:upper-4}
\end{equation}
Now, for all $T>0$, the functions $\phi^{n}$, $\theta^{n}$, $\sqrt{\alpha_n}\phi_t^{n}=\sqrt{\alpha_n}\Delta\mu^{n}$, and $\phi_t^n$ are, respectively, bounded in $L^{\infty }(-T,T;V)$, $L^{\infty}(-T,T;V)$, $L^{\infty }(-T,T;H)$ and $L^{\infty }(-T,T;V')$. 
Thus, there is a function $\zeta=(\phi,\theta)$ and a subsequence (not relabelled), in which, 
\begin{equation}
\phi^n \rightharpoonup \phi \quad \text{weakly-* in} \quad L^{\infty}(-T,T;V),  \label{eq:conv-1}
\end{equation}
\begin{equation}
\theta^n \rightharpoonup \theta \quad \text{weakly-* in} \quad L^{\infty}(-T,T;V),  \label{eq:conv-2}
\end{equation}
\begin{equation}
\sqrt{\alpha_n}\phi_t^n \rightharpoonup 0 \quad \text{weakly-* in} \quad L^{\infty}(-T,T;H),  \label{eq:conv-3}
\end{equation}
\begin{equation}
\phi_t^n \rightharpoonup \phi_t \quad \text{weakly-* in} \quad L^{\infty}(-T,T;V').  \label{eq:conv-4}
\end{equation}
By virtue of the compact embedding 
\begin{equation}
\{ \psi\in L^{\infty}(-T,T;V) : \psi_{t}\in L^{\infty}(-T,T;V') \}\hookrightarrow C([-T,T];H)  \label{eq:injection-1}
\end{equation}
(see, e.g., \cite{Lions69}) the above convergence properties yield
\begin{equation}
\phi^{n} \rightarrow \phi \quad \text{strongly in} \quad C([-T,T];H).  \label{eq:conv-6}
\end{equation}
Thus, directly from (H2),
\begin{align}
\sup_{t\in[-T,T]} \|F'(\phi^n(t))-F'(\phi(t))\|^2 & \le \sup_{t\in[-T,T]} C\|\phi^n(t)-\phi(t)\|^{2},  \notag
\end{align}
for some positive constant $C$ independent of $n$, $\alpha_n$, and $\varepsilon_{n}.$ 
By virtue of (\ref{eq:conv-6}) it is then easy to see that
\begin{equation*}
F'(\phi^n) \rightarrow F'(\phi) \quad \text{strongly in} \quad C([-T,T];H).
\end{equation*}
It follows that $(\phi,\theta)$ is a weak solution of the limit problem P$_{0,0}$ on $\mathbb{R}$ (see \eqref{lif-1}-\eqref{lif-7}). 
In particular, $(\phi_{n},\theta_{n})=(\phi^{n}(0),\theta^{n}(0))\rightarrow (\phi(0),\theta(0))$ in $\mathbb{V}^{0,0}_m$.
Therefore, we have that $(\phi(0),\theta(0))=(\phi^*,\theta^*)$ and $(\phi(0),\theta(0))\in \mathbb{V}^{0,0}_m$. 
As $(\phi,\theta)$ is a complete orbit through $(\phi^*,\theta^*)$, it follows that $(\phi^*,\theta^*)\in \mathcal{LA}^{0,0}$, in contradiction to (\ref{eq:upper-2}). 
This completes the proof.
\end{proof}

\begin{remark}
It is important to note that the above convergence result appears in the metric induced by the topology of $\mathbb{H}^{1,1}_m$; i.e., $\alpha$ and $\ep$ are {\em{fixed}} in the norm.
This is contrary to the result that follows in the next section.
\end{remark}

\subsection{An upper-semicontinuity type result; explicit control over semidistances}

This section is devoted to an upper-semicontinuity type result for the family of global attractors admitted by Problem P$_{\alpha,\ep}$ and Problem P$_{0,0}$.
A major step toward establishing this result is demonstrating the difference between trajectories of each problem, corresponding to the same initial data $\phi_0$ can be controlled, in the topology of $\mathbb{H}^{\alpha,\ep}_m$, explicitly in terms of the relaxation parameters $\alpha$ and $\ep$.
The result given here will depend on an important assumption that allows for needed control between the kernel $J$ and the potential $F.$

\begin{description}

\item[(H6)] The constants $c_J=\|J\|_{L^1(\Omega)}$ and $c_0>0$ of (H2) satisfy $c_0>c_J.$

\end{description}

\begin{remark}
By \eqref{convex}, for all $s\in\mathbb{R},$
\[
F''(s) \ge c_0-a^*.
\]
Hence, the condition in (H6) is satisfied for any $J$ in (H1) in which
\[
\inf_{x\in\Omega}\int_\Omega \left( J(y-x)-J(y) \right) dy > -(c_0-a^*).
\]
This means the double-well potential is admissible for suitable (``quasiconvex'') kernels $J$. 
\end{remark}

The following results will lead us to the upper-semicontinuity type result.
In large part, this is possible since the radius of the absorbing set $\mathcal{B}^{\alpha,\ep}_0$ (and hence the bound on $\mathcal{A}^{\alpha,\ep}$) is {\em{independent}} of $\alpha$, $\ep$ (see Remark \ref{r:uniform-radius}).

\begin{lemma}  \label{t:du}
Assume (H1)-(H4) hold.
Let $m\ge0$, $R>0$, $T>0$, $\delta_0>0$, and $\delta\in(0,\delta_0]$.
For all $\phi_0\in \mathbb{V}^{0,0}_m$ with $F(\phi_0)\in L^1(\Omega)$ and $\|\phi_0\|_{\mathbb{V}^{0,0}_m}\le R$.
Assume $\phi^0$ is a weak solution to Problem P$_{0,0}$.
There exists a positive monotonically increasing function $Q$, depending on $R$ and $m$, such that for all $t\in[0,T]$, there hold
\begin{align}
(1+\delta^2)\int_0^t \|\phi^0_{tt}(s)\|^2_{V'}ds \le Q(R,m)T \quad \text{and hence,} \quad \int_0^t \|\mu^0_{t}(s)\|^2_{V}ds \le Q(R,m)T.  \label{du-0}
\end{align}
\end{lemma}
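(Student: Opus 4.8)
=== PROOF PROPOSAL ===

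\textbf{Overview.} The plan is to establish the two estimates in \eqref{du-0} by differentiating the limit equation \eqref{lim-1}--\eqref{lim-2} formally in time, testing against a suitable multiplier, and using the a priori regularity of the weak solution $\phi^0$ together with the conservation of mass \eqref{con-mass}. Since both estimates are equivalent via the relation $\mu^0 = a\phi^0 - J*\phi^0 + F'(\phi^0)$ (so that $\mu^0_t = a\phi^0_t - J*\phi^0_t + F''(\phi^0)\phi^0_t$) and the equation $(1+\delta^2)\phi^0_t = \Delta\mu^0 = -A_N\mu^0$, I would prove the first (the bound on $\|\phi^0_{tt}\|_{V'}$) and then deduce the second by a functional-calculus/elliptic-regularity argument.

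\textbf{Key steps.} First I would note that, because $\langle\phi^0_t\rangle = 0$ by \eqref{con-mass}, the quantity $\phi^0_t$ lies in $V_0'$ and we may apply the inverse Neumann-Laplace operator $\mathcal{N}=A_N^{-1}$. Differentiating \eqref{lim-1} in time gives $(1+\delta^2)\phi^0_{tt} = -A_N\mu^0_t$, whence $\|\phi^0_{tt}\|_{V'}$ is controlled by $\|\mu^0_t\|_V$ (using that $A_N\in\mathcal{L}(V,V')$ and that $\mu^0_t$ has zero average so the $V'$-norm reduces to the $A_N^{-1/2}$-seminorm). This already shows the two halves of \eqref{du-0} are essentially the same estimate; the real work is bounding $\int_0^t\|\mu^0_t\|_V^2\,ds$. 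For that, I would test the time-differentiated equation against $\mathcal{N}\phi^0_{tt}$ (or equivalently against $\phi^0_t$ after applying $\mathcal{N}$), producing the standard energy identity
\begin{align}
\frac{1+\delta^2}{2}\frac{d}{dt}\|\phi^0_t\|_{V'}^2 + (\mu^0_t,\phi^0_t) = 0. \notag
\end{align}
Expanding $(\mu^0_t,\phi^0_t) = (a\phi^0_t,\phi^0_t) - (J*\phi^0_t,\phi^0_t) + (F''(\phi^0)\phi^0_t,\phi^0_t)$, the $F''$ and $a$ terms are coercive up to the lower bound from (H2), while the nonlocal term is controlled by Young's inequality and $\|J*\phi^0_t\|\le c_J\|\phi^0_t\|$. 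This yields a differential inequality for $\|\phi^0_t\|_{V'}^2$ with a coercive $\|\phi^0_t\|^2$ term on the left.

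\textbf{Integrating and closing.} Integrating in time over $[0,t]$ and invoking the already-established bound $\int_0^t\|\phi^0_t\|^2\,ds\le Q(m)T$ from Lemma \ref{t:lim-reg-1} (which controls the dissipation integral), together with the bound on $\|\phi^0_t(0)\|_{V'}$ coming from the regularity of the initial datum $\phi_0\in\mathbb{V}^{0,0}_m$ with $\|\phi_0\|_{\mathbb{V}^{0,0}_m}\le R$, gives a bound on $\int_0^t\|\mu^0_t\|_V^2\,ds$ and hence on $(1+\delta^2)\int_0^t\|\phi^0_{tt}\|_{V'}^2\,ds$ of the form $Q(R,m)T$. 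To pass from the $V'$-bound on $\phi^0_{tt}$ to the $V$-bound on $\mu^0_t$ I would use that $-A_N\mu^0_t = (1+\delta^2)\phi^0_{tt}$ forces $\|\nabla\mu^0_t\|=\|A_N^{1/2}\mu^0_t\|$ to be controlled once $\langle\mu^0_t\rangle$ is handled; the average $\langle\mu^0_t\rangle$ is bounded using $\langle\mu^0_t\rangle = \langle a\phi^0_t - J*\phi^0_t + F''(\phi^0)\phi^0_t\rangle$ and the $L^\infty(0,T;L^{2+2q})$-regularity of $\phi^0$ from Theorem \ref{t:lim-existence} (controlling the growth of $F''$ against $\phi^0_t$).

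\textbf{Main obstacle.} The hard part will be twofold. First, the differentiation in time of \eqref{lim-1}--\eqref{lim-2} is formal and must be justified by a Galerkin or difference-quotient argument, since $\phi^0_{tt}$ is only expected to exist in $V'$; I would carry out the estimate on a finite-dimensional Galerkin level and pass to the limit, or equivalently work with the difference quotients $\partial_t^h\phi^0$ and let $h\to0$. Second, and more delicately, controlling the term $(F''(\phi^0)\phi^0_t,\phi^0_t)$ and the average $\langle F''(\phi^0)\phi^0_t\rangle$ requires uniform integrability of $F''(\phi^0)$, which is where the growth assumption (H5) and the regularity $\phi^0\in L^\infty(0,T;L^{2+2q}(\Omega))$ must be combined carefully — matching the growth exponent $2q$ of $F''$ against the available integrability of $\phi^0_t$ via Hölder's inequality. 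Getting the constant to depend only on $R$, $m$, and $T$ (and not on $\delta$, beyond the harmless factor $1+\delta^2$) is the bookkeeping price, and tracking it through these nonlinear terms is the crux of the argument.
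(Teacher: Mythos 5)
There is a genuine gap at the heart of your key step: the multiplier you choose does not generate the quantity the lemma asks you to bound. Testing the time-differentiated equation $(1+\delta^2)\phi^0_{tt}=\Delta\mu^0_t$ against $\mathcal{N}\phi^0_t$ produces exactly the identity you write,
\begin{align}
\frac{1+\delta^2}{2}\frac{d}{dt}\|\phi^0_t\|^2_{V'} + (\mu^0_t,\phi^0_t) = 0, \notag
\end{align}
but its dissipation term is $(\mu^0_t,\phi^0_t)\gtrsim (c_0-c_J)\|\phi^0_t\|^2$, i.e.\ a bound on $\int_0^t\|\phi^0_t\|^2\,ds$ (already known from Lemma \ref{t:lim-reg-1}) and on $\sup_t\|\phi^0_t\|^2_{V'}$. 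Nowhere does $\int_0^t\|\phi^0_{tt}\|^2_{V'}\,ds$ or $\int_0^t\|\mu^0_t\|^2_V\,ds$ appear, and the sentence ``gives a bound on $\int_0^t\|\mu^0_t\|_V^2\,ds$'' does not follow from anything you derived. (A secondary problem: the coercivity $c_0>c_J$ you invoke is hypothesis (H6), which is not assumed in this lemma.) The paper instead multiplies $(1+\delta^2)u^0_t=\Delta m^0$ by $A_N^{-1}u^0_t$ and $m^0=au^0-J*u^0+F''(\phi^0)u^0$ by $u^0_t$ (where $u^0=\phi^0_t$, $m^0=\mu^0_t$), so that the dissipation term is $(1+\delta^2)\|u^0_t\|^2_{V'}=(1+\delta^2)\|\phi^0_{tt}\|^2_{V'}$ — precisely the target quantity.

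The price of that multiplier is the cross term $(F''(\phi^0)u^0,u^0_t)$, which you correctly identify as delicate; but the paper's treatment shows your proposed strategy for it (H\"older against the $L^{2+2q}$ regularity and the growth of $F''$) is unnecessary. Using the convexity decomposition \eqref{convex}, $F''\ge c_0-a^*$ gives $(F''(\phi^0)u^0,u^0_t)\ge\frac{c_0-a^*}{2}\frac{d}{dt}\|u^0\|^2$, an exact time derivative; likewise $(au^0-J*u^0,u^0_t)=\frac14\frac{d}{dt}\int_\Omega\int_\Omega J(x-y)(u^0(x)-u^0(y))^2\,dxdy$. Everything except the dissipation term is then a total derivative of quantities controlled at $s=0$ by the regularity of the data (this is where $\|\phi_0\|_{\mathbb{V}^{0,0}_m}\le R$ enters, through $u^0(0)=(1+\delta^2)^{-1}\Delta\mu^0(0)$), and integration over $(0,t)$ finishes the first estimate; the second follows from $A_N\mu^0_t=-(1+\delta^2)\phi^0_{tt}$ plus control of $\langle\mu^0_t\rangle$, as you note. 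Your remarks on justifying the formal differentiation by Galerkin approximation are fine, but as written the core energy estimate proves a different (and already known) bound.
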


\begin{proof}
Differentiate \eqref{lim-1} and \eqref{lim-2} with respect to $t$ and set $u^0=\phi^0_t$ and $m^0=\mu^0_t.$
The differentiated system is 
\begin{eqnarray}
(1+\delta^2)u^0_t = \Delta m^0 &\text{in}& \Omega\times(0,T)  \label{du-1} \\ 
m^0 = au^0 - J*u^0 + F''(\phi^0)u^0  &\text{in}& \Omega\times(0,T)  \label{du-2} \\ 
\partial_nm^0 = 0 &\text{on}& \Gamma\times(0,T)  \label{du-3} \\ 
u^0(x,0) = (1+\delta^2)^{-1}\Delta\mu^0(0) &\text{at}& \Omega\times\{0\}.  \label{du-4}
\end{eqnarray}
Multiply \eqref{du-1} by $A^{-1}_Nu_t^0$ in $L^2(\Omega)$ and multiply \eqref{du-2} by $u_t^0$ in $L^2(\Omega)$, and sum the results to obtain 
\begin{align}
\frac{1}{4}\frac{d}{dt} \left\{ \int_\Omega\int_\Omega J(x-y)(u^0(x)-u^0(y))^2 dxdy \right\} + (F''(\phi^0)u^0,u^0_t) + (1+\delta^2)\|u^0_t\|^2_{V'} = 0.  \label{du-5}
\end{align}
By \eqref{convex}, 
\[
(F''(\phi^0)u^0,u^0_t) \ge (c_0-a^*)(u^0,u^0_t) = (c_0-a^*)\frac{1}{2}\frac{d}{dt}\|u^0\|^2,
\]
and we find
\begin{align}
\frac{d}{dt} \left\{ \int_\Omega\int_\Omega J(x-y)(u^0(x)-u^0(y))^2 dxdy + 2(c_0-a^*)\|u^0\|^2 \right\} + 4(1+\delta^2)\|u^0_t\|^2_{V'} = 0.  \label{du-55}
\end{align}
Thus, integrating \eqref{du-55} over $(0,t)$ immediately shows \eqref{du-0}.
This finishes the proof.
\end{proof}

For the following we define the {\em{projection}} $\Pi:\mathbb{H}^{\alpha,\ep}_m\rightarrow\mathbb{H}^{0,0}_m$ given by $\Pi(\phi,\theta)=\phi.$
Also, we remind the reader that the constant due to the embedding $H\hookrightarrow V'$ is denoted by $C_\Omega>0$.

\begin{lemma}  \label{t:rob-1}
Assume (H1)-(H4) and (H6) hold.
Let $m\ge0$, $R>0$, $T>0$, $\delta_0>0$, $\delta\in(0,\delta_0]$.
There exists a positive monotonically increasing function $Q$, depending on $R$, $m$ and $T$, such that for all $t\in[0,T]$, $(\alpha,\ep)\in(0,1]\times(0,1]$, and for all $\zeta_0=(\phi_0,\theta_0)^{tr}\in \mathbb{V}^{\alpha,\ep}_m$ with $\|\zeta_0\|_{\mathbb{V}^{\alpha,\ep}_m}\le R$, there holds
\begin{align}
\|\tilde\phi(t)\|^2_{V'} + \alpha\|\tilde\phi(t)\|^2 + \ep\|\tilde\theta(t)\|^2 & + \int_0^t \left( \|\tilde\phi_t(s)\|^2_{V'} + \alpha\|\tilde\phi_t(s)\|^2 + \ep\|\tilde\theta(s)\|^2_V \right) ds  \notag \\ 
& \le (\alpha+\ep)^{1/2} Q(R,m,T).  \label{rob-1}
\end{align}
\end{lemma}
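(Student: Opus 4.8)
The plan is to compare the trajectory $\zeta=(\phi,\theta)^{tr}$ of Problem P$_{\alpha,\ep}$ with the trajectory $\phi^0$ of the lifted limit problem P$_{0,0}$ (in the system form \eqref{lif-1}--\eqref{lif-7}), both emanating from the same initial datum $\phi_0$, and to set up an energy estimate for the differences
\[
\tilde\phi:=\phi-\phi^0, \qquad \tilde\theta:=\theta-\theta^0=\theta-\mathcal{M}(\phi^0)=\theta-\delta\mu^0.
\]
First I would subtract the weak formulations: equation \eqref{ws-7} for $\phi$ minus the corresponding identity for $\phi^0$ (noting that in the lifted problem the viscous term $\alpha\phi^0_t$ and the relaxation term $\ep\theta^0_t$ are \emph{absent}, so they will reappear on the right-hand side as error terms), and likewise \eqref{ws-8} for $\theta$ minus the identity $-\Delta\theta^0=-\delta\phi^0_t$ coming from \eqref{lif-3}. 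The difference equations then read, schematically,
\[
\langle\tilde\phi_t,\varphi\rangle+(\nabla\tilde\rho,\nabla\varphi)-(\nabla J*\tilde\phi,\nabla\varphi)+\alpha(\nabla\phi_t,\nabla\varphi)=\delta(\nabla\tilde\theta,\nabla\varphi),
\]
\[
\langle\tilde\theta_t,\vartheta\rangle+(\nabla\tilde\theta,\nabla\vartheta)=-\delta\langle\tilde\phi_t,\vartheta\rangle-\ep\langle\theta^0_t,\vartheta\rangle,
\]
where $\tilde\rho=a\tilde\phi+F'(\phi)-F'(\phi^0)$ and the terms $\alpha(\nabla\phi_t,\nabla\varphi)$ and $\ep\langle\theta^0_t,\vartheta\rangle$ are the genuine perturbation errors that I expect will ultimately be controlled by $(\alpha+\ep)^{1/2}Q(R,m,T)$.

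The key multiplier choices, mirroring Proposition~\ref{t:cont-dep}, are to test the $\tilde\phi$-equation with $\varphi=\mathcal{N}\tilde\phi=A_N^{-1}\tilde\phi$ (to generate $\tfrac12\tfrac{d}{dt}\|\tilde\phi\|^2_{V'}$ together with the coercive term $(\tilde\rho,\tilde\phi)$, which by (H2) and \eqref{convex} dominates like $c_0\|\tilde\phi\|^2$) and to test the $\tilde\theta$-equation with $\vartheta=\tilde\theta$ (producing $\tfrac{\ep}{2}\tfrac{d}{dt}\|\tilde\theta\|^2+\|\nabla\tilde\theta\|^2$, after multiplying through by $\ep$ to match the weighted norm). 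I would also need an $\alpha$-weighted test of the $\tilde\phi$-equation to recover the $\alpha\|\tilde\phi\|^2$ and $\alpha\|\tilde\phi_t\|^2$ terms; the convolution term $(\nabla J*\tilde\phi,\nabla\varphi)$ is handled by Young's inequality for convolutions using $c_J,d_J$, and the cross term $\delta(\nabla\tilde\theta,\nabla\varphi)$ is absorbed using Cauchy--Schwarz and the smallness afforded by $\delta\le\delta_0$. Here is exactly where hypothesis (H6), $c_0>c_J$, earns its keep: it guarantees that the coercivity constant $c_0$ beats the convolution constant $c_J$ so that the combined $\tilde\phi$-dissipation has a \emph{strictly positive} leading coefficient, with no residual dependence on $\alpha^{-1}$ or $\ep^{-1}$ of the kind that spoiled the constant $\bar\nu_1$ in Proposition~\ref{t:cont-dep}.

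After assembling these estimates and applying a Gronwall argument, the homogeneous part vanishes because $\tilde\phi(0)=0$ and $\tilde\theta(0)=\theta_0-\delta\mu^0(0)$, so the decisive contribution is the accumulated perturbation error $\int_0^t\big(\alpha|(\nabla\phi_t,\nabla\varphi)|+\ep|\langle\theta^0_t,\vartheta\rangle|\big)\,ds$. The main obstacle, and the step I would spend most effort on, is bounding these error integrals by $(\alpha+\ep)^{1/2}Q(R,m,T)$: the factor $\alpha$ multiplying $\int_0^t\|\phi_t\|^2_V\,ds$ must be controlled using the $\mathbb{V}^{\alpha,\ep}_m$-regularity \eqref{wk-0.01} and the dissipation bound \eqref{ap-1} (which is uniform in $\alpha,\ep$ by Remark~\ref{r:uniform-radius}), while the factor $\ep$ multiplying $\int_0^t\|\theta^0_t\|^2\,ds$ requires the limit-problem regularity for $\theta^0_t=\delta\mu^0_t$ supplied precisely by Lemma~\ref{t:du}, namely $\int_0^t\|\mu^0_t\|^2_V\,ds\le Q(R,m)T$. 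Getting a clean half-power $(\alpha+\ep)^{1/2}$ rather than a first power will come from the fact that one of the two weights appears already inside the norm being estimated (so that, e.g., $\alpha\int_0^t\|\phi_t\|^2_V\,ds=\sqrt{\alpha}\cdot\sqrt{\alpha}\int\cdots$ pairs a bounded $\sqrt\alpha\,\phi_t$ factor against an $O(1)$ one), and I would track the bookkeeping carefully to ensure the final constant $Q$ depends only on $R$, $m$, $T$ and the structural data, and not on $\alpha$ or $\ep$.
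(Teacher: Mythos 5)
Your setup of the difference system (same initial datum, the lifted limit problem \eqref{lif-1}--\eqref{lif-7}, error terms proportional to $\alpha$ and $\ep\theta^0_t$, and Lemma \ref{t:du} to control $\theta^0_t=\delta\mu^0_t$) matches the paper. But your choice of multipliers is where the argument breaks. You test the $\tilde\phi$-equation with $A_N^{-1}\tilde\phi$, i.e.\ you run the continuous-dependence scheme of Proposition \ref{t:cont-dep}. That scheme generates no dissipation in $\tilde\phi_t$, and therefore cannot handle the coupling term $-\delta\langle\tilde\phi_t,\tilde\theta\rangle$ coming from the heat equation tested against $\tilde\theta$: bounding it by $\delta\|\tilde\phi_t\|_{V'}\|\tilde\theta\|_V$ leaves a $\|\tilde\phi_t\|^2_{V'}$ term that must be recovered from the equation $\tilde\phi_t=\Delta\tilde\mu$, and since $\tilde\mu$ contains $\alpha\tilde\phi_t$ this is circular and is exactly what produces $\bar\nu_1\sim\{\alpha^{-2},\ep^{-1}\}$ in Proposition \ref{t:cont-dep}. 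Hypothesis (H6) does not repair this: $c_0>c_J$ concerns the competition between the coercive term $(a\tilde\phi+F'(\phi^+)-F'(\phi^0),\cdot)$ and the convolution $(J*\tilde\phi,\cdot)$, and has nothing to do with the $\tilde\phi_t$--$\tilde\theta$ coupling. Note also that the statement of the lemma puts $\int_0^t\|\tilde\phi_t\|^2_{V'}\,ds$ and $\alpha\int_0^t\|\tilde\phi_t\|^2\,ds$ on the left-hand side, which your multipliers simply never produce.

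The paper's proof instead tests \eqref{scl-1}--\eqref{scl-3} with $A_N^{-1}\tilde\phi_t$, $\tilde\phi_t$, and $\tilde\theta$. With the time-derivative multipliers the two $\delta$-cross-terms cancel identically (there is no $\delta$-term left in \eqref{usc-18}), the dissipation $2\|\tilde\phi_t\|^2_{V'}+2\alpha\|\tilde\phi_t\|^2$ appears for free, and the coercive and convolution terms become exact time derivatives, yielding the conserved-type energy $\ep\|\tilde\theta\|^2+c_0\|\tilde\phi\|^2-(J*\tilde\phi,\tilde\phi)$. Only at the very last step is (H6) invoked, to bound this energy below by $\ep\|\tilde\theta\|^2+(c_0-c_J)\|\tilde\phi\|^2\gtrsim \ep\|\tilde\theta\|^2+\|\tilde\phi\|^2_{V'}+\alpha\|\tilde\phi\|^2$; no Gr\"onwall lemma is needed, the error integrals are bounded via \eqref{tip-0} and \eqref{du-0}, and one in fact obtains the stronger rate $(\alpha+\ep)Q(R,m,T)$, of which the stated $(\alpha+\ep)^{1/2}$ is a trivial consequence (since $\alpha,\ep\le 1$) --- your explanation of the half-power as arising from splitting $\alpha=\sqrt{\alpha}\cdot\sqrt{\alpha}$ against a uniformly bounded $\sqrt{\alpha}\,\phi_t$ is not how the exponent arises, and the bounds you cite for that step (\eqref{wk-0.01} and \eqref{ap-1}) control $\sqrt{\alpha}\,\phi$ in $V$ and $\alpha\|\phi_t\|^2$ in $H$, not the gradient of $\phi_t$ that your error term $\alpha(\nabla\phi_t,\nabla\varphi)$ would require.
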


\begin{proof}
Then the difference $\tilde\zeta=(\tilde\phi,\tilde\theta)=(\phi^{+},\theta^{+})-(\phi^0,\theta^0)$, representing the difference between Problem P$_{\alpha,\ep}$ and the lifted Problem P$_{0,0}$ \eqref{lif-1}-\eqref{lif-7}, satisfies the equations
\begin{eqnarray}
\tilde\phi_t = \Delta\tilde\mu &\text{in}& \Omega\times(0,T)  \label{scl-1} \\ 
\tilde\mu = a\tilde\phi - J*\tilde\phi + F'(\phi^+) - F'(\phi^0) + \alpha\tilde\phi_t - \delta\tilde\theta + \alpha\phi^0_t &\text{in}& \Omega\times(0,T)  \label{scl-2} \\ 
\ep\tilde\theta_t - \Delta\tilde\theta = -\delta \tilde\phi_t - \ep\theta^0_t &\text{in}& \Omega\times(0,T)  \label{scl-3} \\ 
\partial_n\tilde\mu = 0 &\text{on}& \Gamma\times(0,T)  \label{scl-4} \\ 
\partial_n\tilde\theta = 0 &\text{on}& \Gamma\times(0,T)  \label{scl-5} \\ 
\tilde\phi(x,0) = 0 &\text{at}& \Omega\times\{0\}  \label{scl-6} \\ 
\tilde\theta(x,0) = \theta_0(x)-\mathcal{M}(\phi_0(x)) &\text{at}& \Omega\times\{0\}. \label{scl-7}  
\end{eqnarray}
Observe, 
\begin{align}
\langle\tilde\phi\rangle=0 \quad \text{and} \quad \langle\tilde\theta\rangle = \langle \theta_0 \rangle - \langle \mathcal{M}(\phi_0) \rangle.  \label{usc-e}
\end{align}

Multiply \eqref{scl-1}-\eqref{scl-3} in $L^2(\Omega)$ by $A_N^{-1}\tilde\phi_t$, $\tilde\phi_t$, and $\tilde\theta$, respectively, and sum the resulting identities to find, for all $t\in[0,T],$ 
\begin{align}
& \frac{d}{dt} \left\{ \ep\|\tilde\theta\|^2 \right\} + 2\|\tilde\phi_t\|^2_{V'} + 2\alpha\|\tilde\phi_t\|^2 + 2\|A^{1/2}_N\tilde\theta\|^2  \notag \\ 
& + 2(a\tilde\phi+F'(\phi_1)-F'(\phi_2),\tilde\phi_t) - 2(J*\tilde\phi,\tilde\phi_t)  \notag \\ 
& = 2|\Omega|\langle \tilde\phi \rangle \langle \tilde\mu \rangle - \alpha(\phi^0_t,\tilde\phi_t) - \ep(\theta^0_t,\tilde\theta).  \label{usc-18}
\end{align}
Estimating the resulting products using assumption (H2) yields, for any $\eta>0,$
\begin{align}
2(a\tilde\phi + F'(\phi_1) - F'(\phi_2),\tilde\phi_t) & \ge 2c_0(\tilde\phi,\tilde\phi_t) = c_0\frac{d}{dt}\|\tilde\phi\|^2,  \label{diff-11}
\end{align}
and
\begin{align}
-2(J*\tilde\phi,\tilde\phi_t) & = -\frac{d}{dt}(J*\tilde\phi,\tilde\phi).  \label{diff-13}
\end{align}
Let us now estimate the auxiliary terms 
\begin{align}
\alpha|(\phi^0_t,\tilde\phi_t)| & \le \frac{\alpha}{4}\|\phi^0_t\|^2 + \alpha\|\tilde\phi_t\|^2  \label{usc-18.1}
\end{align}
and
\begin{align}
\ep|(\theta^0_t,\tilde\theta)| \le \ep\|A^{-1/2}_N\theta^0_t\|^2 + \|A^{1/2}_N\tilde\theta\|^2.  \label{usc-19}
\end{align}
Combining \eqref{usc-18}-\eqref{usc-19} and recalling $\ep\in(0,1]$ and \eqref{H1-norm}, we have, for almost all $t\in[0,T],$
\begin{align}
& \frac{d}{dt} \left\{ \ep\|\tilde\theta\|^2 + c_0\|\tilde\phi\|^2 - (J*\tilde\phi,\tilde\phi) \right\} + 2\|\tilde\phi_t\|^2_{V'} + \alpha\|\tilde\phi_t\|^2 + \ep\|\tilde\theta\|^2_V  \notag \\ 
& \le C|\Omega||\langle \tilde\phi \rangle| |\langle \tilde\mu \rangle| + \ep\langle\tilde\theta\rangle^2 + \frac{\alpha}{4}\|\phi^0_t\|^2 + \ep\|A^{-1/2}_N\theta^0_t\|^2.  \label{usc-19.1}
\end{align}
Using the local Lipschitz assumption (H2), it is easy to show that, 
\begin{align}
|\langle \tilde\mu \rangle| & \le C_F|\langle \tilde\phi \rangle| + \delta_0|\langle \tilde\theta \rangle|  \notag \\ 
& =:\tilde\mu_*,  \label{diff-19.1}
\end{align}
for some positive constant $C_F$ depending on $c_J$ and the Lipschitz bound on $F'$.
Thanks to \eqref{tip-0} we have 
\begin{align}
\int_0^t \alpha\|\phi^0_t(s)\|^2 ds \le \alpha Q(m)T,  \label{usc-20.1}
\end{align}
and we also have the following thanks to \eqref{du-0}$_1$ (recall $\theta^0_t=\phi^0_{tt}$),
\begin{align}
\int_0^t \ep\|A^{-1/2}_N\theta^0_t(s)\|^2 ds \le \ep Q(R,m).  \label{usc-20.2}
\end{align}
Then integrating \eqref{usc-19.1} over $(0,t)$, we obtain, for all $t\in[0,T],$
\begin{align}
\ep\|\tilde\theta(t)\|^2 + c_0\|\tilde\phi(t)\|^2 & - (J*\tilde\phi(t),\tilde\phi(t)) + \int_0^t \left( 2\|\tilde\phi_t(s)\|^2_{V'} + \alpha\|\tilde\phi_t(s)\|^2 + \ep\|\tilde\theta(s)\|^2_V \right) ds  \notag \\ 
& \le \ep\|\tilde\theta(0)\|^2 + C|\Omega||\langle \tilde\phi \rangle| \tilde\mu_* + \ep\langle\tilde\theta\rangle^2 + (\alpha+\ep) Q(R,m,T).  \label{diff-19}
\end{align}
Here we apply \eqref{scl-6}-\eqref{usc-e} and \eqref{extension} to reduce \eqref{diff-19} into
\begin{align}
\ep\|\tilde\theta(t)\|^2 + c_0\|\tilde\phi(t)\|^2 & - (J*\tilde\phi(t),\tilde\phi(t)) + \int_0^t \left( 2\|\tilde\phi_t(s)\|^2_{V'} + \alpha\|\tilde\phi_t(s)\|^2 + \ep\|\tilde\theta(s)\|^2_V \right) ds  \notag \\ 
& \le (\alpha+\ep) Q(R,m,T).  \notag
\end{align}

It is here where we employ the assumption (H6), then there holds
\begin{align}
\ep\|\tilde\theta(t)\|^2 + (c_0-c_J)\|\tilde\phi(t)\|^2 & + \int_0^t \left( 2\|\tilde\phi_t(s)\|^2_{V'} + \alpha\|\tilde\phi_t(s)\|^2 + \ep\|\tilde\theta(s)\|^2_V \right) ds  \notag \\ 
& \le (\alpha+\ep) Q(R,m,T).  \notag
\end{align}
By applying the embedding $V'\hookleftarrow H$ and using the fact that $\alpha\in(0,1]$, we find 
\begin{align}
(c_0-c_J)\|\tilde\phi\|^2 & \ge \frac{c_0-c_J}{2}\|\tilde\phi\|^2 + \frac{c_0-c_J}{2}\alpha\|\tilde\phi\|^2  \notag \\ 
& \ge \frac{c_0-c_J}{2}C^{-2}_\Omega\|\tilde\phi\|^2_{V'} + \frac{c_0-c_J}{2}\alpha\|\tilde\phi\|^2  \notag \\
& \ge c\left( \|\tilde\phi\|^2_{V'} + \alpha\|\tilde\phi\|^2 \right),  \notag
\end{align}
for some suitably small constant $c>0$ independent of $\alpha$ and $\ep$.
With this we arrive at the estimate \eqref{rob-1} as claimed. 
This completes the proof.
\end{proof}

The following is the main result of this section.

\begin{theorem}  \label{t:robustness}
Under the hypotheses of Lemma \ref{t:rob-1}, the family of sets $(\mathbb{A}^{\alpha,\ep})_{\alpha,\ep\in[0,1]}$ satisfies the following upper-semicontinuity estimate in the topology of $\mathbb{H}^{\alpha,\ep}_m$, 
\begin{align}
{\rm{dist}}_{\mathbb{H}^{\alpha,\ep}_m}(\mathbb{A}^{\alpha,\ep},\mathbb{A}^{0,0}) \le (\alpha+\ep)^{1/2} Q(R,m,T),  \label{main-0}
\end{align}
for some positive increasing function $Q$ and where $R>0$ is the uniform bound on $\mathcal{A}^{\alpha,\ep}$ in $\mathbb{H}^{\alpha,\ep}_m$ (this bound is given by the radius of $\mathcal{B}^{\alpha,\ep}_0$, also recall Remark \ref{r:uniform-radius}).
\end{theorem}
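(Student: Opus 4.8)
The plan is to turn the set-semidistance in \eqref{main-0} into a single pointwise comparison that can be fed to Lemma \ref{t:rob-1}, the whole argument resting on the fact---recorded in Remark \ref{r:uniform-radius}---that $\mathcal{A}^{\alpha,\ep}$ is bounded in $\mathbb{H}^{\alpha,\ep}_m$ by a radius $R$ that is \emph{independent} of $\alpha$ and $\ep$. It therefore suffices to fix an arbitrary $a\in\mathcal{A}^{\alpha,\ep}$ and to exhibit one $b\in\mathcal{L}\mathcal{A}^{0,0}=\mathbb{A}^{0,0}$ with $\|a-b\|_{\mathbb{H}^{\alpha,\ep}_m}$ controlled (after a square root) by the right-hand side of \eqref{rob-1}; passing to the supremum over $a$ and the infimum over $b$ then yields \eqref{main-0}.

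First I would use the invariance of both attractors. By Theorem \ref{t:global} I may write $a=S_{\alpha,\ep}(T)\zeta_{-T}$ with $\zeta_{-T}=(\phi_{-T},\theta_{-T})^{tr}\in\mathcal{A}^{\alpha,\ep}$, so that $\|\zeta_{-T}\|_{\mathbb{H}^{\alpha,\ep}_m}\le R$. Since $\mathcal{A}^{0,0}$ is bounded in $\mathbb{V}^{0,0}_m$ (Theorem \ref{t:lim-global}), I may then select $\bar\phi_{-T}\in\mathcal{A}^{0,0}$ so that $\mathcal{L}\bar\phi_{-T}$ nearly realizes the semidistance $\mathrm{dist}_{\mathbb{H}^{\alpha,\ep}_m}(\zeta_{-T},\mathbb{A}^{0,0})$; in particular $\bar\phi_{-T}$ is admissible data for Lemma \ref{t:rob-1} with a bound depending only on $R$. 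The target is then $b:=\mathcal{L}S_{0,0}(T)\bar\phi_{-T}$, which genuinely belongs to $\mathbb{A}^{0,0}$ precisely because $\mathcal{A}^{0,0}$ is invariant under $S_{0,0}$ (Theorem \ref{t:lim-global}).

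Inserting the intermediate state $S_{\alpha,\ep}(T)\mathcal{L}\bar\phi_{-T}$, the triangle inequality gives
\begin{align}
\|a-b\|_{\mathbb{H}^{\alpha,\ep}_m} & \le \|S_{\alpha,\ep}(T)\zeta_{-T}-S_{\alpha,\ep}(T)\mathcal{L}\bar\phi_{-T}\|_{\mathbb{H}^{\alpha,\ep}_m} \notag \\
& \quad + \|S_{\alpha,\ep}(T)\mathcal{L}\bar\phi_{-T}-\mathcal{L}S_{0,0}(T)\bar\phi_{-T}\|_{\mathbb{H}^{\alpha,\ep}_m}. \notag
\end{align}
The second term is exactly the difference between the relaxation flow started from the \emph{lift} $\mathcal{L}\bar\phi_{-T}$ and the lifted limit flow started from $\bar\phi_{-T}$; here the initial data form a true lift, so $\tilde\phi(0)=0$ and $\tilde\theta(0)=0$ as in \eqref{scl-6}-\eqref{scl-7}, and Lemma \ref{t:rob-1} bounds it directly, of the order $(\alpha+\ep)^{1/2}Q(R,m,T)$ claimed in \eqref{main-0}. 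Crucially, it is assumption (H6) that supplies, at the close of the proof of Lemma \ref{t:rob-1}, the parameter-independent coercivity constant $c_0-c_J>0$, which keeps this bound free of any factor blowing up as $\alpha,\ep\to0$. The first term measures the spreading, under the perturbed semigroup, of the initial discrepancy $\|\zeta_{-T}-\mathcal{L}\bar\phi_{-T}\|_{\mathbb{H}^{\alpha,\ep}_m}$, which by construction does not exceed $\mathrm{dist}_{\mathbb{H}^{\alpha,\ep}_m}(\mathcal{A}^{\alpha,\ep},\mathbb{A}^{0,0})$.

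The main obstacle is to estimate this first term while preserving an \emph{explicit} rate. Invoking continuous dependence through \eqref{diff-0} (Proposition \ref{t:cont-dep}) is fatal, since the constant there behaves like $e^{\bar\nu_1 T}$ with $\bar\nu_1\sim\{\alpha^{-2},\ep^{-1}\}$ and so blows up as $\alpha,\ep\to0$. Instead I would exploit the smoothing property recorded before Theorem \ref{t:global}, namely that some $S_{\alpha,\ep}(t_*)$ is a strict contraction on $\mathbb{H}^{\alpha,\ep}_m$ up to a precompact pseudometric, with $t_*$ and the contraction factor $\lambda<1$ taken uniform in $\alpha,\ep$ thanks to the uniform radius $R$. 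Iterating the splitting over blocks of length $t_*$ up to the fixed horizon $T$ then converts the first term into $\lambda^{\lfloor T/t_*\rfloor}\,\mathrm{dist}_{\mathbb{H}^{\alpha,\ep}_m}(\mathcal{A}^{\alpha,\ep},\mathbb{A}^{0,0})$ plus a contracted remainder, producing a self-improving inequality of the form $D\le\tfrac12 D+(\alpha+\ep)^{1/2}Q(R,m,T)$ for the semidistance $D$, whence $D\le2(\alpha+\ep)^{1/2}Q(R,m,T)$ and the harmless constant is absorbed into $Q$. The two delicate checks are that the precompact remainder carried by the contraction does not reintroduce a term of order $O(1)$, and that choosing $T$ large enough to force $\lambda^{\lfloor T/t_*\rfloor}\le\tfrac12$ remains compatible with the $T$-dependence already permitted in $Q(R,m,T)$.
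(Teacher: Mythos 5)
Your decomposition is genuinely different from the paper's, and the difference is exactly where your argument breaks down. The paper writes $a=S_{\alpha,\ep}(t)\xi$ with $\xi\in\mathcal{A}^{\alpha,\ep}$ and inserts the intermediate state $\mathcal{L}S_{0,0}(t)\Pi\xi$, i.e.\ it runs the \emph{lifted limit} flow from the \emph{projection} of the perturbed datum. The first resulting term, $\|S_{\alpha,\ep}(t)\xi-\mathcal{L}S_{0,0}(t)\Pi\xi\|_{\mathbb{H}^{\alpha,\ep}_m}$, is precisely what Lemma \ref{t:rob-1} controls (same $\phi$-initial datum, so $\tilde\phi(0)=0$), and the second term $\|\mathcal{L}S_{0,0}(t)\Pi\xi-\mathcal{L}S_{0,0}(t)\zeta\|_{\mathbb{H}^{\alpha,\ep}_m}$ is made to vanish outright by choosing the competitor $\zeta=\Pi\xi\in\mathcal{A}^{0,0}$ in the infimum. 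No continuous-dependence estimate for $S_{\alpha,\ep}$ is ever invoked, which is the whole point: the only such estimate available, Proposition \ref{t:cont-dep}, degenerates as $\alpha,\ep\to0$. You instead insert $S_{\alpha,\ep}(T)\mathcal{L}\bar\phi_{-T}$, which leaves you with the term $\|S_{\alpha,\ep}(T)\zeta_{-T}-S_{\alpha,\ep}(T)\mathcal{L}\bar\phi_{-T}\|_{\mathbb{H}^{\alpha,\ep}_m}$ --- a difference of two \emph{perturbed} trajectories from distinct data --- and this is the term the paper's choice of intermediate state is designed to avoid.

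Your proposed repair of that term does not close. The smoothing property recalled before Theorem \ref{t:global} is a strict contraction \emph{up to a precompact pseudometric} $M_*$, i.e.\ an estimate of the schematic form $\|S_{\alpha,\ep}(t_*)x-S_{\alpha,\ep}(t_*)y\|\le\lambda\|x-y\|+M_*(x,y)$. The pseudometric remainder is only known to be precompact (it is a tool for proving asymptotic compactness of the semigroup); nothing in the paper bounds $M_*(x,y)$ by $(\alpha+\ep)^{1/2}$, by a fraction of $\|x-y\|$, or by the semidistance $D$. Iterating over blocks of length $t_*$ therefore yields $\lambda^{N}D+\tfrac{1}{1-\lambda}\sup M_*$, and the second summand is an uncontrolled $O(1)$ quantity, so the self-improving inequality $D\le\tfrac12 D+(\alpha+\ep)^{1/2}Q(R,m,T)$ is not established. (The uniformity in $\alpha,\ep$ of $t_*$ and $\lambda$ is also asserted rather than proved, since the contraction is built from the proof of Proposition \ref{t:cont-dep}, whose constants are parameter-dependent.) Your treatment of the other term is fine --- Lemma \ref{t:rob-1} does apply to $\|S_{\alpha,\ep}(T)\mathcal{L}\bar\phi_{-T}-\mathcal{L}S_{0,0}(T)\bar\phi_{-T}\|_{\mathbb{H}^{\alpha,\ep}_m}$ since the lifted datum gives $\tilde\phi(0)=0$ and $\tilde\theta(0)=0$ --- but the argument as a whole needs the paper's projection-based competitor (or an independent, parameter-uniform Lipschitz estimate for $S_{\alpha,\ep}$, which is not available here) to go through.
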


\begin{proof}
To begin,
\[
{\rm{dist}}_{\mathbb{H}^{\alpha,\ep}_m}(\mathbb{A}^{\alpha,\ep},\mathbb{A}^{0,0}) =  \sup_{a\in\mathcal{A}^{\alpha,\ep}}\inf_{b\in\mathcal{LA}^{0,0}}\|a-b\|_{\mathbb{H}^{\alpha,\ep}_m}.
\]
Fix $t\in[0,T]$ and $\xi\in \mathbb{A}^{\alpha,\ep}$ so that $a=S_{\alpha,\ep}(t)\xi\in\mathbb{A}^{\alpha,\ep}$. 
Then
\begin{align}
\inf_{b\in\mathcal{LA}^{0,0}}\|a-b\|_{\mathbb{H}^{0,0}_m} & = \inf_{\substack{\tau\in [0,T] \\ \zeta\in \mathcal{A}^{0,0}}}\|S_{\alpha,\ep}(t)\xi-\mathcal{L}S_{0,0}(\tau)\zeta\|_{\mathbb{H}^{\alpha,\ep}_m}  \notag \\
& \le \inf_{\zeta\in \mathcal{A}^{0,0}}\|S_{\alpha,\ep}(t)\xi-\mathcal{L}S_{0,0}(t)\zeta\|_{\mathbb{H}^{\alpha,\ep}_m}.  \notag
\end{align}
Since $S_{\alpha,\ep}(t)\xi=a$, 
\begin{align}
\sup_{\xi\in \mathcal{A}^{\alpha,\ep}}\inf_{b\in\mathcal{L}\mathcal{A}^{0,0}}\|S_{\alpha,\ep}(t)\xi-b\|_{\mathbb{H}^{\alpha,\ep}_m} & \leq \sup_{\xi\in \mathcal{A}^{\alpha,\ep}}\inf_{\zeta\in \mathcal{A}^{0,0}}\|S_{\alpha,\ep}(t)\xi-\mathcal{L}S_{0,0}(t)\zeta\|_{\mathbb{H}^{\alpha,\ep}_m}   \notag \\ 
& ={\rm{dist}}_{\mathbb{H}^{\alpha,\ep}_m}(S_{\alpha,\ep}(t)\mathcal{A}^{\alpha,\ep},\mathcal{L}S_{0,0}(t)\mathcal{A}^{0,0})  \notag \\
& \leq \max_{t\in [0,T]}{\rm{dist}}_{\mathbb{H}^{\alpha,\ep}_m}(S_{\alpha,\ep}(t)\mathcal{A}^{\alpha,\ep},\mathcal{L}S_{0,0}(t)\mathcal{A}^{0,0}). \notag
\end{align}
Thus, 
\begin{align}
\sup_{t\in [0,T]}\sup_{\xi\in \mathcal{A}^{\alpha,\ep}}\inf_{b\in\mathcal{L}\mathcal{A}^{0,0}} \|S_{\alpha,\ep}(t)\xi-b\|_{\mathbb{H}^{\alpha,\ep}_m} \leq \max_{t\in [0,T]}{\rm{dist}}_{\mathbb{H}^{\alpha,\ep}_m}(S_{\alpha,\ep}(t)\mathcal{A}^{\alpha,\ep},\mathcal{L}S_{0,0}(t)\mathcal{A}^{0,0}),  \notag
\end{align}
and 
\begin{align}
\sup_{a\in\mathbb{H}^{\alpha,\ep}_m}\inf_{b\in\mathcal{L}\mathcal{A}^{0,0}}\|a-b\|_{\mathbb{H}^{\alpha,\ep}_m} & \leq \sup_{t\in [0,T]}\sup_{\xi\in \mathcal{A}^{\alpha,\ep}}\inf_{b\in\mathcal{L}\mathcal{A}^{0,0}} \|S_{\alpha,\ep}(t)\xi-b\|_{\mathbb{H}^{\alpha,\ep}_m}  \notag \\
& \leq \max_{t\in [0,T]}{\rm{dist}}_{\mathbb{H}^{\alpha,\ep}_m}(S_{\alpha,\ep}(t) \mathcal{A}^{\alpha,\ep},\mathcal{L}S_{0,0}(t)\mathcal{A}^{0,0})  \notag \\
& \leq \max_{t\in [0,T]} \sup_{\xi\in \mathcal{A}^{\alpha,\ep}}\inf_{\zeta\in \mathcal{A}^{0,0}}\|S_{\alpha,\ep}(t)\xi-\mathcal{L}S_{0,0}(t)\zeta\|_{\mathbb{H}^{\alpha,\ep}_m}.  \notag
\end{align}

The norm is then expanded
\begin{align}
\|S_{\alpha,\ep}(t)\xi-\mathcal{L}S_{0,0}(t)\zeta\|_{\mathbb{H}^{\alpha,\ep}_m} \leq \|S_{\alpha,\ep}(t)\xi - \mathcal{L}S_{0,0}(t)\Pi\xi\|_{\mathbb{H}^{\alpha,\ep}_m} + \|\mathcal{L}S_{0,0}(t)\Pi\xi-\mathcal{L}S_{0,0}(t)\zeta\|_{\mathbb{H}^{\alpha,\ep}_m} \label{4-1}
\end{align}
so that by \eqref{rob-1} we know
\[
\|S_{\alpha,\ep}(t)\xi-\mathcal{L}S_{0,0}(t)\Pi\xi\|_{\mathbb{H}^{\alpha,\ep}_m} \le (\alpha+\ep)^{1/2} Q(R,m,T).
\]
Expand the square of the norm on the right-hand side of (\ref{4-1}) to obtain, for $\Pi\xi=\Pi(\xi_1,\xi_2)=\xi_1\in \mathbb{H}^{0,0}_m\subset H$ and $\zeta\in \mathbb{H}^{0,0}_m\subset H$,
\begin{align}
& \|\mathcal{L}S_{0,0}(t)\Pi\xi-\mathcal{L}S_{0,0}(t)\zeta\|^2_{\mathbb{H}^{\alpha,\ep}_m}  \notag \\ 
& = \|S_{0,0}(t)\Pi\xi-S_{0,0}(t)\zeta\|^2_{V'} + \alpha\|S_{0,0}(t)\Pi\xi-S_{0,0}(t)\zeta\|^2 + \ep\|\mathcal{M}(S_{0,0}(t)\Pi\xi)-\mathcal{M}(S_{0,0}(t)\zeta)\|^2.  \label{triangle-r}
\end{align}
Recall from \eqref{extension}, the terms $\mathcal{M}S_{0,0}(t)\Pi\xi$ and $\mathcal{M}S_{0,0}(t)\zeta$ can be expressed in terms of the chemical potential $\mu^0$. 
By the continuous embedding $H\hookrightarrow V'$, and by the local Lipschitz continuity of the maps $\mathcal{M}$ (this is possible thanks to (H2)) and $S_{0,0}$ on $\mathbb{H}^{0,0}_m$ (cf. \cite[Proposition 2.13]{Gal&Grasselli14}), then (\ref{triangle-r}) can be estimated by 
\[
\|\mathcal{L}S_{0,0}(t)\Pi\xi-\mathcal{L}S_{0,0}(t)\zeta\|^2_{\mathbb{H}^{\alpha,\ep}_m} \le Q(R,m,T)(1+\alpha+\ep)\|\Pi\xi-\zeta\|^2_{\mathbb{H}^{0,0}_m}.
\]
Hence, (\ref{4-1}) becomes
\begin{align}
\|S_{\alpha,\ep}(t)\xi-\mathcal{L}S_{0,0}(t)\zeta\|_{\mathbb{H}^{\alpha,\ep}_m} \le Q(R,m,T) \left( (\alpha+\ep)^{1/2} + (1+\alpha+\ep)^{1/2}\|\Pi\xi-\zeta\|_{\mathbb{H}^{0,0}_m} \right),  \notag
\end{align}
and 
\begin{align}
\inf_{\zeta\in \mathcal{A}^{0,0}}\|S_{\alpha,\ep}(t)\xi-\mathcal{L}S_{0,0}(t)\zeta\|_{\mathbb{H}^{\alpha,\ep}_m} \le Q(R,m,T) \left( (\alpha+\ep)^{1/2} + (1+\alpha+\ep)^{1/2}\|\Pi\xi-\zeta\|_{\mathbb{H}^{0,0}_m} \right).  \notag
\end{align}
Since $\Pi\xi\in\Pi \mathcal{A}^{\alpha,\ep}=\mathcal{A}^{0,0}$, then it is possible to choose $\zeta\in \mathcal{A}^{0,0}$ to be $\zeta=\Pi\xi$. 
Therefore, 
\begin{align}
{\rm{dist}}_{\mathbb{H}^{\alpha,\ep}_m}(\mathbb{A}^{\alpha,\ep},\mathbb{A}^{0,0}) & = \sup_{\alpha\in \mathcal{A}^{\alpha,\ep}}\inf_{\zeta\in \mathcal{A}^{0,0}}\|S_{\alpha,\ep}(t)\xi-\mathcal{L}S_{0,0}(t)\zeta\|_{\mathbb{H}^{\alpha,\ep}_m}  \notag \\ 
& \le Q(R,m,T) (\alpha+\ep)^{1/2}.
\end{align}
This establishes the estimate \eqref{main-0} and finishes the proof. 
\end{proof}

\section{Conclusions and further remarks}

In this article we have shown that the family of global attractors generated by a relaxation of Problem P$_{0,0}$, given by Problem P$_{\alpha,\ep}$, is upper-semicontinuous as the perturbation parameters vanish.
With this we verify a rather classical result going back to \cite{Hale&Raugel88}.
We also establish explicit control over the semidistances in explicit terms of the parameters despite the many difficulties due to the presence of the nonlocal diffusion term on the order parameter $\phi$.
Two essential results that lead to this type of continuity result are the continuous dependence estimate in Proposition \ref{t:cont-dep} (because it is instrumental in obtaining Lemma \ref{t:rob-1}) and Lemma \ref{t:du}, whereby the difficulty of controlling $\phi_{tt}$ in the nonviscous isothermal Problem P$_{0,0}$ becomes apparent.
It seems that this type of upper-semicontinuity result for nonlocal Cahn-Hilliard equations is the first of its kind.

Also concerning the global attractors, some interesting future work would include determining whether the (fractal) dimension of the global attractors found here is finite and {\em{independent}} of $\alpha$ and $\ep$.
Hence, we should also examine the existence of an exponential attractor for Problem P$_{\alpha,\ep}$, and naturally, its basin of attraction.
With that result, we could seek a {\em{robustness}} result for the family of exponential attractors (that is, the upper- and lower-semicontinuity of the attractors with respect to $\alpha$ and $\ep$).
Examining problems related to stability (and hence the approximation of the longterm behavior of a relaxation problem to the associated limit problem) may prove to be an important source of further work on nonlocal Cahn-Hilliard and nonlocal phase field models.
Concerning another perturbation/relaxation problem, it might be interesting to see if comparable results appear in the model problems described by \eqref{mot-1}.

Of course, some future work may examine several variants to the current model. 
Such variants may include a convection term that accounts for the effects of an averaged (fluid) velocity field, which naturally couples with a nonisothermal Navier-Stokes equation (on the former, see for example \cite{Porta-Grasselli-2014}).
Indeed, one may include nonconstant mobility in the nonlocal Cahn-Hilliard equation (cf. e.g. \cite{Frigeri-Grasselli-Rocca_2014}). 
It may be interesting to generalize the coupled heat equation to a Coleman-Gurtin type equation. 
Also, one may examine the associated nonlocal phase-field model \eqref{mot-1}, and the effects of generalizing the heat equation along the lines of \cite{Herrera&Pavon02,Joseph-Preziosi-89,Joseph-Preziosi-90,GPS07} where Fourier's law is replaced with a Maxwell-Cattaneo law because in this more realistic setting, ``disturbances'' travel at a {\em{finite}} speed. 

It would also be interesting to study the nonlocal variant of the Cahn-Hilliard and phase-field equations by introducing relevant dynamic boundary conditions (again, see \cite{Gal&Miranville09}).
In this case, several interesting difficulties may arise concerning the regularity of solutions because, typically in applications, $H^1(\Omega)$ regularity (or better) is sought in order to define the trace of the solution; recall, $trace:H^s(\Omega)\rightarrow H^{s-1/2}(\Gamma).$ 
Additionally, we should study the case when the potential is singular (see hypotheses in \cite[Section 3]{Gal&Grasselli14}, for example).

\appendix
\section{}  \label{s:append}

\begin{proof}[Proof of Lemma \ref{t:diss-1}]
We give a formal calculation that can be justified by a suitable Faedo-Galerkin approximation based on the proof of Theorem \ref{t:existence} above.
Let $M_0:=\langle\phi_0\rangle$ and $N_0:=\langle\theta_0\rangle$.
Multiply \eqref{rel-1}-\eqref{rel-3} by, $A_N^{-1}\phi_t$, $\phi_t$, and $\hat\theta:=\theta-N_0$, respectively, then integrate over $\Omega$, applying \eqref{NLr-1} (since $\phi_t=\phi_t-\langle\phi_t\rangle$ belongs to $V_0'$; recall \eqref{con-mass}), and sum the resulting identities to arrive at the differential identity, which holds for almost all $t\ge0$,
\begin{align}
& \frac{d}{dt} \left\{ \|\sqrt{a}\phi\|^2 + \ep\|\hat\theta\|^2 + 2(F(\phi),1) - (J*\phi,\phi) \right\} + 2\|\phi_t\|^2_{V'} + 2\alpha\|\phi_t\|^2 + 2\|\nabla\theta\|^2 = 0.  \label{ap-2}
\end{align}
Let $\hat\phi:=\phi-M_0$.
We further multiply \eqref{rel-1}-\eqref{rel-2} by, $2\xi A_N^{-1}\hat\phi$ and $2\xi\hat\phi$, respectively, in $H$, where $\xi>0$ is to be determined below.
Observe $\langle\hat\phi\rangle=0$ and $\|\hat\phi\|^2=\|\phi\|^2-M_0^2|\Omega|.$
This yields, for almost all $t\ge0,$
\begin{align}
& \frac{d}{dt} \left\{ \xi\|\hat\phi\|^2_{V'} + \xi\alpha\|\hat\phi\|^2 \right\} + 2\xi\|\sqrt{a}\hat\phi\|^2 + 2\xi(F'(\phi),\hat\phi)  \notag \\ 
& = 2\xi(J*\phi,\hat\phi) + 2\xi\delta(\theta,\hat\phi) - 2\xi M_0(a,\hat\phi).  \label{ap-3}
\end{align}
Together, \eqref{ap-2} and \eqref{ap-3} make the differential identity,
\begin{align}
& \frac{d}{dt} \left\{ \xi\|\hat\phi\|^2_{V'} + \xi\alpha\|\hat\phi\|^2 + \|\sqrt{a}\phi\|^2 + \ep\|\hat\theta\|^2 + 2(F(\phi),1) - (J*\phi,\hat\phi) \right\}  \notag \\ 
& + 2\|\phi_t\|^2_{V'} + 2\alpha\|\phi_t\|^2 + 2\xi\|\sqrt{a}\hat\phi\|^2 + 2\|\nabla\theta\|^2 + 2\xi(F'(\phi),\hat\phi)  \notag \\
& = 2\xi(J*\phi,\hat\phi) + 2\xi\delta(\theta,\hat\phi) - 2\xi M_0(a,\hat\phi).  \label{ap-4}
\end{align}
Introduce the functional defined by, for all $t\ge0$ and $\xi>0$,
\begin{equation}  \label{ap-5}
E(t):=\xi\|\hat\phi(t)\|^2_{V'} + \xi\alpha\|\hat\phi(t)\|^2 + \|\sqrt{a}\phi(t)\|^2 + \ep\|\hat\theta(t)\|^2 + 2(F(\phi(t)),1) - (J*\phi,\hat\phi) + C_F.
\end{equation}
(Observe, $E(t) = 2\mathcal{E}_\ep(t) + \xi\|\hat\phi(t)\|^2_{V'} + \xi\alpha\|\hat\phi(t)\|^2 + C_F$.)
Because of assumption (H3) and the assumption that $F(\phi_0)\in L^1(\Omega)$, we know 
\begin{equation}  \label{Fcons-5}
2(F(\phi),1) - (J*\phi,\hat\phi) \ge (2c_1-2c_1)\|\hat\phi\|^2+2c_1 M_0^2|\Omega|-2c_2|\Omega|,
\end{equation}
thus the constant $C_F$ may be chosen sufficiently large to insure $E(t)$ is non-negative for all $t\ge0$, $\alpha\in(0,1]$, $\ep\in(0,1]$, and $\xi>0.$
Then we rewrite \eqref{ap-4} as, 
\begin{align}
& \frac{d}{dt} E + \tau E = H,  \label{ap-6}
\end{align}
for some $0<\tau<\xi,$ and where 
\begin{align}
H(t) & := \tau\xi\|\hat\phi(t)\|^2_{V'} + \tau\xi\alpha\|\hat\phi(t)\|^2 + \tau\|\sqrt{a}\phi(t)\|^2 + \tau\ep\|\hat\theta(t)\|^2 + 2\tau(F(\phi(t)),1) - \tau(J*\phi,\hat\phi) + \tau C_F  \notag \\ 
& - 2\|\phi_t(t)\|^2_{V'} - 2\alpha\|\phi_t(t)\|^2 - 2\xi\|\sqrt{a}\hat\phi(t)\|^2 - 2\|\nabla\theta(t)\|^2 - 2\xi(F'(\phi(t)),\hat\phi(t))  \notag \\
& + 2\xi(J*\phi(t),\hat\phi(t)) + 2\xi\delta(\theta(t),\hat\phi(t)) - 2\xi M_0(a,\hat\phi(t)).  \label{ap-7}
\end{align}
Estimating the products on the right-hand side using the assumptions (H1)-(H3) as well as Young's inequality for convolutions (cf. e.g. \cite[Corollary 2.25]{Adams&Fournier03}), and the Poincar\'{e}-type inequality \eqref{Poincare2} yields (and recall $\delta\in(0,\delta_0]$), 
\begin{align}
2\xi(J*\phi,\hat\phi) & \le 2\xi \|J*\phi\| \|\hat\phi \|  \notag \\ 
& \le 2\xi c_J \|\hat\phi\|^2 + M_0^2\|a\|_\infty^2 + \xi^2\|\hat\phi\|^2,  \label{ap-8}
\end{align}
\begin{align}
2\xi\delta(\theta,\hat\phi) & \le 2\xi\delta_0 \|\theta\|\|\hat\phi\|  \notag \\ 
& \le \xi\delta_0^2\|\theta\|^2 + \xi\|\hat\phi\|^2  \notag \\
& \le 2\xi\delta_0^2\lambda_\Omega\|\nabla\theta\|^2 + 2\xi\delta_0^2|\Omega|N_0^2 + \xi\|\hat\phi\|^2,  \label{ap-10}
\end{align}
and
\begin{align}
-2\xi M_0(a,\hat\phi) & \le 2\xi M_0 \|a\| \|\hat\phi\|  \notag \\ 
& = 2\xi M_0\|J*1\|\|\hat\phi\|  \notag \\
& \le 2\xi M_0 c_J |\Omega|^{1/2}\|\hat\phi\|  \notag \\
& \le M_0^2 c_J^2 |\Omega| + \xi^2 \|\hat\phi\|^2.  \label{ap-11}
\end{align}
With assumption (H3) we now consider, with the aid of \eqref{Fcons-1}-\eqref{Fcons-3} (setting $m=M_0$),
\begin{align}
2\tau(F(\phi),1) - 2\xi(F'(\phi),\hat\phi) & = -2\tau\left( (F'(\phi),\hat\phi)-(F(\phi),1) \right) - 2(\xi-\tau)(F'(\phi),\hat\phi)  \notag \\
& = -2\tau(F'(\phi)\hat\phi-F(\phi),1)-2(\xi-\tau)(F'(\phi),\hat\phi)  \notag \\ 
& \le 2\tau c_9|\Omega| + 2\tau c_{10}\|\hat\phi\|^2 - (\xi-\tau)(|F(\phi)|,1) + 2(\xi-\tau)c_{11} + (\xi-\tau)c_{12}.  \label{ap-12} 
\end{align}
By (H1) again, we find that for a fixed $0<a_0<\essinf_\Omega a(x)$ (this is where we need the slightly stricter version of (H1)), there holds 
\[
a_0\|\hat\phi\|^2 \le \|\sqrt{a}\hat\phi\|^2.
\]
Moreover, due to the continuous embedding $H\hookrightarrow V',$ there is a constant, which we denote $C_\Omega>0$, so that $C_\Omega^{-2}\|\hat\phi\|^2_{V'}\le\|\hat\phi\|^2$ (cf. e.g. \cite[p. 243, Equation (6.7)]{Milani&Koksch05}), and, now with $0<\xi<1,$ 
\begin{align}
-2\xi\|\sqrt{a}\hat\phi\|^2 & \le -\frac{a_0}{2} C^{-2}_\Omega\|\hat\phi\|^2_{V'} - \frac{a_0}{2}\|\hat\phi\|^2 - \xi\|\sqrt{a}\hat\phi\|^2.  \label{ap-13}
\end{align}
Also observe that, using the Poincar\'{e}-type inequality \eqref{Poincare} again, we have
\begin{align}
-\left( 2 - 2\xi\delta_0^2\lambda_\Omega \right)\|\nabla\theta\|^2 \le -\left( 1 - 2\xi\delta_0^2\lambda_\Omega \right)\|\nabla\theta\|^2 - \frac{1}{\lambda_\Omega}\|\hat\theta\|^2.  \label{ap-13.5}
\end{align} 
Combining \eqref{ap-7}-\eqref{ap-13.5} yields, 
\begin{align}
H & \le \left( \tau\xi - \frac{a_0}{2} C^{-2}_\Omega \right)\|\hat\phi\|^2_{V'} + \left( \tau\xi\alpha + 2\xi c_J + \xi + 2\xi^2 + 2\tau c_{10} - \frac{a_0}{2} \right)\|\hat\phi\|^2  \notag \\ 
& + \left( \tau - \frac{\xi}{2} \right) \|\sqrt{a}\phi\|^2 + \left( \tau\ep - \frac{1}{\lambda_\Omega} \right)\|\hat\theta\|^2 - (\xi-\tau)(|F(\phi)|,1)  \notag \\ 
& - 2\|\phi_t\|^2_{V'} - 2\alpha\|\phi_t\|^2 - \left(1-2\xi\delta_0^2\lambda_\Omega \right) \|\nabla\theta\|^2  \notag \\
& + \tau C_F + M_0^2 c_J^2|\Omega| + 2\xi\delta_0^2|\Omega|N_0^2 + 2\tau c_9|\Omega|  \notag \\
& + 2(\xi-\tau)c_{11} + (\xi-\tau)c_{12} + \xi M_0^2|\Omega|(\langle a \rangle - a_0) + M_0^2\|a\|_\infty^2.  \label{ap-14}
\end{align}
We should note that the additional constants in $a$ on the right-hand side of \eqref{ap-14} is due to the fact that
\[
-\xi\|\sqrt{a}\hat\phi\|^2 \ge -\xi\|\sqrt{a}\phi\|^2 - \xi M_0^2|\Omega|(\langle a \rangle - a_0).
\]
Inserting \eqref{ap-14} into \eqref{ap-6} produces the differential inequality (this is where we use the condition that $0<\alpha\le1$ and $0<\ep\le1$), 
\begin{align}
& \frac{d}{dt} E + 2\|\phi_t\|^2_{V'} + 2\alpha\|\phi_t\|^2 + \left( 1-2\xi\delta_0^2\lambda_\Omega \right)\|\theta\|^2_{V}  \notag \\ 
& + \frac{a_0}{4}C^{-2}_\Omega\|\hat\phi\|^2_{V'} + \left( \frac{a_0}{2} - 2\xi c_J - \xi - 2\xi^2 - 2\tau c_{10} \right)\alpha\|\hat\phi\|^2  \notag \\ 
& + \xi\|\sqrt{a}\phi\|^2 + \frac{1}{\lambda_\Omega}\ep\|\hat\theta\|^2 + (\xi - \tau)(F(\phi),1) + \tau C_F  \notag \\ 
& \le \tau C_F + M_0^2 c_J^2|\Omega| + 2\xi\delta_0^2|\Omega|N_0^2 + \left( 1-2\xi\delta_0^2\lambda_\Omega \right)N^2_0  \notag \\ 
& + 2\tau c_9|\Omega| + 2(\xi-\tau)c_{11} + (\xi-\tau)c_{12} + \xi M_0^2|\Omega|(\langle a \rangle - a_0) + M_0^2\|a\|_\infty^2.  \notag 
\end{align}
The extra term with $N_0$ now appearing on the right-hand side is used to make the $V$ norm in $\theta$.
Now we easily see that there are $0<\tau<\xi<1$ so that 
\[
\nu_3=\nu_3(\delta_0,J,\Omega):=\min\left\{ 1-2\xi\delta_0^2\lambda_\Omega, \frac{a_0}{2} - 2\xi c_J - \xi - 2\xi^2 - 2\tau c_{10} \right\}>0.
\]
Now there holds, for almost all $t\ge0,$
\begin{align}
& \frac{d}{dt} E + \nu_3 E + \|\phi_t\|^2_{V'} + 2\alpha\|\phi_t\|^2 + \nu_3\|\theta\|^2_{V} \le Q(m).  \label{ap-15}
\end{align}
Neglecting the normed terms $\|\phi_t\|^2_{V'} + 2\alpha\|\phi_t\|^2 + \nu_3\|\theta\|^2_{V}$, then employing a Gr\"{o}nwall inequality yields, for all $t\ge0,$
\begin{equation}
E(t) \le e^{-\nu_3 t}E(0) + \frac{1}{\nu_3}Q(m).  \label{ap-16}
\end{equation}
Recall that $F(\phi_0)\in L^1(\Omega)$ by assumption, so now we easily arrive at 
\begin{align}
& \|\hat\phi(t)\|^2_{V'} + \alpha\|\hat\phi(t)\|^2 + \|\sqrt{a}\phi(t)\|^2 + \|\hat\theta(t)\|^2 + (F(\phi(t)),1) - (J*\phi(t),\hat\phi(t))  \notag \\ 
& \le E(0)e^{-\nu_3t} + \frac{1}{\nu_3}Q(m).  \label{ap-17}
\end{align}
Also, by neglecting the positive term $\nu_3E$ in \eqref{ap-15} and integrating this time over $(t,t+1)$, we find, with \eqref{ap-16}, for all $t\ge0$,
\begin{equation}
\int_t^{t+1}\left( \|\phi_t(s)\|^2_{V'} + \alpha\|\phi_t(s)\|^2 + \|\theta(s)\|^2_{V} \right)ds \le E(0)e^{-\nu_3t} + \left( \frac{1}{\nu_3} + 1 \right)Q(m).  \label{ap-18}
\end{equation}
Together, \eqref{ap-17} and \eqref{ap-18} establish \eqref{ap-1}.

The existence of the set $\mathcal{B}^{\alpha,\ep}_0$ described in \eqref{abs-set} follows directly from the dissipation estimate \eqref{ap-1}; indeed, (cf. e.g. \cite{Babin&Vishik92}).
To see why $\mathcal{B}^{\alpha,\ep}_0$ is absorbing, consider any nonempty bounded subset $B$ in $\mathbb{H}^{\alpha,\ep}_m\setminus\mathcal{B}^{\alpha,\ep}_0$.
Then we have that $\mathcal{S}_{\alpha,\ep}(t)B\subseteq\mathcal{B}^{\alpha,\ep}_0$, in $\mathbb{H}^{\alpha,\ep}_m$, for all $t\ge t_0$, where
\begin{equation}  \label{time-t0}  
t_0 := \max\left\{ \frac{1}{\nu_3}\ln(E(0)),0 \right\}.
\end{equation}
This completes the proof.
\end{proof}

\bigskip

\bibliographystyle{amsplain}

\begin{thebibliography}{10}

\bibitem{Adams&Fournier03}
Robert~A. Adams and John J.~F. Fournier, \emph{Sobolev spaces}, second ed.,
  Pure and Applied Mathematics - Volume 140, Academic Press / Elsevier Science,
  Oxford, 2003.

\bibitem{AVMRTM10}
Fuensanta Andreu-Vaillo, Jos\'{e}~M. Maz\'{o}n, Julio~D. Rossi, and
  J.~Juli\'{a}n Toledo-Melero, \emph{Nonlocal diffusion problems}, Mathematical
  Surveys and Monographs, vol. 165, American Mathematical Society, Real
  Sociedad Matem\'{a}tica Espa\~{n}ola, 2010.

\bibitem{Babin&Vishik92}
A.~V. Babin and M.~I. Vishik, \emph{Attractors of evolution equations},
  North-Holland, Amsterdam, 1992.

\bibitem{Ball04}
J.~M. Ball, \emph{Global attractors for damped semilinear wave equations},
  Discrete Contin. Dyn. Syst. \textbf{10} (2004), no.~2, 31--52.

\bibitem{CFG12}
Pierluigi Colli, Sergio Frigeri, and Maurizio Grasselli, \emph{Global existence
  of weak solutions to a nonlocal {C}ahn--{H}illiard--{N}avier--{S}tokes
  system}, J. Math. Anal. Appl. \textbf{386} (2012), no.~1, 428--444.

\bibitem{Conti-Mola-08}
Monica Conti and Gianluca Mola, \emph{3-{D} viscous {C}ahn--{H}illiard equation
  with memory}, Math. Models Methods Appl. Sci. \textbf{32} (2008), no.~11,
  1370--1395.

\bibitem{EFNT95}
A.~Eden, C.~Foias, B.~Nicolaenko, and R.~Temam, \emph{Exponential attractors
  for dissipative evolution equations}, Research in Applied Mathematics, John
  Wiley and Sons Inc., 1995.

\bibitem{Frigeri&Grasselli12}
Sergio Frigeri and Maurizio Grasselli, \emph{Global and trajectory attractors
  for a nonlocal {C}ahn--{H}illiard--{N}avier--{S}tokes system}, J. Dynam.
  Differential Equations \textbf{24} (2012), no.~4, 827--856.

\bibitem{Frigeri-Grasselli-Rocca_2014}
Sergio Frigeri, Maurizio Grasselli, and Elisabetta Rocca, \emph{A diffuse
  interface model for two-phase incompressible flows with nonlocal interactions
  and nonconstant mobility}, Nonlinearity \textbf{28} (2015), no.~5,
  1257--1293.

\bibitem{Gal08-3}
Ciprian~G. Gal, \emph{Robust family of exponential attractors for a conserved
  {C}ahn--{H}illiard model with singularly perturbed boundary conditions},
  Commun. Pure Appl. Anal. \textbf{7} (2008), no.~4, 819--836.

\bibitem{Gal08-2}
\bysame, \emph{Well-posedness and long time behavior of the non-isothermal
  viscous {C}ahn--{H}illiard equation with dynamic boundary conditions}, Dyn.
  Partial Differ. Equ. \textbf{5} (2008), no.~1, 39--67.

\bibitem{Gal&Grasselli08}
Ciprian~G. Gal and Maurizio Grasselli, \emph{The non-isothermal {A}llen--{C}ahn
  equation with dynamic boundary conditions}, Discrete Contin. Dyn. Syst.
  \textbf{22} (2008), no.~4, 1009--1040.

\bibitem{GGM08-2}
Ciprian~G. Gal, Maurizio Grasselli, and Alain Miranville, \emph{Robust
  exponential attractors for singularly perturbed phase-field equations with
  dynamic boundary conditions}, NoDEA Nonlinear Differential Equations Appl.
  \textbf{15} (2008), no.~4--5, 535--556.

\bibitem{Gal&Grasselli14}
Ciprian~G. Gal and Murizio Grasselli, \emph{Longtime behavior of nonlocal
  {C}ahn--{H}illiard equations}, Discrete Contin. Dyn. Syst. \textbf{34}
  (2014), no.~1, 145--179.

\bibitem{Gal&Miranville09-2}
Ciprian~G. Gal and Alain Miranville, \emph{Robust exponential attractors and
  convergence to equilibria for non-isothermal {C}ahn--{H}illiard equations
  with dynamic boundary conditions}, Discrete Contin. Dyn. Syst. Ser. S
  \textbf{2} (2009), no.~1, 113--147.

\bibitem{Gal&Miranville09}
\bysame, \emph{Uniform global attractors for non-isothermal viscous and
  non-viscous {C}ahn--{H}illiard equations with dynamic boundary conditions},
  Nonlinear Anal. Real World Appl. \textbf{10} (2009), no.~3, 1738--1766.

\bibitem{GGMP05-CH3D}
S.~Gatti, M.~Grasselli, A.~Miranville, and V.~Pata, \emph{Hyperbolic relaxation
  of the viscous {C}ahn--{H}illiard equation in 3{D}}, Math. Models Methods
  Appl. Sci. \textbf{15} (2005), no.~2, 165--198.

\bibitem{GGMP05-CH1D}
\bysame, \emph{On the hyperbolic relaxation of the one-dimensional
  {C}ahn--{H}illiard equation}, J. Math. Anal. Appl. \textbf{312} (2005),
  230--247.

\bibitem{GGMP05}
\bysame, \emph{A construction of a robust family of exponential attractors},
  Proc. Amer. Math. Soc. \textbf{134} (2006), no.~1, 117--127.

\bibitem{GMPZ10}
S.~Gatti, A.~Miranville, V.~Pata, and S.~Zelik, \emph{Continuous families of
  exponential attractors for singularly perturbed equations with memory}, Proc.
  Roy. Soc. Edinburgh Sect. A \textbf{140} (2010), 329--366.

\bibitem{Giacomin-Lebowitz-97}
Giambattista Giacomin and Joel~L. Lebowitz, \emph{Phase segregation dynamics in
  particle systems with long range interactions. i. macroscopic limits}, J.
  Statist. Phys. \textbf{87} (1997), no.~1--2, 37--61.

\bibitem{Grasselli-2012}
Maurizio Grasselli, \emph{Finite-dimensional global attractor for a nonlocal
  phase-field system}, Istituto Lombardo (Rend. Scienze) Mathematica
  \textbf{146} (2012), 113--132.

\bibitem{GPS07}
Maurizio Grasselli, Hana Petzeltov\'{a}, and Giulio Schimperna,
  \emph{Asymptotic behavior of a nonisothermal viscous cahn-hilliard equation
  with inertial term}, J. Differential Equations \textbf{239} (2007), no.~1,
  38--60.

\bibitem{Hale&Raugel88}
J.~Hale and G.~Raugel, \emph{Upper semicontinuity of the attractor for a
  singularly perturbed hyperbolic equation}, J. Differential Equations
  \textbf{73} (1988), no.~2, 197--214.

\bibitem{Hale88}
Jack~K. Hale, \emph{Asymptotic behavior of dissipative systems}, Mathematical
  Surveys and Monographs - No. 25, American Mathematical Society, Providence,
  1988.

\bibitem{Herrera&Pavon02}
L.~Herrera and D.~Pav\'{o}n, \emph{Hyperbolic theories of dissipation: Why and
  when do we need them?}, Phys. A \textbf{307} (2002), 121--130.

\bibitem{Joseph-Preziosi-89}
D.~D. Joseph and Luigi Preziosi, \emph{Heat waves}, Rev. Modern Phys.
  \textbf{61} (1989), no.~1, 41--73.

\bibitem{Joseph-Preziosi-90}
\bysame, \emph{Addendum to the paper: ``heat waves''}, Rev. Modern Phys.
  \textbf{62} (1990), no.~2, 375--391.

\bibitem{Kostin98}
I.~N. Kostin, \emph{Rate of attraction to a non-hyperbolic attractor},
  Asymptotic Anal. \textbf{16} (1998), no.~3, 203--222.

\bibitem{Lions69}
J.~L. Lions, \emph{Quelques m\'ethodes de r\'esolution des probl\`emes aux
  limites non lin\'eaires}, Dunod, Paris, 1969.

\bibitem{Milani&Koksch05}
Albert~J. Milani and Norbert~J. Koksch, \emph{An introduction to semiflows},
  Monographs and Surveys in Pure and Applied Mathematics - Volume 134, Chapman
  \& Hall/CRC, Boca Raton, 2005.

\bibitem{Miranville&Zelik02}
Alain Miranville and Sergey Zelik, \emph{Robust exponential attractors for
  singularly perturbed phase-field type equations}, Electron. J. Differential
  Equations \textbf{2002} (2002), no.~63, 1--28.

\bibitem{Porta-Grasselli-2014}
Francesco~Della Porta and Maurizio Grasselli, \emph{Convective nonlocal
  cahn-hilliard equations with reaction terms}, Discrete Contin. Dyn. Syst.
  Ser. B \textbf{20} (2015), no.~5, 1529--1553.

\bibitem{Robinson01}
James~C. Robinson, \emph{Infinite--dimensional dynamical systems}, Cambridge
  Texts in Applied Mathematics, Cambridge University Press, Cambridge, 2001.

\bibitem{Grasselli-Schimperna-2011}
Murizio Grasselli~Giulio Schimperna, \emph{Nonlocal phase-field systems with
  general potentials}, Discrete Contin. Dyn. Syst. \textbf{33} (2013),
  no.~11--12, 5089--5106.

\bibitem{Shomberg-n16}
Joseph~L. Shomberg, \emph{Well-posedness and global attractors for a
  non-isothermal viscous relaxation of nonlocal cahn-hilliard equations}, AIMS
  Mathematics: Nonlinear Evolution PDEs, Interfaces and Applications \textbf{1}
  (2016), no.~2, 102--136.

\bibitem{Temam88}
Roger Temam, \emph{Infinite-dimensional dynamical systems in mechanics and
  physics}, Applied Mathematical Sciences - Volume 68, Springer-Verlag, New
  York, 1988.

\bibitem{Temam01}
\bysame, \emph{{N}avier-{S}tokes equations - theory and numerical analysis},
  reprint ed., AMS Chelsea Publishing, Providence, 2001.

\bibitem{Zheng04}
Songmu Zheng, \emph{Nonlinear evolution equations}, Monographs and Surveys in
  Pure and Applied Mathematics - Volume 133, Chapman \& Hall/CRC, Boca Raton,
  2004.

\bibitem{Zheng&Milani05}
Songmu Zheng and Albert Milani, \emph{Global attractors for singular
  perturbations of the {C}ahn--{H}illiard equations}, J. Differential Equations
  \textbf{209} (2005), no.~1, 101--139.

\end{thebibliography}
\providecommand{\bysame}{\leavevmode\hbox to3em{\hrulefill}\thinspace}
\providecommand{\MR}{\relax\ifhmode\unskip\space\fi MR }
\providecommand{\MRhref}[2]{%
  \href{http://www.ams.org/mathscinet-getitem?mr=#1}{#2}
}
\providecommand{\href}[2]{#2}

\end{document}